\documentclass{article}
\usepackage[english]{babel}
\usepackage[latin1]{inputenc}
\usepackage{t1enc}
\usepackage{graphicx}
\usepackage{amssymb}
\usepackage{amsmath}
\usepackage{amsthm}
\usepackage{enumerate}
\usepackage{color,xcolor}
\newtheorem{theorem}{Theorem}[section]
\newtheorem{lemma}[theorem]{Lemma}
\newtheorem{corollary}[theorem]{Corollary}
\newtheorem{proposition}[theorem]{Proposition}
\theoremstyle{definition}
\newtheorem{remark}[theorem]{Remark}
\newtheorem{example}[theorem]{Example}
\newtheorem{examples}[theorem]{Examples}
\newtheorem{definition}[theorem]{Definition}
\numberwithin{equation}{section}

\usepackage{citeref}

\allowdisplaybreaks[1]

\numberwithin{equation}{section}

\newcommand{\nd}{\red{\ensuremath d}} %%% dimension variable in this file: d
\renewcommand{\nd}{{\ensuremath d}} %%% dimension variable in this file: d

\definecolor{verde}{rgb}{0.32,0.82,0.38}

\newcommand{\B}{\ensuremath{B^s_{p,q}}}

\newcommand{\dint}{\ensuremath{\mathrm{d}}}
\newcommand{\bit}{\begin{itemize}}
\newcommand{\eit}{\end{itemize}}
\newcommand{\beq}{\begin{equation}}
\newcommand{\eeq}{\end{equation}}
\newcommand{\supp}{\mathrm{supp}\,}

\newcommand{\id}{\mathrm{id}}
\newcommand{\hra}{\hookrightarrow}
\def\ls{\lesssim}
\newcommand{\Mf}{{\mathcal M}_{\varphi,p}}

\newcommand{\MfB}{{\mathcal N}^{s}_{\varphi,p,q}}
\newcommand{\MfF}{{\mathcal E}^{s}_{\varphi,p,q}}
\newcommand{\MFa}{{\mathcal E}^{s_1}_{u_1,p_1,q_1}}
\newcommand{\MFb}{{\mathcal E}^{s_2}_{u_2,p_2,q_2}}
\newcommand{\MAa}{{\mathcal A}^{s_1}_{u_1,p_1,q_1}}
\newcommand{\MAb}{{\mathcal A}^{s_2}_{u_2,p_2,q_2}}

\newcommand{\nat}{\ensuremath{\mathbb{N}}}
\newcommand{\no}{\ensuremath{\nat_0}}
\newcommand{\rr}{\ensuremath{{\mathbb R}}}
\newcommand{\real}{\ensuremath{{\mathbb R}}}
\newcommand{\rd}{\ensuremath{{\mathbb R}^\nd}}
\newcommand{\cc}{\ensuremath{{\mathbb C}}}
\newcommand{\zz}{\ensuremath{{\mathbb Z}}}
\newcommand{\zd}{\ensuremath{\mathbb{Z}^\nd}}

\newcommand{\M}{{\mathcal M}_{\varphi,p}}
\newcommand{\Ma}{{\mathcal M}_{\varphi_1,p_1}}
\newcommand{\Mb}{{\mathcal M}_{\varphi_2,p_2}}
\newcommand{\Mu}{{\mathcal M}_{u,p}}
\newcommand{\Me}{{\mathcal M}_{u_1,p_1}}
\newcommand{\Mz}{{\mathcal M}_{u_2,p_2}}
\newcommand{\MA}{{\mathcal A}^{s}_{\varphi,p,q}}
\newcommand{\MB}{{\mathcal N}^{s}_{\varphi,p,q}}
\newcommand{\MF}{{\mathcal E}^{s}_{\varphi,p,q}}
\newcommand{\MAu}{{\mathcal A}^{s}_{u,p,q}}
\newcommand{\MBu}{{\mathcal N}^{s}_{u,p,q}}
\newcommand{\MFu}{{\mathcal E}^{s}_{u,p,q}}
\newcommand{\MBa}{{\mathcal N}^{s_1}_{\varphi_1,p_1,q_1}}
\newcommand{\MBb}{{\mathcal N}^{s_2}_{\varphi_2,p_2,q_2}}

\newcommand{\Gp}{{\mathcal G}_p}
\newcommand{\Gpx}[1]{{\mathcal G}_{#1}}
\newcommand{\Bphi}{B^{s,\varphi}_{p,q}}
\newcommand{\Bphia}{B^{s_1,\varphi_1}_{p_1,q_1}}
\newcommand{\Bphib}{B^{s_2,\varphi_2}_{p_2,q_2}}
\newcommand{\Fphi}{F^{s,\varphi}_{p,q}}
\newcommand{\Fphia}{F^{s_1,\varphi_1}_{p_1,q_1}}
\newcommand{\Fphib}{F^{s_2,\varphi_2}_{p_2,q_2}}
\newcommand{\Aphi}{A^{s,\varphi}_{p,q}}

\newcommand{\n}{{n}^{s}_{\varphi,p,q}}
\newcommand{\na}{{n}^{s_1}_{\varphi_1,p_1,q_1}}
\newcommand{\nb}{{n}^{s_2}_{\varphi_2,p_2,q_2}}

\newcommand{\Bt}{B^{s,\tau}_{p,q}}
\newcommand{\Ft}{F^{s,\tau}_{p,q}}
\newcommand{\At}{A^{s,\tau}_{p,q}}
\newcommand{\Ata}{A^{s_1,\tau_1}_{p_1,q_1}}
\newcommand{\Atb}{A^{s_2,\tau_2}_{p_2,q_2}}
\newcommand{\Bta}{B^{s_1,\tau_1}_{p_1,q_1}}
\newcommand{\Btb}{B^{s_2,\tau_2}_{p_2,q_2}}
\newcommand{\bp}{b^{s,\varphi}_{p,q}}
\newcommand{\bpa}{b^{s_1,\varphi_1}_{p_1,q_1}}
\newcommand{\bpb}{b^{s_2,\varphi_2}_{p_2,q_2}}

\newcommand{\bmo}{\ensuremath \mathrm{bmo}}

\newcommand{\whole}[1]{\ensuremath \lfloor #1 \rfloor}
\newcommand{\up}[1]{\ensuremath  \lceil #1 \rceil}

\newcommand{\magenta}[1]{{\color{magenta}#1}}
\newcommand{\ignore}[1]{}
\newcommand{\ds}{\displaystyle}
\newcommand{\open}[1]{\smallskip\noindent\fbox{\parbox{\textwidth}{\color{blue}\bfseries\begin{center}
				#1 \end{center}}}\\ \smallskip}

\begin{document}

\title{Compact embeddings of generalised Morrey smoothness spaces on bounded domains}

\date{\today}
\author{Dorothee D. Haroske\footnotemark[1], Susana D. Moura\footnotemark[2]  and  Leszek Skrzypczak\footnotemark[1]}

\maketitle

%\ignore{
\footnotetext[1]{{Both authors were partially supported by the German Research Foundation (DFG), Grant No. Ha 2794/10-1.}}
\footnotetext[2]{{The author was partially supported by the Centre for Mathematics of the University of Coimbra under the Portuguese Foundation for Science and Technology (FCT), Grants UID/00324/2025 and UID/PRR/00324/2025.}}
%\footnotetext{ }
%}

\centerline{This article is dedicated to Prof. Hans Triebel on the Occasion of his 90th Birthday.}
\smallskip~

\begin{abstract} 
We study embeddings within different scales of  generalised  smoothness Morrey spaces defined on bounded smooth domains, i.e.,   in $\mathcal{N}^s_{\varphi,p,q}(\Omega)$, $\mathcal{E}^s_{\varphi,p,q}(\Omega)$, $B^{s,\varphi}_{p,q}(\Omega)$ and $F^{s,\varphi}_{p,q}(\Omega)$ spaces. 
	We prove  sufficient conditions for  continuity and compactness of the embeddings. In some cases the conditions are also necessary. We generalise and even improve some earlier results known for the classical smoothness Morrey spaces. Our approach is based on wavelet characterisation of the function spaces.    
\end{abstract}

\medskip

{\bfseries 2020 MSC}: \smallskip 46E35\\

{\bfseries Key words}: Generalised Morrey spaces, generalised Besov-Morrey spaces, generalised Besov-type space, generalised Triebel-Lizorkin-type space, generalised Triebel-Lizorkin-Morrey space, compact embeddings\\

\medskip\medskip

\ignore{
\open{some general questions:
  \begin{itemize}
  \item
    Please check all blue parts, in particular the introduction including its very end where I tried to express our gratitude both to Helena and Hans Triebel. Since all the papers should have the same dedication (as above), any personal remarks should be part of the text, preferably at the end of the introduction or at the very end (before the references). 
  \item
    Sometimes the number $q^\ast$ appears, which is reasonable, of course, for notation like $\ell_{q^\ast}$. But do we want to write in statements (theorems, etc.) rather `if $q_1\leq q_2$' instead of 'if $q^\ast=\infty$'? At the moment we have both, I would prefer (for the statements) the direct (first) form (and have changed it to $q_1\leq q_2$), but I am also fine with the other one. But I would unify it.
  \item
    We sometimes have $\lim_{t\to 0} \varphi(t)$, sometimes $\lim_{t\to 0^+} \varphi(t)$, both is fine with me, but we may unify. What do you prefer?
  \end{itemize}
}
}

\section{Introduction}

  It is well-known, that smoothness function spaces built upon Morrey spaces $\mathcal{M}_{u,p}(\rd)$, $0<p\leq u<\infty$, enjoy some renaissance in the last decades. This refreshed and deepened interest is surely due to the prominent paper by Kozono and Yamazaki \cite{KY} in which smoothness spaces of Besov-Morrey type $\MBu(\rd)$, $s\in\real$, $0<q\leq\infty$,  were used to study Navier-Stokes equations. This observation was followed by Mazzucato's findings \cite{Maz03}, related again to PDEs.  The Triebel-Lizorkin-Morrey spaces
 $\MFu(\rd)$ were later introduced by  Tang and Xu \cite{TX}.

  Another line of research followed the construction of Besov-type spaces $\Bt(\rd)$ and Triebel-Lizorkin-type spaces $\Ft(\rd)$,  $0<p<\infty$, $0<q\leq\infty$, $s\in\real$, $\tau\geq 0$, sharing the same idea of a local-global approach, but with different genesis. These spaces were introduced  and studied systematically in \cite{YSY10} with some forerunners in \cite{ElBaraka1,ElBaraka2, ElBaraka3}.
  Meanwhile there exists a variety of very nice results and a detailed systematic approach to such spaces which can be found, for instance, in the monographs \cite{YSY10,FHS-MS-1,FHS-MS-2,Tri13,Tri14} and the extended survey papers \cite{s011,s011a}. The applications of the results are widespread, cf. \cite{FP,Tri17,YFS}.
  
  In this paper we return to the functional analytic point of view and study generalised Morrey smoothness spaces built, partly, upon generalised Morrey spaces $\M(\rd)$, $0<p<\infty$, $\varphi:(0,\infty)\rightarrow [0,\infty)$. The generalised version of Morrey spaces $\mathcal{M}_{u,p}(\rd)$, $0 < p \le u < \infty$, {was} introduced by  Mizuhara \cite{mi} and Nakai \cite{Nak94}. However, though out of the scope of the present paper, an essential motivation to study such generalisations comes again from the study of PDEs, cf. \cite{FHS,KMR,WNTZ,ZJSZ,KNS}, see also \cite{Saw18} for further information and historical remarks. In a similar approach as for the scales of smoothness spaces of Besov-Morrey spaces $\MBu$ and Triebel-Lizorkin-Morrey spaces $\MFu$ mentioned above, this results in spaces of type $\MB$ and $\MF$. These spaces were introduced and studied by Nakamura, Noi and Sawano  \cite{NNS16}, cf. also \cite{AGNS}. Let us mention, that such generalised Morrey smoothness spaces cover, in particular, also local smoothness Morrey spaces  considered by Triebel \cite{Tri13,Tri14}. Note that in case of $\varphi(t)\sim t^{d/u}$, $0<p\leq u<\infty$, $t>0$, we regain the previous situation, that is, $\MBu(\rd)=\MB(\rd)$ and $\MFu(\rd)=\MF(\rd)$ (in the sense of equivalent norms).  In \cite{HL23,HLMS24} we followed the second approach and generalised spaces $\Bt$ and $\Ft$ to their counterparts $\Bphi$ and $\Fphi$. Here as well, the special setting $\varphi(t)\sim t^{d(\frac1p-\tau)}$, $0\leq \tau<\frac1p$, $t>0$, leads to the former spaces, that is, $\Bt(\rd)=\Bphi(\rd)$ and $\Ft(\rd)=\Fphi(\rd)$   (in the sense of equivalent norms).
    There and in our recent paper \cite{hms22} we obtained embeddings results and prepared further essential tools (extending \cite{NNS16}) like wavelet characterisations.

    In such situations, in the absence of weights or other additional assumptions like symmetries etc., no compactness can be expected from the embeddings within scales of spaces on $\rd$ (as we already know from the special cases). However, if we assume the underlying domain to be bounded and sufficiently smooth, then we can prove compactness.  There is a paper by Izuki and Noi dealing with 
    spaces on domains in \cite{IN19}, but it seems that spaces on domains in the setting of generalised Morrey smoothness scales, have not yet been studied in full generality. For the special case $\varphi(t)\sim t^{d/u}$, $t>0$, we discussed in \cite{HSS-morrey} in some detail already different approaches, various intrinsic characterisations, definitions by restrictions etc. We do not want to repeat this here, but restrict ourselves to the definition by restriction. Our main focus lies on the precise description of the interplay between smoothness parameters and regularity, characterised mainly by $s\in\real$, $0<p<\infty$, and the function $\varphi$ (and to a smaller extent by the fine index $q$ with $0<q\leq\infty$). Looking at our results for the special cases dealt with in \cite{GHS-21,GHS-23,HSS-morrey,hs13,hs14,hs20,hs24} we mainly face the challenge how to `translate' or generalise the technically quite involved conditions for, say, compactness as presented in Corollary~\ref{comp-class} below. How can the characterisations like \eqref{bd3acomp} and \eqref{tau-comp-u2} be expressed in terms of the functions $\varphi_i$, $i=1,2$? The answer is positive and quite interesting in our opinion. The clue in our argument is the introduction of some critical smoothness indices $\sigma$, $\overline{\sigma}$ and $\sigma_\infty$ in Definition~3.16 below. Roughly speaking, these numbers depend on the behaviour of the parameters $p_i\in (0,\infty)$ and the functions $\varphi_i$, $i=1,2$, for a given number $s_1\in\real$. In case of $\varphi_i(t)\sim t^{d(\frac{1}{p_i}-\tau_i)}$, $0\leq \tau_i<1/p_i$, $i=1,2$, and, say, $p_1\leq p_2$, this leads to $\sigma=\sigma(s_1)=s_1-\frac{d}{p_1}+\frac{d}{p_2}+d(\tau_1-\frac{p_1}{p_2}\tau_1)$, and, for $\tau_2 p_2\geq \tau_1 p_1$, to $\overline{\sigma} = \overline{\sigma}(s_1) = s_1-d(\frac{1}{p_1}-\tau_1-\frac{1}{p_2}+\tau_2)$,  which is closely connected to the condition for $s_2$ in \eqref{tau-comp-u2}. And, indeed, we can prove in Theorem~\ref{cor-5.1} below that the embedding $\Bphia(\Omega)\hookrightarrow \Bphib(\Omega)$ is compact if $s_2<\sigma(s_1)$ in case of $p_1<p_2$ and $\varphi_2^{p_2}(t) \leq c\ \varphi_1^{p_1}(t)$, $0<t\leq 1$, while for $\varphi_2^{p_2}(t) \geq c\ \varphi_1^{p_1}(t)$, $0<t\leq 1$, and arbitrary $p_1, p_2$, we need $s_2<\overline{\sigma}(s_1)$ to ensure compactness. For the continuity of that embedding we obtain in case of $p_1\geq p_2$ a complete characterisation in Theorem~\ref{P-Bp-cont} below.
    Returning to the setting of generalised Besov-Morrey spaces on domains, $\MB(\Omega)$, we have a final outcome in Theorem~\ref{th-cont-BM} as follows. If we denote by $\varrho=\min(1, p_1/p_2)$, and $\alpha_j=\sup_{0\leq \nu\leq j}\varphi_2(2^{-\nu}) \varphi_1(2^{-\nu})^{-\varrho}$, $j\in\no$, then
    \[
\MBa(\Omega)\hookrightarrow \MBb(\Omega),    \]
if, and only if,
\[
\{2^{j(s_2-s_1)} \alpha_j \varphi_1(2^{-j})^{\varrho-1}\}_{j\in\no}\in\ell_{q^\ast},
\]
where $q^\ast = \infty$ if $q_1\leq q_2$, and $\frac{1}{q^\ast}= \frac{1}{q_2}-\frac{1}{q_1}$ if $q_1>q_2$. The above embedding is compact, if, and only if, it is continuous in case of $q_1> q_2$, while in case of $q_1\leq q_2$ we need to assume that $2^{j(s_2-s_1)} \alpha_j \varphi_1(2^{-j})^{\varrho-1} \to 0$ for $j\to\infty$. Surprisingly, this new result in the generalised setting even improves our earlier result in \cite{hs13}
to some extent. We can prove a lot more continuity and compactness criteria for embeddings of generalised Morrey smoothness spaces on domains which are, as far as we know, all new in the general setting. The main tool we use is, from the technical point of view, the transfer of the function space question to the equivalent one on related sequence spaces. This is done via a wavelet decomposition of the smoothness spaces defined on $\rd$. For that reason we concentrate in the present paper on spaces on domains defined by restriction only and assume the domain, for convenience, to be sufficiently smooth. Surely it would be quite interesting (but also rather challenging) to consider wavelet characterisations on the domain -- as was done in detail in \cite{Tri08} for Besov and Triebel-Lizorkin spaces. But this might be done later elsewhere.

Following the above briefly sketched path to deal with related sequence spaces, we obtain a number of final or almost final results describing the continuity or compactness of their embeddings. This is not only a necessary preparation for our subsequent study of generalised Morrey smoothness function spaces, but of independent interest. \\

The paper is organised as follows. In Section~\ref{sect-2} we briefly collect general notation and some basic facts for the generalised Morrey smoothness spaces on $\rd$. 
In Section~\ref{sect-seq} we prepare the ground for the intended characterisations of the compactness or continuity of embeddings of the above function spaces on domains. In Theorems~\ref{th-cont} and \ref{th:comp} we investigate the embedding $\na(Q)\hookrightarrow \nb(Q)$, where $Q\subset\rd$ is an arbitrary dyadic cube, sufficiently large to contain $\Omega$, and $\n(Q)$  are the sequence spaces associated to $\MB(\Omega)$. We collect a number of examples and special settings to illustrate the outcome. To deal with the sequence spaces $\bp(Q)$, which denote the appropriate discretisations for the spaces $\Bphi(\Omega)$, we need to define the indices $\sigma$, $\sigma_\infty$ and $\overline{\sigma}$ mentioned above. This can be found in Definition~\ref{indices}, complemented by some short discussion and examples for those numbers. The essential compactness result is presented in Proposition~\ref{Lemma-LS_1+2}.

In Section~\ref{sect-MB} we present our findings concerning the embedding $\MBa(\Omega)\hookrightarrow \MBb(\Omega)$, mainly in Theorem~\ref{th-cont-BM}. Again we exemplify this general result in certain typical situations for further use. This covers, say, embeddings with (usual) Besov spaces $\B(\Omega)$ as target or source spaces, respectively, or even spaces of type $L_r(\Omega)$.

Finally, in the concluding Section~\ref{sect-MB}, we consider the remaining generalised Morrey smoothness spaces $\Bphi(\Omega)$, $\Fphi(\Omega)$ and $\MF(\Omega)$, including thus also the space $\bmo(\Omega)$, and, finally $\M(\Omega)$.\\

At this point we would like to add special thanks, first to our former colleague Helena F. Gon\c{c}alves who was very much involved in the first discussions about the subject of this paper. We owe her a lot of inspiration and very much regret, that for some personal reasons she decided to leave academics some time ago.\\

Moreover, all the three authors are extremely grateful for the long-time mentorship we enjoyed by Professor Hans Triebel. Each one of us benefited from the close scientific contact with him very much, though at different length and extent (he was, for instance, the PhD supervisor of the first two authors). We always  take a lot of inspiration working with him and trying to understand his special scientific insights, see the remarkable interview \cite{CaH-Tri} taken some years ago. In addition, we always enjoy his special sense of humour and witty remarks. We think, that in addition to his unquestionable merits as an outstanding scientist he is also a great academic teacher. And a very kind and modest person, too. It is always our pleasure to meet him and discuss some new ideas with him. Happy birthday, Hans Triebel!

%\open{missing, add special thanks to Helena in the end}

{%\color{verde}
\section{Preliminaries}\label{sect-2}

First we fix some notation. By $\nat$ we denote the \emph{set of natural numbers},
by $\no$ the set $\nat \cup \{0\}$,  and by $\zd$ the \emph{set of all lattice points
in $\rd$ having integer components}. Let $\no^\nd$, where $\nd\in\nat$, be the set of all multi-indices, $\alpha = (\alpha_1, \ldots,\alpha_\nd)$ with $\alpha_j\in\no$ and $|\alpha| := \sum_{j=1}^\nd \alpha_j$. If $x=(x_1,\ldots,x_\nd)\in\rd$ and $\alpha = (\alpha_1, \ldots,\alpha_\nd)\in\no^\nd$, then we put $x^\alpha := x_1^{\alpha_1} \cdots x_\nd^{\alpha_\nd}$. 
For $a\in\rr$, let   $\whole{a}:=\max\{k\in\zz: k\leq a\}$, $\up{a} = \min\{k\in \zz:\; k\ge a \}$, and $a_{+}:=\max(a,0)$. 
Given any $u\in (0,\infty]$, it will be denoted by $u'$ the number, possibly $\infty$, defined by the expression  $\frac{1}{u'}=(1-\frac{1}{u})_+$; in particular, when $1\leq u\leq \infty$, $u'$ is the same as the conjugate exponent defined through $\frac{1}{u}+ \frac{1}{u'}=1$.
All unimportant positive constants will be denoted by $C$, % occasionally with subscripts.
occasionally the same letter $C$ is used to denote different constants  in the same chain of inequalities.
 By the notation $A \ls B$, we mean that there exists a positive constant $c$ such that
 $A \le c \,B$, whereas  the symbol $A \sim B$ stands for $A \ls B \ls A$.
 We denote by
 $|\cdot|$ the Lebesgue measure when applied to measurable subsets of $\rd$.
For each cube $Q\subset \rd $ we denote  its side length by $ \ell(Q)$, and, for $a\in (0,\infty)$, we denote by $aQ$ the cube concentric with $Q$ having the side length $a\ell(Q)$. For $x\in\rd$ and $r \in (0, \infty)$  we denote by $Q(x, r)$ the compact cube centred at $x$ with side length $r$, whose sides are parallel to the axes of coordinates. We write simply $Q(r)=Q(0,r)$ when $x=0$.
By $\mathcal{Q}$ we denote the collection of all \emph{dyadic cubes} in $\rd$, namely, $\mathcal{Q}:= \{Q_{j,k}:= 2^{-j}([0,1)^\nd+k):\ j\in\zz,\ k\in\zd\}$.  For all $Q\in\mathcal{Q}$, let $j_Q:=-\log_2\ell(Q)$, and let $j_Q\vee 0:=\max(j_Q,\,0)$.
Given two (quasi-)Banach spaces $X$ and $Y$, we write $X\hookrightarrow Y$
if $X\subset Y$ and the natural embedding of $X$ into $Y$ is continuous.

\begin{definition} \label{def-gen-Morrey}
	Let $0<p<\infty$ and $\varphi:(0,\infty)\rightarrow [0,\infty)$ be a function which does not satisfy $\varphi\equiv 0$. Then $\M(\rd)$ is the set of all 
	locally $p$-integrable functions $f\in L_p^{\mathrm{loc}}(\rd)$ for which 
	\begin{equation*} %\label{Morrey-norm_0}
		{\|f\mid \M(\rd)\|_\star:=\sup_{x\in\rd, r>0} \varphi(r) %\bigl(\ell(Q(x,r))\bigr)
			\biggl(\frac{1}{|Q(x,r)|}\int_{Q(x,r)} |f(y)|^p \dint y\biggr)^{\frac{1}{p}} \, <\, \infty\, . }
		%\|f\mid \M(\rd)\|:=\sup_{Q\in\mathcal{Q}}\varphi %\bigl(\ell(Q)\bigr)\biggl(\frac{1}{|Q|}\int_Q |f(y)|^p \dint %y\biggr)^{\frac{1}{p}} \, <\, \infty\, .
	\end{equation*}
\end{definition}

\begin{remark} \label{rmk1}
	The above definition goes back to \cite{Nak94}.
	When $\varphi(t)=t^{\frac{\nd}{u}}$ for $t>0$ and $0<p\leq u<\infty$, then $\M(\rd)$ coincides with ${\mathcal M}_{u,p}(\rd)$, which in turn recovers the Lebesgue space $L_p(\rd)$ when $u=p$. 
	%%
	%In the definition of $\|\cdot\mid \M(\rd)\|$ balls or cubes with %sides parallel to the axes of coordinates can be taken. This  change %leads to  equivalent quasi-norms. 
	Note that for $\varphi_0\equiv 1$ (which would correspond to $u=\infty$) we obtain %as in that case
	\begin{equation}\label{M1p}
		{\mathcal M}_{\varphi_0,p}(\rd) = L_\infty(\rd),\quad 0<p<\infty,\quad \varphi_0\equiv 1,
	\end{equation}
	due to Lebesgue's differentiation theorem.  
	
	When $\varphi(t)=t^{-\sigma} \chi_{(0,1)}(t)$ where  $-\frac{\nd}{p}\leq \sigma <0$, then $\M(\rd)$ coincides with the local Morrey spaces $\mathcal{L}^{\sigma}_p(\rd)$ introduced by Triebel in \cite{Tri11}, cf. also \cite[Section~1.3.4]{Tri13}. If $ \sigma=-\frac{\nd}{p}$, then the space is a uniform Lebesgue space $\mathcal{L}_p(\rd)$.
\end{remark}

\bigskip

A natural assumption for $\Mf(\rd)$ is that $\varphi$ belongs to the class $\Gp$, where $\Gp$ is the set of all non-decreasing functions $\varphi:(0,\,\infty)\rightarrow(0,\,\infty)$ such that 
\begin{align}\label{Gp-def}
	t^{-\frac{d}{p}}\varphi(t)\geq s^{-\frac{d}{p}}\varphi(s),
\end{align}
for all $0<t\leq s<\infty$.

\begin{remark}
	A justification for the use of the class $\Gp$ comes from \cite[Lemma~2.2]{NNS16}, see also \cite{Nak00}.  More precisely, for $p$ and $\varphi$ as in Definition~\ref{def-gen-Morrey}, it holds $\mathcal{M}_{\varphi,p}(\rd)\neq \{0\}$ if, and only if, $\displaystyle\sup_{t>0} \varphi(t)  \min (t^{-\frac{\nd}{p}},1) < \infty$. 
	Moreover, if $\displaystyle\sup_{t>0} \varphi(t)  \min (t^{-\frac{d}{p}},1) < \infty$, then there exists $\varphi^*\in\Gp$ such that $\mathcal{M}_{\varphi,p}(\rd)  = {\mathcal M}_{\varphi^*,p}(\rd) $ in the sense of equivalent (quasi-)norms. We refer to \cite{FHS-MS-2} for the proofs. 
	
	One can easily check that $\mathcal{G}_{p_1}\subset \mathcal{G}_{p_2}$ if $0<p_2\le p_1<\infty$.  We refer the reader to \cite[Section~12.1.2]{FHS-MS-2} for more details about the class $\Gp$. 
	
	Note that $\varphi\in\Gp$ enjoys a doubling property, i.e., $\varphi(r)\leq \varphi(2r)\leq 2^\frac{d}{p}\varphi(r)$, $0<r<\infty$, {therefore} we can define an equivalent quasi-norm in $\Mf(\rd)$ by taking the supremum over the collection $\mathcal{Q}$ of all dyadic cubes, namely,
	\begin{align}\label{Mf-norm}
		\|f \mid \Mf(\rd)\| :=\sup_{P\in \mathcal{Q}} \varphi(\ell(P))
		\left(\frac{1}{|P|}\int_{P} |f(y)|^p \dint y \right)^{\frac{1}{p}}.
	\end{align}
	In the sequel, we always assume that $\varphi\in\Gp$ and consider the quasi-norm \eqref{Mf-norm}.   This choice is natural and covers all interesting cases. {Moreover, we shall usually assume in the sequel that $\varphi(1)=1$.}
\end{remark}

Here we illustrate some examples, partly from \cite{FHS-MS-2}.

\begin{examples}\label{ex-phi}
	\begin{enumerate}[\bfseries\upshape  (i)]
				\item Let $u\in\real$, $0<p<\infty$ and let $\varphi(t)=t^u$ for $t>0$. Then $\varphi$ belongs to $\Gp$ if, and only if, $0\leq u\leq \frac{d}{p}$.
		\item Let $0<u,\,v<\infty$. Then
		\begin{equation}\label{ex-u-v}
			\varphi_{u,v}(t)=\begin{cases}
				t^{\frac{\nd}{u}},\quad{\text{if}}\quad t\leq 1\\
				t^{\frac{\nd}{v}},\quad{\text{if}}\quad t> 1
			\end{cases}
		\end{equation}
		belongs to $\Gp$ with $p=\min(u,v)$. 
		In particular, the function $\varphi(t)=t^{\frac{\nd}{u}}$ belongs to $\Gp$ whenever $0<p\leq u<\infty$ by taking $u=v$. Moreover, the function $\varphi(t)=\max(1, t^{\nd/v})$ belongs to $\Gpx{v}$ (corresponding to $u=\infty$), while $\varphi(t)=\min(t^{\nd/u}, 1)$ belongs to $\Gpx{u}$ (corresponding to $v=\infty$). 
		\item Let $0<p<\infty$, $a\leq 0$ and {let $L$  be a sufficiently large constant. Then} $\varphi(t)=t^{\frac{d}{p}}(\log(L+t))^a$ for $t>0$ 
		belongs to $\Gp$.
		\item Let $0<p<\infty$,  {let $u$ be a sufficiently small positive  constant and  }
		$\varphi(t)=t^u(\log(e+t))^{-1}$ for $t>0$. Then $\varphi\notin\Gp$.
		\item Let
		\begin{equation}\label{ex-log}
			\varphi(t)=\begin{cases} \frac{1}{\log 2} \log(1+t), & 0<t<1, \\ t, & t\geq 1.\end{cases}
		\end{equation}
		Then $\varphi \in \mathcal{G}_{\nd}$.
		\item	Let $a \ge e$. Then
	\begin{equation}\label{exp1}
		\varphi(t)= \begin{cases}
			(\ln a)\,  (\ln t^{-1})^{-1} & \text{if} \; 0<t <a^{-1},\\
			1  & \text{if} \; t \ge a^{-1},
		\end{cases}
	\end{equation}
	belongs to  $\mathcal{G}_r$ with $r=d\ln a$. 
	\item Let $0<p \le d$. Then
	\begin{equation}\label{ex-psi}
	\psi(t)=\begin{cases}
	(e\,t)^\frac{d}{p}\ln(t^{-1})&\; \text{if} \quad 0<t<e^{-1},\\
	1& \;\text{if}\quad t\ge e^{-1},
\end{cases}
\end{equation}
belongs to $\mathcal{G}_p$. 
	\end{enumerate}
\end{examples}

Other examples can be found e.g. in \cite[Example~3.15]{Saw18}.
\medskip

Now we introduce the generalised Morrey smoothness spaces.
%\bigskip
Let $\mathcal{S}(\rd)$ be the set of all Schwartz functions on $\rd$, endowed
with the usual topology,
and denote by $\mathcal{S}'(\rd)$ its topological dual, namely,
the space of all bounded linear functionals on $\mathcal{S}(\rd)$
endowed with the { weak$^\ast$ topology}.
For all $f\in \mathcal{S}(\rd)$ or $f\in\mathcal{S}'(\rd)$, we
use $\mathcal{F} f $ to denote its Fourier transform, and $\mathcal{F}^{-1}f$ for its inverse.
Now let us define the generalised Besov-Morrey spaces introduced in \cite{NNS16}.

Let $\eta_0,\eta\in \mathcal{S}(\rd)$ be nonnegative compactly supported functions satisfying
\begin{equation*}
\eta_0(x)>0 \quad \text{if}\quad x \in Q(2),
\end{equation*}
\begin{equation*}
0\notin \supp \eta\quad \text{and} \quad \eta(x)>0 \quad \text{if}\quad x \in Q(2) \setminus Q(1).
\end{equation*}
For $j\in\nat$, let $\eta_j(x):=\eta(2^{-j}x)$,  $x\in\rd$.

\begin{definition} \label{def-spaces} 
Let $0<p<\infty$, $0<q\leq \infty$, $s\in \rr$, and $\varphi\in \Gp$. 
  \begin{enumerate}[\bfseries\upshape  (i)]
 \item The  generalised Besov-Morrey   space
  $\MB(\rd)$ is defined to be the set of all  $f\in\mathcal{S}'(\rd)$ such that
\begin{align*}
\big\|f\mid \MB(\rd)\big\|:=
\bigg(\sum_{j=0}^{\infty}2^{jsq}\big\| \mathcal{F}^{-1}(\eta_j  \mathcal{F} f)\mid
\M(\rd)\big\|^q \bigg)^{1/q} < \infty,
\end{align*}
with the usual modification if $q=\infty$.
\item Assume that there exists $C,\,\varepsilon>0$ such that 
        \begin{align}\label{intc}
            \frac{t^\varepsilon}{\varphi(t)}\leq \frac{C r^\varepsilon}{\varphi(r)}
            \quad\text{holds for}\quad t\geq r,
        \end{align}
        when $0<q<\infty$. 
The generalised Triebel-Lizorkin-Morrey space $\MF(\rd)$ is defined to be the set of all $f\in\mathcal{S}'(\rd)$ such that
         \begin{align*}
            \|f\mid \MF(\rd)\|:=\left\|\left(\sum_{j=0}^\infty 2^{j s q} |\mathcal{F}^{-1}(\eta_j  \mathcal{F} f)(\cdot)|^q\right)^\frac{1}{q}\mid\M(\rd)\right\|<\infty
        \end{align*}
        with the usual modification for $q=\infty$.
    \item The space $\mathcal{A}_{\varphi,p,q}^s(\rd)$ denotes either $\MB(\rd)$ or $\MF(\rd)$, assuming  that $\varphi$ satisfies \eqref{intc} when $q<\infty$ and $\mathcal{A}_{\varphi,p,q}^s(\rd)=\MF(\rd)$.
 \item The generalised Besov-type space ${B}_{p,q}^{s,{\varphi}}(\rd)$ is defined to be the set of all $f\in\mathcal{S}'(\rd)$ such that
 \begin{align*}
            \|f\mid {B}_{p,q}^{s,{\varphi}}(\rd)\|:=\sup_{P\in\mathcal{Q}} {\frac{\varphi(\ell(P))}{|P|^\frac{1}{p}}}
            \left[\sum_{j=j_P\vee 0}^\infty 2^{j s q}\left(\int_P
            | \mathcal{F}^{-1}(\eta_j  \mathcal{F} f)(x)|^p \dint x\right)^\frac{q}{p}\right]^\frac{1}{q}<\infty
\end{align*}
 with the usual modification for $q=\infty$.
\item
  Assume that $\varphi$ satisfies \eqref{intc} when $q<\infty$. 
        The generalised Triebel-Lizorkin-type space ${F}_{p,q}^{s,{\varphi}}(\rd)$ is defined to be the set of all $f\in\mathcal{S}'(\rd)$ such that 
        \begin{align*}
            \|f\mid {F}_{p,q}^{s,{\varphi}}(\rd)\|:=\sup_{P\in\mathcal{Q}} {\frac{\varphi(\ell(P))}{|P|^\frac{1}{p}}}
            \left[\int_P\left(\sum_{j=j_P\vee 0}^\infty 2^{j s q}
            |\mathcal{F}^{-1}(\eta_j  \mathcal{F} f)(x)|^q\right)^\frac{p}{q}\dint x\right]^\frac{1}{p}<\infty
        \end{align*}
        with the usual modification for $q=\infty$.
        \item The space ${A}_{p,q}^{s,\varphi}(\rd)$ denotes either $\Bphi(\rd)$ or $\Fphi(\rd)${, assuming} that $\varphi$ satisfies \eqref{intc} when $q<\infty$ and $\Aphi(\rd)=\Fphi(\rd)$.
        \end{enumerate}
\end{definition}
%%%%%%%%%%%%%%%%%%%%%%%%%%%%%%%%%%%

\begin{remark}\label{rem-coinc}
 The generalised Besov-Morrey  spaces and the generalised Triebel-Lizorkin-Morrey spaces have been introduced in \cite{NNS16} {while} the generalised Besov-type spaces  and the generalised Triebel-Lizorkin-type spaces have been introduced in \cite{HL23}.
  When $\varphi(t)=t^{\frac{\nd}{p}}$ for $t>0$, then %both spaces coincide with 
  the  classical Besov spaces $B^s_{p,q}(\rd)$ and the classical Triebel-Lizorkin spaces $F^s_{p,q}(\rd)$, 
for any $0<p<\infty$, $0<q\leq \infty$, and $s\in \rr$, {are recovered from both scales,} since 
$$
B^s_{p,q}(\rd)={\mathcal N}^s_{{\varphi},p,q}(\rd)= {B}_{p,q}^{s,{\varphi}}(\rd) \quad \text{and} \quad  F^s_{p,q}(\rd)={\mathcal E}^s_{{\varphi},p,q}(\rd)= {F}_{p,q}^{s,{\varphi}}(\rd). 
$$
{See, for example, \cite{T83,T92,T06,T20} for more details about these spaces.}

 When $\varphi(t)=t^{\frac{\nd}{u}}$ for $t>0$ and $0<p\leq u<\infty$, then 
$$
\MB(\rd)={\mathcal N}^s_{u,p,q}(\rd)  { \quad \text{and} \quad  \MF(\rd)={\mathcal E}^s_{u,p,q}(\rd)  }
$$
are, respectively, the usual Besov-Morrey spaces and the usual Triebel-Lizorkin-Morrey spaces which are studied in \cite{YSY10} or in the survey papers by Sickel \cite{s011,s011a}.

% When $\varphi(t)=t^\frac{d}{p}$ for $t>0$, then $B_{p,q}^{s,\varphi}(\rd)$ and $F_{p,q}^{s,\varphi}(\rd)$ coincide with the classical Besov space $B_{p,q}^s(\rd)$ and {the} Triebel-Lizorkin space $F_{p,q}^s(\rd)$, respectively;  see, for example, \cite{T83,T92,T06,T20} for more details about these spaces.

When  $\varphi(t)=\min(t^\frac{\nd}{u},1)$, then we recover  the local Besov-Morrey spaces introduced by Triebel, 
\[ \MB(\rd) = B^s_q(\mathcal{L}^\sigma_p, \rd), \quad\sigma= -\frac{\nd}{u}, \quad p\le u, \] 
cf. \cite[Section~1.3.4]{Tri13}.

The spaces ${B}_{p,q}^{s,{\varphi}}(\rd)$ and ${F}_{p,q}^{s,{\varphi}}(\rd)$ are equivalent, respectively, to the spaces ${B}_{p,q}^{s,{\tilde{\varphi}}}(\rd)$ and ${F}_{p,q}^{s,{\tilde{\varphi}}}(\rd)$ introduced in \cite{HL23}, if one replaces $\varphi(\ell(P))^{-1}|P|^\frac{1}{p}$ by a function $\tilde{\varphi}(\ell(P))$ in the above definition, where $\tilde{\varphi}\in\Gp$ and satisfies $t^{\varepsilon-\frac{d}{p}}\tilde{\varphi}(t)\leq C r^{\varepsilon-\frac{d}{p}}\tilde{\varphi}(r)$ for $t\geq r$ when $q<\infty$ in case of the $F$-spaces. 

If {$\varphi(t)=t^{d(\frac{1}{p}-\tau)}$ for $t>0$ and $\tau\in[ 0,\frac{1}{p})$, then $B_{p,q}^{s,\varphi}(\rd)$ and $F_{p,q}^{s,\varphi}(\rd)$} coincide with, respectively,  the Besov-type space $B_{p,q}^{s,\tau}(\rd)$ and the Triebel-Lizorkin-type space $F_{p,q}^{s,\tau}(\rd)$ introduced in \cite{YSY10}. 

%Of course, in both scales we can recover the  classical Besov spaces $B^s_{p,q}(\rd)$ and Triebel-Lizorkin spaces $F^s_{p,q}(\rd)$ for any $0<p<\infty$, $0<q\leq \infty$, and $s\in \rr$, since 
%$$
%A^s_{p,q}(\rd)={\mathcal A}^s_{p,p,q}(\rd) = A_{p,q}^{s,0}(\rd) .
%$$
\end{remark}

Let us briefly recall some embeddings that we shall use below. We always assume that $s\in\rr$, $0<p<\infty$, $0<q\leq\infty$, $\varphi\in\Gp$, and  $\varphi$ satisfies \eqref{intc} when $q<\infty$ and $\Aphi(\rd)=\Fphi(\rd)$ or $\MA(\rd)=\MF(\rd)$, respectively. Then,
    \begin{align*}
        \mathcal{A}_{\varphi,p,q_1}^s(\rd)\hookrightarrow\mathcal{A}_{\varphi,p,q_2}^s(\rd),\qquad q_1\leq q_2,
    \end{align*}
and 
    \begin{align*}%\label{asea}
        \mathcal{A}_{\varphi,p,q_1}^{s+\varepsilon}(\rd)\hookrightarrow\mathcal{A}_{\varphi,p,q_2}^s(\rd),\qquad \varepsilon>0,
    \end{align*}
    see also \cite[Proposition~3.3]{NNS16} for more details. In both cases we have obvious counterparts within the scale $\Aphi(\rd)$.

As stated in  \cite[Theorem~4.1]{HLMS24}, {one has}
%Let $s\in\rr$, $0<p<\infty$, $0<q\leq\infty$, $\varphi\in\Gp$. If $\varphi$ satisfies
\eqref{intc} when $q<\infty$, then 
\begin{equation} \label{E=F}
\Fphi(\rd)=\MF(\rd)
\end{equation}
with equivalent norms. Moreover, 
\begin{equation} \label{emb-B-N}
 \MfB(\rd) \hookrightarrow B_{p,q}^{s, \varphi}(\rd) \quad \text{with} \quad \mathcal{N}^s_{\varphi, p, \infty}(\rd) = B_{p,\infty}^{s, \varphi}(\rd).
\end{equation}
Note that, when $q<\infty$ and 
        \begin{equation}\label{ls1}
        \lim_{t\rightarrow 0} \varphi(t)t^{-\frac{d}{p}}=\infty \quad \text{or} \quad  \lim_{t\rightarrow +\infty} \varphi(t)t^{-\frac{d}{p}}=0,
        \end{equation}
        then $\MB(\rd)$ is a proper subspace of $B_{p,q}^{s, \varphi}(\rd)$.
        
Similar to the classical case one has
        \begin{equation}\label{elem-gen}
B^{s,\varphi}_{p,\min(p,q)}(\rd)\, \hookrightarrow \, \Fphi(\rd)\, \hookrightarrow \, B^{s,\varphi}_{p,\max(p,q)}(\rd),
\end{equation}
while for the spaces $\MA(\rd)$ we could prove in \cite[Theorem~4.3]{HLMS24} that
    \begin{align}\label{Nphi-Ephi'}
      \mathcal{N}_{\varphi,p,\min(p,q)}^s(\rd)\hookrightarrow\MF(\rd).
    \end{align}
Furthermore, if
    \begin{align}\label{een-1'}
      \lim_{t\rightarrow 0} t^{-\frac{d}{p}}\varphi(t)<\infty\quad\text{and}\quad
      \lim_{t\rightarrow +\infty}t^{-\frac{d}{p}}\varphi(t)>0,
      \end{align}
then
      \begin{align}\label{een''}
    \MfF(\rd)\hookrightarrow\mathcal{N}_{\varphi,p,q_0}^s(\rd)
\end{align}
if, and only if, $q_0\geq  \max(p,q)$. 
Otherwise, if \eqref{een-1'} is not satisfied, then \eqref{een''} holds if, and only if, $q_0=\infty$. We refer to \cite{saw08} for preceding results.\\

\ignore{Moreover, let $0<p<u<\infty$, then according to \cite[Proposition~1.3]{saw08}, we have a sharp embedding
    \begin{align}\label{se-au}
        \mathcal{N}_{u,p,\min(p,q)}^s(\rd)\hookrightarrow\MF(\rd)\hookrightarrow\mathcal{N}_{u,p,\infty}^s(\rd).
    \end{align}
    The index $\infty$ on the right-hand side of \eqref{se-au} cannot be replaced by any finite number; 
    see \cite[Proposition~1.6]{saw08}.}

%\open{introduce $\MF$ and $\Fphi$ and their relation to each other, basic embedding results from $\rd$ (with references), notation $\MA$ and $\Aphi$}

Let us mention that
  \begin{equation}\label{E=Mf}
    \mathcal{E}^{0}_{\varphi,p,2} (\rd)=\Mf(\rd), \quad \varphi\in\Gp, \quad 1<p<\infty,
  \end{equation}
  {if} \eqref{intc} holds, cf. \cite{NNS16} with reference to \cite[Proposition~5.1]{SHG-15}, see also \cite[Proposition~3.18]{YZY-BJMA15} in a more general context.  
    This generalises the Mazzucato result \cite[Proposition~4.1]{Maz03} related to the case $\varphi(t)=t^\frac{d}{u}$ for $t>0$, which reads as $\mathcal{E}^0_{u,p,2}(\rd)=\Mu(\rd)$, $1<p\leq u<\infty$.
%In particular, this covers the well-known classical outcome that 
%\begin{equation}\label{E-Lp}
%\mathcal{E}^0_{p,p,2}(\rd)=L_p(\rd)=F^0_{p,2}(\rd),\quad 1<p<\infty,
%\end{equation}
%always understood in the sense of equivalent norms.
\bigskip~

Finally, let us introduce spaces on domains. By  $\Omega$ we will always denote  a bounded $C^{\infty}$ domain in $\rd$. All spaces $\MA(\Omega)$ and $\Aphi(\Omega)$ are defined by restriction. In special cases, like $\varphi(t)=t^{d/u}$, $0<p\leq u<\infty$, or $\varphi(t)=t^{d(\frac1p-\tau)}$, $0<p<\infty$, $0\leq \tau<\frac1p$, we have dealt with continuous and compact (even nuclear) embeddings of spaces of the above type in our recent papers \cite{GHS-21,GHS-23,HSS-morrey,hs13,hs14,hs20,HMSS-morrey,hs24}. When appropriate we shall refer to those results in detail below. As an example let us recall some  compactness result for spaces of type $\MAu(\Omega)$, see also \cite[Theorem~3.1]{GHS-21}, and for spaces $\At(\Omega)$ as obtained in \cite[Theorem~3.2]{GHS-21}. In view of the comparison with the spaces $\MA$ studied here, recall Remark~\ref{rem-coinc}, we present our earlier results for $u_i<\infty$, $p_i<\infty$, $0\leq \tau_i< \frac{1}{p_i}$, $i=1,2$, only.

\begin{corollary}[\cite{GHS-21}]  \label{comp-class}
  Let  $s_i\in \real$, $0<q_i\leq\infty$, $i=1,2$. 
  \begin{enumerate}[{\bfseries\upshape(i)}]
    \item
Assume  $0<p_i\leq u_i<\infty$, $i=1,2$. Then  the embedding
\begin{equation} \label{bd1comp}
	 \id_{\mathcal{A}}: \MAa(\Omega )\hookrightarrow \MAb(\Omega )
\end{equation}
is compact if, and only if,
the following condition holds:
\begin{equation}\label{bd3acomp}
\frac{s_1-s_2}{d} > \max\bigg\{0,\frac{1}{u_1} - \frac{1}{u_2},
\frac{1}{u_1} \Big(1- \frac{p_1}{p_2}\Big) \bigg\}.
\end{equation}
\item
Let $0\leq \tau_i< \frac{1}{p_i}$, $i=1,2$.    The embedding
\begin{equation} \label{tau-comp-u1}
	 \id_{\tau}: \Ata(\Omega )\hookrightarrow \Atb(\Omega )
\end{equation}
is compact if, and only if,
the following condition holds:
\begin{equation}\label{tau-comp-u2}
  \frac{s_1-s_2}{d} > \left(\frac{1}{p_1}-\tau_1 -\frac{1}{p_2}+\max\{\tau_2, \frac{p_1}{p_2}\tau_1\}\right)_+.
\end{equation}
\end{enumerate}
\end{corollary}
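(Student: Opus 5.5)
The plan is to reduce both statements to embeddings of the corresponding wavelet sequence spaces on a fixed dyadic cube, and there to reduce compactness to the asymptotic behaviour of finite-dimensional level-by-level identity maps. Since $\Omega$ is a bounded $C^\infty$ domain and all spaces are defined by restriction, I would fix a dyadic cube $Q\supset\Omega$. I would then use a bounded linear extension operator together with the wavelet isomorphism for $\MAu(\rd)$ and $\At(\rd)$. This transfers the question to the sequence spaces $n^{s}_{u,p,q}(Q)$ (respectively $e^{s}_{u,p,q}(Q)$) and $b^{s,\tau}_{p,q}(Q)$ (respectively $f^{s,\tau}_{p,q}(Q)$). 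These consist of sequences $\lambda=\{\lambda_{j,m}\}$ indexed by cubes $Q_{j,m}\subset Q$, so at level $j$ there are about $2^{jd}$ coefficients. Standard arguments then give that $\id_{\mathcal A}$, respectively $\id_\tau$, is compact if, and only if, the sequence space embedding is compact. Using the elementary embeddings between $N$- and $E$-type (respectively $B$- and $F$-type) spaces from Section~\ref{sect-2}, the answer is independent of $q_1,q_2$ once the strict inequality is assumed. It therefore suffices to treat the $N$- and $b$-scales.

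For sufficiency, I would write $\id=\sum_j \id_j$, where $\id_j$ keeps only level $j$. Each $\id_j$ has finite rank, since the cube $Q$ is fixed. So if $\|\id_j\|\to 0$ fast enough (geometric decay is what one gets), the tails $\sum_{j>J}\id_j$ tend to zero in operator (quasi-)norm. Then $\id$ is a limit of finite-rank operators, hence compact; here I would use the $\min(1,p,q)$-triangle inequality to control the quasi-norms. The core computation is the norm of the level-$j$ identity
\[
\ell_{p_1}^{u_1}(2^{jd}) \to \ell_{p_2}^{u_2}(2^{jd}),
\]
where these are Morrey-type finite sequence norms taken with a supremum over dyadic subcubes $P$ of side $2^{-\nu}$, $0\le\nu\le j$. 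For part (i) I expect
\[
\|\id_j\|\sim 2^{j(s_2-s_1)}\max_{0\le\nu\le j}2^{\nu d(\frac1{u_1}-\frac1{u_2})}\,2^{(j-\nu)d\frac1{u_1}(1-\frac{p_1}{p_2})_+}\quad\text{(up to level normalisation).}
\]
The maximum over $\nu$ is attained at an endpoint, which yields exactly the three quantities in \eqref{bd3acomp}. For (ii) the same computation, with the Besov-type normalisation $|P|^{-\tau}$ and the $\ell_p$-sum inside $P$, gives the exponent $\frac1{p_1}-\tau_1-\frac1{p_2}+\max\{\tau_2,\frac{p_1}{p_2}\tau_1\}$.

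For necessity, I would build bounded sequences with no convergent image subsequence, using extremal test sequences at levels $j\to\infty$, which automatically have disjoint supports. There are three such families, one for each term in the condition. The first is a single spike $\lambda_{j,m}=2^{-j(s_1-d/u_1)}$, which detects $\frac1{u_1}-\frac1{u_2}$. The second is a constant sequence filling all of $Q$ at level $j$, which detects $s_1>s_2$. The third is a constant sequence filling a subcube of side $2^{-\nu}$ with a suitably chosen $\nu=\nu(j)$, which detects the $p_1/p_2$-term. If equality holds in the relevant exponent, each family is normalised in the source space, while its image norms stay bounded below. Since the families sit at distinct levels, their images are uniformly separated, which contradicts compactness.

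The main obstacle is the sharp two-sided estimate for $\|\id_j\|$ when $p_1<p_2$. There the supremum over subcubes in the target interacts with the $\ell_{p_1}\to\ell_{p_2}$ gain in a nontrivial way. For the upper bound I would first try Hölder/Jensen inside each subcube $P$, combined with the monotonicity in $p$ of $\ell_p$-norms. This should be followed by a careful optimisation over $\nu$, and I would check that no interior choice of $\nu$ beats the endpoints.
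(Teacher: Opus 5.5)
Your reduction to sequence spaces, the use of the elementary embeddings to dispose of the $\mathcal{E}$- and $F$-cases, and the sufficiency argument via $\id=\sum_j\id_j$ are sound. Your level norm is also correct. With $\varphi_i(t)=t^{d/u_i}$ it is exactly $2^{j(s_2-s_1)}\alpha_j\varphi_1(2^{-j})^{\varrho-1}$ from Theorem~\ref{th:comp}, and at a fixed level the quasi-norms of $b^{s,\tau}_{p,q}$ and $n^s_{u,p,q}$ coincide for $\frac1u=\frac1p-\tau$, so (ii) is indeed the same computation. The upper bound you flag as the main obstacle is routine. Use $\sum_m|\lambda_{j,m}|^{p_2}\le \sup_m|\lambda_{j,m}|^{p_2-p_1}\sum_m|\lambda_{j,m}|^{p_1}$, and control $\sup_m|\lambda_{j,m}|$ by the $\nu=j$ term, as in Substep~1.2 of the proof of Proposition~\ref{Lemma-LS_1+2}.

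The genuine gap is in the necessity part, in your third test family. Take a sequence that is constant, say $1$, on a dyadic subcube $Q_{\nu,k}$ at level $j$ and zero elsewhere. All its $\ell_p$-averages are equal, so $p_1$ and $p_2$ drop out entirely. Its source norm is $2^{js_1}2^{-\nu d/u_1}$ and its target norm is $2^{js_2}2^{-\nu d/u_2}$. The ratio is therefore $2^{j(s_2-s_1)}2^{\nu d(\frac1{u_1}-\frac1{u_2})}$. This only interpolates between your first two families and never produces the factor $2^{j\frac{d}{u_1}(1-\frac{p_1}{p_2})}$. For example, take $u_1=u_2=u$ and $p_1<p_2\le u$. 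Then all three families only give $s_1>s_2$, whereas the true condition is $s_1-s_2>\frac du(1-\frac{p_1}{p_2})$. Non-compactness in the range $0<s_1-s_2\le\frac du(1-\frac{p_1}{p_2})$ is thus left unproved.

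What you need is a \emph{sparse} test sequence, as in Substep~2.3 of \cite{hs13}, to which the proof of Theorem~\ref{th:comp} refers. At level $j$, put a coefficient $1$ in exactly one level-$j$ cube inside each dyadic cube of side $2^{-\mu}$, with $\mu=\lfloor j(1-\frac{p_1}{u_1})\rfloor$. In the source norm the $\nu=0$ term $2^{(\mu-j)d/p_1}$ and the $\nu=j$ term $2^{-jd/u_1}$ then balance. This gives source norm $\sim 2^{j(s_1-\frac{d}{u_1})}$. Testing the target on $P=Q$ gives at least $2^{js_2}2^{(\mu-j)\frac{d}{p_2}}\sim 2^{j(s_2-\frac{d}{u_1}\frac{p_1}{p_2})}$. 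The ratio is therefore $2^{j(s_2-s_1)}2^{j\frac{d}{u_1}(1-\frac{p_1}{p_2})}$, as required. In (ii) the same construction with $\mu\approx jp_1\tau_1$ detects $\frac1{p_1}-\tau_1-\frac1{p_2}+\frac{p_1}{p_2}\tau_1$. With this family replacing your third one, the necessity argument goes through.
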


\remark{As already mentioned, this represents only part of our compactness results for spaces of type ${\At}(\Omega)$ and $\MAu(\Omega)$, just for comparison with the new ones obtained in this paper. Note that we also have continuity results for those `classical' Morrey smoothness spaces which can be found in \cite{hs13,hs14,GHS-23}. Moreover, in \cite{hs20,GHS-21} we could characterise the compactness of such function spaces further in terms of entropy (and approximation) numbers, a topic also studied by Hans Triebel before, cf. the monographs \cite{ET96,T97}. %\red{should we mention here some of the standard literature, including {Edmunds/Triebel \dots}?}
    %\magenta{\cite{ET96}, \cite{T93,T97} \cite{ht1994a,ht1994b}? others?}
    In \cite{hs24} we extended these investigations to questions of nuclearity of the corresponding embeddings. Finally, we also refer to   \cite{ht2021} where the clan concept, introduced by Hans Triebel (for the Morrey smoothness spaces, including $\MAu$ and $\At$), was discussed in detail.

\remark{\label{cond-varrho1}
  Note that in case of $p_1\geq p_2$, condition \eqref{tau-comp-u2} can be rewritten as
\begin{equation}\label{tau-comp-u2'}
  \frac{s_1-s_2}{d} > \left(\frac{1}{p_1}-\tau_1 -\frac{1}{p_2}+\tau_2\right)_+ \quad\text{if}\quad p_1\geq p_2,
\end{equation}
since $\frac{1}{p_1}-\tau_1 -\frac{1}{p_2}+ \frac{p_1}{p_2}\tau_1 = (\tau_1-\frac{1}{p_1})(\frac{p_1}{p_2}-1)\leq 0$.
}
}    
    
\section{Embeddings of sequence spaces}\label{sect-seq}
{Our aim is to study compact embeddings in function spaces of type $\MA(\Omega)$ and $\Aphi(\Omega)$. We follow the nowadays quite standard method to apply the wavelet transformation and thus transfer the problem to sequence spaces. So we deal with the appropriate sequence spaces in this section.}

Both, the  generalised Besov-Morrey spaces $\MB(\rd)$ and  the generalised  Besov-type spaces ${B}_{p,q}^{s,{\varphi}}(\rd)$
 admit  wavelet characterisations, cf.  \cite{hms22} and \cite{HLMS24}, respectively.  The underlying  sequence spaces  are defined below.
}

\begin{definition}\label{def-seq-spaces} 
Let $0<p<\infty$, $0<q\leq \infty$, $s\in \rr$, and $\varphi\in \Gp$. 
\begin{enumerate}[\bfseries\upshape  (i)]
\item	The  generalised Besov-Morrey sequence space $\n(\rd)$ is the set of all double-indexed sequences 
$\lambda:=\{\lambda_{j,m}\}_{j\in\no,m\in\zd}\subset \cc$ for which the quasi-norm 
	\begin{equation} \label{seq-besov}
		\| \lambda\mid \n(\rd)\|:= \biggl(\sum_{j=0}^{\infty}2^{jsq}   \Big\| \sum_{m\in\zd} \lambda_{j,m} \chi_{Q_{j,m}} \,\Big|\, \M(\rd) \Big\|^q \biggr)^{1/q} 
	\end{equation}
	is finite (with the usual modification if $q=\infty$).
        \item The generalised Besov-type sequence space ${b}_{p,q}^{s,{\varphi}}(\rd)$ is defined to be the set of all double-indexed sequences 
$\lambda:=\{\lambda_{j,m}\}_{j\in\no,m\in\zd}\subset \cc$ for which the quasi-norm 
        \begin{align*}
            \| \lambda \mid {b}_{p,q}^{s,{\varphi}}(\rd)\|:=
            \sup_{P\in\mathcal{Q}} {\frac{\varphi(\ell(P))}{|P|^\frac{1}{p}}} 
            \Bigg\{\sum_{j=j_P\vee 0}^\infty   \Bigg(\sum_{\substack{m\in\zd:\\ Q_{j,m}\subset P}} (2^{j (s-\frac{d}{p})}
            |\lambda_{jm}|)^p  \Bigg)^\frac{q}{p}\Bigg\}^\frac{1}{q}
        \end{align*}
        is finite (with the usual modification for $q=\infty$).
\end{enumerate}
\end{definition}
\smallskip

%\noindent{\em Convention.}  We adopt the same custom to write   $\aseq$ instead of $\n$ or $\e$, for convenience, when both %scales are meant simultaneously, 
%assuming always that there exist $C,\varepsilon>0$ such that  \eqref{cond-E} holds, when $q<\infty$ and $\aseq=\e$.

\begin{remark} \label{seq-besov-rem} When $\varphi(t)=t^{\frac{\nd}{u}}$ for $t>0$ and $0<p\leq u<\infty$, then 
	$$ 
	\n(\rd)={n}^s_{u,p,q}(\rd) % \quad \text{and} \quad \e(\zd)={e}^s_{u,p,q}(\rd)
	$$
	are the usual Besov-Morrey sequence spaces. % and Triebel-Lizorkin-Morrey sequence spaces.
	Moreover if $u=p$, then the space $\n(\rd)$ coincides with a classical Besov sequence space  $b^s_{p,q}(\rd)$ since $\M(\rd)=L_p(\rd)$ in that case. 
\end{remark}

Let $\Omega$ be a bounded domain in $\rd$. For any $c>0$ we put
\[\Omega_c=\{x\in \rd:\, \mathrm{dist}(x,\Omega)<c\}.  \]
We choose a dyadic cube $Q\subset \rd$ such that $\Omega_{2c}\subset Q$. We may assume that $Q=[0,2^{\nu_0}]^{d}$ for some $\nu_0\le 0$. 
We define the sequence space $\n(Q)$ as the set of all double-indexed sequences 
$\lambda:=\{\lambda_{j,m}\} \subset \cc$, with $j\in\no, m \in\zd$ such that $Q_{j,m} \subset Q$, for which the quasi-norm  
\begin{equation} \label{seq-besov-Om}
	\| \lambda\mid \n(Q)\|:= \Biggl(\sum_{j=0}^{\infty}2^{jsq}   \Big\| \sum_{m \in \zd: Q_{j,m}\subset Q} \lambda_{j,m} \chi_{Q_{j,m}} \,\Big|\, \M(\rd) \Big\|^q \Biggr)^{1/q} 
\end{equation}
is finite. Obviously, $\n(Q)$ can be interpreted as a subspace of $\n(\rd)$. \\

The proof of the next lemma is similar to the proof of Lemma~2.12 in \cite{hms22} {and thus omitted here}.

\begin{lemma} \label{equiv-norm}
	Let $0<p<\infty$, $0<q\leq \infty$, $s\in \rr$, and $\varphi\in \Gp$. Then 
	$$
	\n(Q)= \left\{\lambda=\{\lambda_{j,m}\}_{j\in \no, m\in \zd}:  \| \lambda\mid \n(Q)\|^*<\infty \right\},
	$$
	where
	$$
	\| \lambda\mid \n(Q)\|^*:=\Bigg(\sum_{j=0}^\infty 2^{jsq} 
	\mathop{\sup_{\nu: \nu \leq j}}_{k\in \zd}\!  \varphi(2^{-\nu})^q \,2^{(\nu-j)\frac{\nd}{p} q}\Bigg(\!\! \mathop{\sum_{m\in\zd:}}_{Q_{j,m}\subset Q_{\nu,k}\subset Q}\!\!|\lambda_{j,m}|^p\Bigg)^{\frac q p}\Bigg)^{1/q}
	$$
	with the usual modification if $q=\infty$. Furthermore, $ \| \cdot \mid \n(Q)\|^*$ is an equivalent quasi-norm in $ \n(Q)$.
\end{lemma}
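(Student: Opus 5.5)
The plan is to compare, for each fixed level $j\in\no$, the generalised Morrey quasi-norm of the step function
\[
g_j:=\sum_{m\in\zd:\,Q_{j,m}\subset Q}\lambda_{j,m}\chi_{Q_{j,m}}
\]
with the inner supremum appearing in $\|\lambda\mid\n(Q)\|^*$. I will show that these two quantities are equivalent, with constants independent of $j$ and $\lambda$. Taking the $\ell_q$-sum with the weights $2^{jsq}$ then gives both the set equality and the equivalence of quasi-norms. By \eqref{Mf-norm}, $\|g_j\mid\M(\rd)\|=\sup_{P\in\mathcal{Q}}\varphi(\ell(P))|P|^{-1/p}\|g_j\mid L_p(P)\|$, so everything reduces to sorting the dyadic cubes $P=Q_{\nu,k}$ into cases according to $\nu$ and the position of $P$ relative to $Q$.

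\emph{Lower bound.} Restrict the supremum to cubes $P=Q_{\nu,k}\subset Q$ with $\nu\le j$. Dyadic cubes are nested, so $g_j$ restricted to such a $P$ is exactly $\sum_{Q_{j,m}\subset P}\lambda_{j,m}\chi_{Q_{j,m}}$. Hence
\[
\varphi(2^{-\nu})\,2^{\nu d/p}\Big(\sum_{Q_{j,m}\subset P}|\lambda_{j,m}|^p2^{-jd}\Big)^{1/p}=\varphi(2^{-\nu})\,2^{(\nu-j)d/p}\Big(\sum|\lambda_{j,m}|^p\Big)^{1/p},
\]
and this is precisely the inner term of $\|\cdot\mid\n(Q)\|^*$. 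Therefore $\|\cdot\|^*\le\|\cdot\mid\n(Q)\|$.

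\emph{Upper bound.} Three kinds of cubes $P$ remain.

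(a) Cubes with $\nu>j$, i.e. smaller than the wavelet cubes. Such a $P$ lies in a single $Q_{j,m}$, so the average equals $|\lambda_{j,m}|$ and the contribution is $\varphi(2^{-\nu})|\lambda_{j,m}|\le\varphi(2^{-j})|\lambda_{j,m}|$ by monotonicity of $\varphi$. This is the term with $\nu=j$, $Q_{\nu,k}=Q_{j,m}$.

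(b) Cubes with $\nu\le j$ that are not contained in $Q$. Since $Q$ is itself dyadic, such a $P$ is either disjoint from $Q$, where $g_j=0$, or it contains $Q$. In the latter case $\ell(P)=2^{-\nu}\ge 2^{\nu_0}=\ell(Q)$, and the $\Gp$ property \eqref{Gp-def} gives $\varphi(\ell(P))\ell(P)^{-d/p}\le\varphi(\ell(Q))\ell(Q)^{-d/p}$. The integral of $|g_j|^p$ over $P$ equals the integral over $Q$, so the contribution is dominated by the term for $P=Q$, which corresponds to $\nu=-\nu_0$, $Q_{\nu,k}=Q$.

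(c) Cubes with $\nu\le j$ and $P\subset Q$. These are exactly the terms already treated in the lower bound.

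Consequently $\|g_j\mid\M(\rd)\|$ is bounded by a constant times the inner supremum. The doubling property of $\varphi$ absorbs the constant coming from passing between \eqref{Mf-norm} and the $\star$-norm, if needed.

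The only genuinely delicate step is (b), the cubes larger than $Q$. It is exactly where the hypothesis $\varphi\in\Gp$ is used, beyond mere monotonicity. It also uses that $Q$ is a dyadic cube, which excludes partial overlaps. Everything else is bookkeeping with nested dyadic cubes. The case $q=\infty$ follows by the same levelwise equivalence with the obvious modification.
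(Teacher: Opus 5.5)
Your proof is correct and follows the route the paper intends. The paper omits the argument and points to the proof of Lemma~2.12 in \cite{hms22}, which is this same levelwise comparison: cubes of level $\nu>j$ are reduced to $\nu=j$ by monotonicity of $\varphi$, and cubes of level $\nu\le j$ are computed directly. Your extra case (b), where cubes strictly containing $Q$ are dominated via \eqref{Gp-def} by the term $P=Q$, is exactly the modification needed to pass from $\rd$ to $Q$; with the dyadic quasi-norm \eqref{Mf-norm} it even gives equality of the two expressions.
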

\begin{remark}\label{rem-specialcase}
If  $\lim_{t\rightarrow  0}\varphi(t)=c>0$, then 
	\[ \mathop{\sup_{\nu: \nu \leq j}}_{k\in \zd}\!  \varphi(2^{-\nu}) \,2^{(\nu-j)\frac{\nd}{p}}\Bigg(\!\! \mathop{\sum_{m\in\zd:}}_{Q_{j,m}\subset Q_{\nu,k}\subset Q}\!\!|\lambda_{j,m}|^p\Bigg)^{\frac 1 p} \sim \mathop{\sup_{m\in\zd:}}_{Q_{j,m}%\subset Q_{\nu,k}
		\subset Q}\!\! |\lambda_{j,m}| \]	
uniformly in $j$. In consequence in that case we have the following  norm equivalence 
\begin{equation}
	\| \lambda\mid \n(Q)\|^* \sim  \Bigg(\sum_{j=0}^\infty 2^{jsq} 
\mathop{\sup_{m\in\zd:}}_{Q_{j,m}%\subset Q_{\nu,k}
	\subset Q}\!\! |\lambda_{j,m}|^q\Bigg)^{1/q}
\end{equation}
and for any $p$ the space $\n(Q)$ coincides with  $\ell_q(2^{js}\ell_\infty^{M_j})$,  where  $M_j=2^{jd}|Q|$.%$j(Q)= d^{-1} \log_2 |Q|$. 
\end{remark}

\begin{theorem}\label{th-cont}
Let $s_i\in\rr$, $0<p_i<\infty$, $0<q_i\leq \infty$, and $\varphi_i\in {\mathcal G}_{p_i}$, for $i=1,2$. 
We assume without loss of generality that $\varphi_1(1)=\varphi_2(1)=1$. 
Let $ \varrho=\min(1,\frac{p_1}{p_2})$ and $\displaystyle\alpha_j= \sup_{0\le\nu\le j}\frac{\varphi_2(2^{-\nu})}{\varphi_1(2^{-\nu})^\varrho}$, $j\in \mathbb{N}_0$. 

There is a continuous embedding 
\begin{equation} \label{embed1}
	\na(Q) \hookrightarrow \nb(Q) 
\end{equation}
if, and only if, 

\begin{align}
	%\label{cond0}
	%\sup_{\nu\le 0}\frac{\varphi_2(2^{-\nu})}{\varphi_1(2^{-\nu})^\varrho} & < %\infty , 
	%\intertext{and} 
	\label{cond2}
	\left\{   2^{j(s_2-s_1)} \alpha_j \frac{\varphi_1(2^{-j})^\varrho}{\varphi_1(2^{-j})}\right\}_{j\in \no} & \in \ell_{q^*}  \qquad \text{where}  \quad \frac{1}{q^*}=\left(\frac{1}{q_2}-\frac{1}{q_1}\right)_+  .
\end{align}

%%%%%%%%%%%%%%%%%%%%%%%%%%%%%%%%%%%%%%%%%%%%%%%%%%%%Let 
%$\nu\in\zz$ with  $j\geq \max(0,\nu)$. Suppose the following conditions hold:
%1) Let the sequence $\alpha_j$  be almost strictly increasing or $\alpha_j\rightarrow \alpha<\infty$ and there exist $c>0$ such that 
%\begin{equation}\label{thm1-c}
% c\le \inf_{j\in \mathbb{N}} \frac{\varphi_2(2^{-j})}{\varphi_1(2^{-j})^\varrho}
%\exists  C>0 \quad \forall j\in \mathbb{N}_0 \quad \varphi_1(2^{-j})^\varrho\le C \varphi_2(2^{-j}).
%\end{equation} 
% Then the embedding 
%\begin{align} %\label{cond2a}
% &\sup_{\nu\le j}\frac{\varphi_2(2^{-\nu})}{\varphi_1(2^{-\nu})^\varrho} \le C  \frac{\varphi_2(2^{-j})}{\varphi_1(2^{-j})^
	%\varrho},
% \intertext{and}
%\label{cond2}
%\left\{   2^{j(s_2-s_1)} \frac{\varphi_2(2^{-j})}{\varphi_1(2^{-j})}\right\}_j \in \ell_r,  \quad \text{for}  \quad 0< r\leq%\infty \quad \text{such that}\quad\frac{1}{r}=\big(\frac{1}{q_2}-\frac{1}{q_1}\big)_+  .
%\end{align} 
%2) Let  $\lim_{j\rightarrow \infty} \alpha_j= \alpha<\infty$ and let the condition \eqref{thm1-c} be not satisfied. 
%   If 
% $s_1>s_2$ or $s_1=s_2$  and  $q_1\le q_2$ then the embedding \eqref{embed1} holds. 
%In both cases  the embeddings  \eqref{embed1}  implies the conditions \eqref{cond0}. 
%%%%%%%%%%%%%%%%%%%%%%%%%%%%%%%%%%%%%%%%%%%%%%%%%%%%%%%%%%%%%%%%%%%%%%%%%%%%%%%%%%%%%%%%%%%%%%%%%%%%%
\end{theorem}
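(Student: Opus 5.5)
The plan is to reduce the embedding to a level-by-level estimate, using the equivalent quasi-norm $\|\cdot\mid\n(Q)\|^*$ of Lemma~\ref{equiv-norm}. For $j\in\no$ and a finite sequence $\lambda^{(j)}=\{\lambda_{j,m}\}_{Q_{j,m}\subset Q}$ at the single level $j$, set
\[
N_{j}(\lambda^{(j)};\varphi,p):=\sup_{\nu\le j,\,k}\varphi(2^{-\nu})\,2^{(\nu-j)\frac dp}\Big(\sum_{Q_{j,m}\subset Q_{\nu,k}\subset Q}|\lambda_{j,m}|^p\Big)^{1/p}.
\]
Since $Q=[0,2^{\nu_0}]^d$ with $\nu_0\le 0$ is fixed, only $\nu_0\le\nu\le j$ matter, and the finitely many levels $\nu_0\le\nu<0$ only change constants (doubling of $\varphi_i$). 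Then $\na(Q)=\ell_{q_1}(2^{js_1}\{N_j(\cdot;\varphi_1,p_1)\})$ and likewise for the target. A standard argument (Hölder for the sufficiency, and choosing extremal $\lambda^{(j)}$ on each level weighted by an $\ell_{q^*}$-dual sequence for the necessity) shows that \eqref{embed1} holds if and only if $\{2^{j(s_2-s_1)}c_j\}\in\ell_{q^*}$, where $c_j$ is the norm of the identity from $(\cc^{M_j},N_j(\cdot;\varphi_1,p_1))$ to $(\cc^{M_j},N_j(\cdot;\varphi_2,p_2))$. The heart of the proof is the computation
\[
c_j\sim \alpha_j\,\varphi_1(2^{-j})^{\varrho-1}.
\]

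For the upper bound, I fix $\nu\le j$ and a cube $Q_{\nu,k}$, and write $A_\nu=\sum_{Q_{j,m}\subset Q_{\nu,k}}|\lambda_{j,m}|^{p_1}$. From the source norm, $A_\nu^{1/p_1}\le N^{(1)}\varphi_1(2^{-\nu})^{-1}2^{(j-\nu)d/p_1}$, and taking $\nu=j$ gives $\sup_m|\lambda_{j,m}|\le N^{(1)}/\varphi_1(2^{-j})$. If $p_1\le p_2$, I interpolate $\big(\sum|\lambda|^{p_2}\big)^{1/p_2}\le (\sup|\lambda|)^{1-p_1/p_2}A_\nu^{1/p_2}$. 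Inserting both bounds, the factors $2^{(\nu-j)d/p_2}$ and $2^{(j-\nu)d/p_2}$ cancel, and what remains is $\varphi_2(2^{-\nu})\varphi_1(2^{-\nu})^{-\varrho}\varphi_1(2^{-j})^{\varrho-1}N^{(1)}$. Taking the supremum over $\nu$ gives $\alpha_j$. If $p_1>p_2$, so that $\varrho=1$, Hölder on the $2^{(j-\nu)d}$ terms gives $2^{(\nu-j)d/p_2}(\sum|\lambda|^{p_2})^{1/p_2}\le 2^{(\nu-j)d/p_1}A_\nu^{1/p_1}$, which yields the factor $\varphi_2/\varphi_1$ at level $\nu$ directly.

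For the lower bound, given $\nu\le j$ (where the supremum defining $\alpha_j$ is essentially attained), I take $\lambda^{(j)}$ as follows. When $p_1<p_2$, put $\lambda_{j,m}=1$ for $m$ in a set of about $2^{(j-\nu)d}\varphi_1(2^{-j})^{p_1}\varphi_1(2^{-\nu})^{-p_1}$ cubes inside one $Q_{\nu,k}$, spread evenly (one per subcube of an intermediate size $2^{-\mu}$, chosen via the $\mathcal G_{p_1}$ monotonicity \eqref{Gp-def} so that the count is an integer up to constants). The $\mathcal G_{p_1}$ property also guarantees that every intermediate cube $Q_{\mu,\cdot}$ gives source norm $\lesssim 1/\varphi_1(2^{-j})$. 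The target norm tested on $Q_{\nu,k}$ is then $\gtrsim\varphi_2(2^{-\nu})\big(\varphi_1(2^{-j})/\varphi_1(2^{-\nu})\big)^{p_1/p_2}$, which gives the ratio $\alpha_j\varphi_1(2^{-j})^{\varrho-1}$. When $p_1\ge p_2$, I take all $\lambda_{j,m}=1$ on $Q_{\nu,k}$.

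I expect the main obstacle to be this sparse construction: controlling the source norm uniformly over all intermediate scales. That is exactly where \eqref{Gp-def} and the monotonicity of $\varphi_1$ must be used carefully, and where integer rounding has to be absorbed by the doubling property.
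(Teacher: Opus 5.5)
Your proposal is correct and is essentially the argument the paper relies on when it refers to \cite[Theorem~4.1]{hms22}: the level-wise quasi-norm of Lemma~\ref{equiv-norm}, the level estimates that reappear as \eqref{s1}--\eqref{s2} in the proof of Theorem~\ref{th:comp}, and single-level test sequences for necessity (all ones on $Q_{\nu,k}$ if $p_1\ge p_2$, and an evenly spread set of about $2^{(j-\nu)d}\varphi_1(2^{-j})^{p_1}\varphi_1(2^{-\nu})^{-p_1}$ ones if $p_1<p_2$). One normalisation slip needs fixing: with $\lambda_{j,m}=1$ on the sparse set the source level norm is $\lesssim\varphi_1(2^{-j})$, not $\lesssim 1/\varphi_1(2^{-j})$, and it is this bound that gives your ratio $\alpha_j\varphi_1(2^{-j})^{\varrho-1}$.
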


The proof of the above theorem is similar to the proof of Theorem~4.1 in \cite{hms22}. That theorem gives the sufficient and necessary conditions for continuous embeddings of the spaces $\n(\rr^d)$. The assumptions of the theorem consist of two parts: the local conditions -- involving the values of $\varphi_i$ near zero -- and the global one, depending on the behaviour of $\varphi_i$ at infinity. Now we do not need the global condition and the local one coincides with \eqref{cond2}.

\begin{corollary}\label{cor1} Under the same assumptions as in Theorem~\ref{th-cont}, if the embedding \eqref{embed1} holds, then $s_1\ge s_2$. 
\end{corollary}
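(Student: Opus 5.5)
Since the embedding \eqref{embed1} is assumed to hold, Theorem~\ref{th-cont} yields condition \eqref{cond2}. In every case $\ell_{q^*}\subset\ell_\infty$, so the sequence in \eqref{cond2} is bounded:
\[
\sup_{j\in\no} 2^{j(s_2-s_1)}\,\alpha_j\,\varphi_1(2^{-j})^{\varrho-1} <\infty .
\]
The plan is to bound the product $\alpha_j\varphi_1(2^{-j})^{\varrho-1}$ from below by a positive constant independent of $j$. Once this is done, $2^{j(s_2-s_1)}$ must stay bounded as $j\to\infty$, and this forces $s_2\le s_1$.

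First, since $\alpha_j$ is a supremum over $0\le\nu\le j$, taking $\nu=0$ and using the normalisation $\varphi_1(1)=\varphi_2(1)=1$ gives $\alpha_j\ge \varphi_2(1)/\varphi_1(1)^\varrho=1$ for all $j$.

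Second, we treat the factor $\varphi_1(2^{-j})^{\varrho-1}$. Because $\varrho\le 1$, the exponent $\varrho-1$ is nonpositive. Moreover $\varphi_1\in\mathcal{G}_{p_1}$ is non-decreasing, so $\varphi_1(2^{-j})\le\varphi_1(1)=1$ for $j\ge 0$. Raising a number in $(0,1]$ to a nonpositive power gives a number $\ge 1$, hence $\varphi_1(2^{-j})^{\varrho-1}\ge 1$.

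Combining both bounds, $2^{j(s_2-s_1)}\le 2^{j(s_2-s_1)}\alpha_j\varphi_1(2^{-j})^{\varrho-1}\le C$ for all $j\in\no$. If $s_2>s_1$, the left-hand side tends to infinity, a contradiction; therefore $s_1\ge s_2$. There is no real obstacle here. The only points needing care are that $\varphi_1>0$, so the negative power is well defined (this holds since $\mathcal{G}_p$ functions map into $(0,\infty)$), and that the normalisation $\varphi_i(1)=1$ is the one fixed in Theorem~\ref{th-cont}.
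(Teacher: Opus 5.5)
Your proof is correct and follows essentially the same argument as the paper. In both, boundedness of the sequence in \eqref{cond2}, combined with $\alpha_j\ge 1$ and $\varphi_1(2^{-j})^{\varrho-1}\ge 1$, forces $2^{j(s_2-s_1)}$ to stay bounded; the only difference is that you handle $\varrho=1$ and $\varrho<1$ together, whereas the paper treats the two cases separately.
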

\begin{proof}
	%Assume $s_1<s_2$. 
	The sequence $\{\alpha_j\}_{j\in \no}$ is a positive  increasing sequence, so  if $\{ 2^{j(s_2-s_1)} \alpha_j\}_{j\in \no}\in \ell_{\infty}$, then $s_1\ge s_2$. This proves the statement for $\varrho=1$. Similarly, if  $\varrho<1$, then    
	\begin{equation}\label{eq1}
	\alpha_j \frac{\varphi_1(2^{-j})^\varrho}{\varphi_1(2^{-j})}\ge \alpha_j
	\end{equation}  
	for any $j\in \no$. So we have $s_1\ge s_2$ also for the second case.  
\end{proof}
\begin{corollary}\label{cor2}
	In addition to the assumptions of Theorem~\ref{th-cont}, assume $s_1=s_2$ and  $\lim_{t\rightarrow  0}\varphi_1(t)=0$. The embedding \eqref{embed1} holds if, and only if, $p_1\ge p_2$,  $q_1\le q_2$ and there is a constant $C>0$ such that $\varphi_2(2^{-j})\le C \varphi_1(2^{-j})$, for any $j\in \no$.  
\end{corollary}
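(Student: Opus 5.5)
Everything reduces to analysing condition \eqref{cond2} of Theorem~\ref{th-cont} with $s_1=s_2$. The sequence in question is then
\[
\beta_j:=\alpha_j\,\varphi_1(2^{-j})^{\varrho-1},\qquad j\in\no,
\]
and the embedding holds if, and only if, $\{\beta_j\}\in\ell_{q^*}$. We split into the two cases $\varrho=1$ and $\varrho<1$, and within each we separate the conditions on $q$ and on $\varphi$.

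\emph{Sufficiency.} Assume $p_1\ge p_2$, $q_1\le q_2$ and $\varphi_2(2^{-\nu})\le C\varphi_1(2^{-\nu})$. Then $\varrho=1$, so $\beta_j=\alpha_j\le C$ for all $j$, and $q^*=\infty$. Hence \eqref{cond2} holds.

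\emph{Necessity of $p_1\ge p_2$.} Suppose instead $p_1<p_2$, so $\varrho<1$. Since $\alpha_j\ge\alpha_0=\varphi_2(1)\varphi_1(1)^{-\varrho}=1$, we get $\beta_j\ge\varphi_1(2^{-j})^{\varrho-1}$. This tends to $\infty$ because $\varphi_1(2^{-j})\to0$ by assumption. Then $\{\beta_j\}\notin\ell_\infty\supset\ell_{q^*}$, a contradiction. So $\varrho=1$ and $\beta_j=\alpha_j$.

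\emph{Necessity of $q_1\le q_2$.} Suppose $q_1>q_2$, so $q^*<\infty$. Then $\{\alpha_j\}\in\ell_{q^*}$ would force $\alpha_j\to0$. But $\alpha_j$ is nondecreasing with $\alpha_j\ge1$, which is impossible. Hence $q_1\le q_2$, $q^*=\infty$, and \eqref{cond2} reduces to $\sup_j\alpha_j<\infty$.

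\emph{Necessity of the $\varphi$-condition.} Boundedness of $\alpha_j$ gives, for every $\nu\in\no$, $\varphi_2(2^{-\nu})\le(\sup_j\alpha_j)\,\varphi_1(2^{-\nu})$, which is the claimed condition.

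No step is a real obstacle; the only point needing care is the lower bound $\alpha_j\ge1$, which rests on the normalisation $\varphi_i(1)=1$ and the index $\nu=0$ in the supremum defining $\alpha_j$.
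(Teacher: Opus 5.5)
Your proof is correct and follows essentially the same route as the paper. You reduce to condition \eqref{cond2} with $s_1=s_2$, use $\alpha_j\ge 1$ together with $\varphi_1(2^{-j})\to 0$ to rule out $\varrho<1$, use that $\{\alpha_j\}$ is not a null sequence to force $q^*=\infty$ (i.e.\ $q_1\le q_2$), and read off $\varphi_2(2^{-j})\le \alpha_j\varphi_1(2^{-j})\le C\varphi_1(2^{-j})$ from the boundedness of $\alpha_j$, with sufficiency being immediate. Your explicit justification of $\alpha_j\ge 1$ via the normalisation $\varphi_i(1)=1$ and the index $\nu=0$ is a detail the paper leaves implicit.
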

\begin{proof} 
If \eqref{embed1} holds, then $\big\{\alpha_j\frac{\varphi_1(2^{-j})^\varrho}{\varphi_1(2^{-j})}\big\}_{j\in \no}\in \ell_{q^*}$. If  $\lim_{t\rightarrow  0}\varphi_1(t)=0$ and $\varrho<1$, then the above sequence is not bounded since $\alpha_j\ge 1$. This implies  $\varrho=1$, i.e., $p_1\ge p_2$, and $\big\{\alpha_j\big\}_{j\in \no}\in \ell_{q^*}$. But $\{\alpha_j\}_{j\in \no}$ is not a null sequence so $q^*=\infty$. At the end 
		%Consequently,  
		\[\frac{\varphi_2(2^{-j})}{\varphi_1(2^{-j})} \le \alpha_j\le C. \]
	%Because of \eqref{eq1}, we have always $\{\alpha_j\}_{j\in \no}\in \ell_{q^*}$, that can hold only if $q^*=\infty$ since .  	
	If $q^*=\infty$, $\varrho=1$ and $\varphi_2(2^{-j})\le C \varphi_1(2^{-j})$, then we have immediately  
	%_1(2^{-j})^\varrho}\le C \frac{\varphi_1(2^{-j})}{\varphi_1(2^{-j})^\varrho}. \]
%In consequence,
$\big\{\alpha_j\frac{\varphi_1(2^{-j})^\varrho}{\varphi_1(2^{-j})}\big\}_{j\in \no}\in \ell_\infty$. 	
\end{proof}

 \ignore{The similar statement in the case $\lim_{t\rightarrow  0}\varphi_1(t)=c>0$ is covered by the next two lemmas.}
 In next two corollaries we consider the cases when at least one of the functions $\varphi_i$, $i=1,2$, satisfies the identity $\lim_{t\rightarrow  0}\varphi_i(t)=c>0$. 
\begin{corollary}\label{cor2a} 
	Let $0<p<\infty$, $\varphi\in {\mathcal G}_{p}$, and $s_i\in\rr$,  $0<q_i\leq \infty$,  for $i=1,2$. 
	We assume without loss of generality that $\varphi(1)=1$. 
	There is a continuous embedding 
	\begin{align} \label{embednb}
		n^{s_1}_{\varphi,p,q_1}(Q) \hookrightarrow \ell_{q_2}\left(2^{js_2}\ell_\infty^{{2^{jd}|Q|}}\right)
	\intertext{if, and only if,} \label{embednba} 
	\left\{2^{j(s_2-s_1)}\varphi( 2^{-j})^{-1}\right\}_{j\in \no}  \in \ell_{q^*}.
\end{align}
\end{corollary}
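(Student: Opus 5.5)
The natural route is to realise the target space as a generalised Besov–Morrey sequence space and then apply Theorem~\ref{th-cont}. By Remark~\ref{rem-specialcase}, with $\varphi_2\equiv 1$ (which lies in every $\mathcal{G}_{p_2}$ and satisfies $\lim_{t\to0}\varphi_2(t)=1>0$) and any $0<p_2<\infty$, we have
\[
n^{s_2}_{\varphi_2,p_2,q_2}(Q)=\ell_{q_2}\bigl(2^{js_2}\ell_\infty^{2^{jd}|Q|}\bigr)
\]
with equivalent quasi-norms. This identification can also be checked directly from Lemma~\ref{equiv-norm}: for fixed $j$, the supremum over $\nu\le j$ is dominated by $\sup_m|\lambda_{j,m}|$, using the $\ell_p$-average over at most $2^{(j-\nu)d}$ cubes, and it is attained at $\nu=j$. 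So \eqref{embednb} is equivalent to \eqref{embed1} with $\varphi_1=\varphi$, $p_1=p$ and the pair $(\varphi_2,p_2)=(1,p_2)$. The choice of $p_2$ is free, and I intend to take $p_2\le p$ so that $\varrho=1$.

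With $\varrho=1$, condition \eqref{cond2} becomes the requirement that $\{2^{j(s_2-s_1)}\alpha_j\}_{j\in\no}\in\ell_{q^*}$, where
\[
\alpha_j=\sup_{0\le\nu\le j}\varphi(2^{-\nu})^{-1}.
\]
Since $\varphi\in\mathcal{G}_p$ is non-decreasing, $\varphi(2^{-\nu})^{-1}$ increases in $\nu$, so the supremum is attained at $\nu=j$ and $\alpha_j=\varphi(2^{-j})^{-1}$. Condition \eqref{cond2} therefore reads exactly as \eqref{embednba}, and Theorem~\ref{th-cont} gives the equivalence in both directions.

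As a cross-check, one could instead take $p_2>p$, so that $\varrho=p/p_2<1$. Then $\alpha_j=\varphi(2^{-j})^{-\varrho}$, and the factor $\varphi(2^{-j})^{\varrho-1}$ restores $\varphi(2^{-j})^{-1}$, so the result is the same. This consistency reflects the fact that the target space does not depend on $p_2$.

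The only delicate point is making sure that the equivalence of quasi-norms in Remark~\ref{rem-specialcase} holds uniformly in $j$, because the $\ell_{q_2}$-sum is over all $j$. Constant factors are harmless, so this causes no trouble. A direct proof is also available if one prefers to avoid the theorem. Sufficiency follows from $|\lambda_{j,m}|\le \varphi(2^{-j})^{-1}\|\cdot\|^*_j$, taking $\nu=j$ in Lemma~\ref{equiv-norm}, followed by Hölder's inequality in $j$. Necessity follows by testing with sequences that have a single nonzero entry $\lambda_{j,m}=a_j$ at each level, for which the $n^{s_1}_{\varphi,p,q_1}$ level norm is about $\varphi(2^{-j})|a_j|$, again because the supremum over $\nu$ is attained at $\nu=j$ by the $\mathcal{G}_p$ monotonicity of $t^{-d/p}\varphi(t)$. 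Then $\ell_{q_1}\hookrightarrow\ell_{q_2}$ with weights forces the weight ratio to lie in $\ell_{q^*}$.
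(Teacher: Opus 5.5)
Your proposal is correct and follows essentially the paper's route. The paper also identifies the target with $n^{s_2}_{\varphi_2,p_2,q_2}(Q)$ for some $\varphi_2\in\mathcal{G}_{p_2}$ with $\lim_{t\to0}\varphi_2(t)>0$ via Remark~\ref{rem-specialcase}, applies Theorem~\ref{th-cont}, and computes $\alpha_j\sim\varphi(2^{-j})^{-\varrho}$ for general $\varrho$ (this matches your $p_2>p$ cross-check); your choice $\varphi_2\equiv1$, $p_2\le p$ simply gives $\varrho=1$ and makes the identification exact, and your direct argument with single-entry test sequences is also sound.
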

\begin{proof}
The corollary follows from Theorem~\ref{th-cont} and Remark~\ref{rem-specialcase}. If  $\varphi_2\in {\mathcal G}_{p_2}$ is such that 
	 $\ds\lim_{t\rightarrow  0}\varphi_2(t)=c>0$, then 
	 \[ \alpha_j\sim \varphi(2^{-j})^{-\varrho}\qquad \text{and}\qquad  n^{s_2}_{\varphi_2,p_2,q_2}(Q) =  \ell_{q_2}\left(2^{js_2}\ell_\infty^{{2^{jd}|Q|}}\right).   \]
\end{proof}
\begin{corollary}\label{cor2b} 
	Let $0<p<\infty$, $\varphi\in {\mathcal G}_{p}$, and $s_i\in\rr$,  $0<q_i\leq \infty$,  for $i=1,2$. 
	We assume without loss of generality that $\varphi(1)=1$. 
	There is a continuous embedding 
	\begin{align} \label{embedbn}
	\ell_{q_1}\left(2^{js_1}\ell_\infty^{{2^{jd}|Q|}}\right)	\hookrightarrow n^{s_2}_{\varphi,p,q_2}(Q) 
		%\intertext{if, and only if,} 
		%\left\{2^{j(s_2-s_1)}\varphi 2^{-j})^{-1}\right\}_{j\in \no}  \in %\ell_{q^*}
	\end{align}
if, and only if, $s_1>s_2$ or $s_1=s_2$ and $q_1\leq q_2$.% ($q^*=\infty$).
\end{corollary}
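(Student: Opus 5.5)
The plan is to reduce everything to Theorem~\ref{th-cont}, in the same way Corollary~\ref{cor2a} was obtained, this time with the roles of source and target exchanged. The source space $\ell_{q_1}(2^{js_1}\ell_\infty^{2^{jd}|Q|})$ is, by Remark~\ref{rem-specialcase}, the space $n^{s_1}_{\varphi_1,p_1,q_1}(Q)$ for any $\varphi_1\in\mathcal{G}_{p_1}$ with $\lim_{t\to0}\varphi_1(t)=c>0$. The simplest choice is $\varphi_1\equiv 1$, which lies in every $\mathcal{G}_{p_1}$ and satisfies $\varphi_1(1)=1$. For convenience I would take $p_1=p$. This choice of $p_1$ is harmless because the space $n^{s_1}_{\varphi_1,p_1,q_1}(Q)$ does not depend on $p_1$ when $\varphi_1\equiv1$, and it gives $\varrho=\min(1,p_1/p)=1$.

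Next I apply Theorem~\ref{th-cont} with $\varphi_1\equiv1$, $\varphi_2=\varphi$ and $\varrho=1$. In that case $\alpha_j=\sup_{0\le\nu\le j}\varphi(2^{-\nu})$. Since $\varphi$ is non-decreasing, this supremum is attained at $\nu=0$, so $\alpha_j=\varphi(1)=1$. The factor $\varphi_1(2^{-j})^{\varrho}/\varphi_1(2^{-j})$ equals $1$. Condition \eqref{cond2} therefore becomes $\{2^{j(s_2-s_1)}\}_{j\in\no}\in\ell_{q^*}$.

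It remains to read off this condition. If $q_1\le q_2$, then $q^*=\infty$ and the sequence is bounded if and only if $s_1\ge s_2$. If $q_1>q_2$, then $q^*<\infty$ and the sequence lies in $\ell_{q^*}$ if and only if $s_1>s_2$, because for $s_1=s_2$ it is the constant sequence $1$, which is not in $\ell_{q^*}$. Combining both cases gives exactly $s_1>s_2$, or $s_1=s_2$ and $q_1\le q_2$.

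The only point needing care, which I expect to be the main (mild) obstacle, is justifying the identification of the source space with $n^{s_1}_{\varphi_1,p_1,q_1}(Q)$ up to equivalent quasi-norms. This uses Lemma~\ref{equiv-norm} and Remark~\ref{rem-specialcase}. For $\varphi_1\equiv1$ one can also see it directly: $\mathcal{M}_{1,p}(\rd)=L_\infty(\rd)$ by \eqref{M1p}, and $\|\sum_m\lambda_{j,m}\chi_{Q_{j,m}}\mid L_\infty\|=\sup_m|\lambda_{j,m}|$. With that identification in place, the equivalence of continuity of the embeddings is immediate.
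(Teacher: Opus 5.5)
Your proposal is correct and follows the paper's route. The paper also realises the source space as $n^{s_1}_{\varphi_1,p_1,q_1}(Q)$ with $\lim_{t\to0}\varphi_1(t)>0$, via Remark~\ref{rem-specialcase}, and applies Theorem~\ref{th-cont}, noting that $\alpha_j\varphi_1(2^{-j})^{\varrho-1}\sim 1$ so that \eqref{cond2} reduces to $\{2^{j(s_2-s_1)}\}_{j\in\no}\in\ell_{q^*}$; your concrete choice $\varphi_1\equiv1$, $p_1=p$ makes this exact ($\varrho=1$, $\alpha_j=1$) and lets you identify the source space directly through \eqref{M1p}.
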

\begin{proof}
 The corollary can be proved in the similar way as the above one. Now  $\lim_{t\rightarrow  0}\varphi_1(t)=c>0$ and in consequence $	\alpha_j \frac{\varphi_1(2^{-j})^\varrho}{\varphi_1(2^{-j})}\sim c>0$. 
\end{proof}

We now deal with the compactness of the embedding \eqref{embed1}.

\begin{theorem} \label{th:comp}
	Let $s_i\in\rr$, $0<p_i<\infty$, $0<q_i\leq \infty$, and $\varphi_i\in {\mathcal G}_{p_i}$, for $i=1,2$. 
	We assume without loss of generality that $\varphi_1(1)=\varphi_2(1)=1$. 
	Let $ \varrho=\min(1,\frac{p_1}{p_2})$ and $\displaystyle\alpha_j= \sup_{0\le\nu\le j}\frac{\varphi_2(2^{-\nu})}{\varphi_1(2^{-\nu})^\varrho}$, $j\in \mathbb{N}_0$. 
	
	There is a compact embedding 
	\begin{equation} \label{embed1a}
		\na(Q) \hookrightarrow \nb(Q) 
	\end{equation}
	if, and only if, 
	
	\begin{align}
		\label{cond2a}
	   \left\{  2^{j(s_2-s_1)} \alpha_j \frac{\varphi_1(2^{-j})^\varrho}{\varphi_1(2^{-j})}\right\}_{j\in \no} & \in \ell_{q^*}  \qquad \text{where}  \quad \frac{1}{q^*}=\left(\frac{1}{q_2}-\frac{1}{q_1}\right)_+ 
        \end{align}
	and
        \begin{align}
	  2^{j(s_2-s_1)} \alpha_j \frac{\varphi_1(2^{-j})^\varrho}{\varphi_1(2^{-j})}\rightarrow 0\qquad \text{if}\qquad q_1\leq q_2. %q^*=\infty .
          \label{cond2b}
		%\qquad \text{where}  \quad %\frac{1}{q^*}=\left(\frac{1}{q_2}-\frac{1}{q_1}\right)_+ 
	\end{align}
\end{theorem}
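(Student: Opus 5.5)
Necessity of \eqref{cond2a} is immediate from Theorem~\ref{th-cont}, since a compact embedding is continuous. Sufficiency for $q_1>q_2$ also looks easy, because then \eqref{cond2b} is not required. So the real work is (a) showing the continuous embedding is automatically compact when $q_1>q_2$, (b) proving sufficiency of \eqref{cond2a}+\eqref{cond2b} when $q_1\le q_2$, and (c) proving necessity of \eqref{cond2b} when $q_1\le q_2$. Throughout I write
\[
\beta_j:= 2^{j(s_2-s_1)} \alpha_j \,\varphi_1(2^{-j})^{\varrho-1}.
\]
The basic tool is the level-wise estimate behind Theorem~\ref{th-cont}. Using the equivalent quasi-norm of Lemma~\ref{equiv-norm}, the level-$j$ block $\lambda^{(j)}=\{\lambda_{j,m}\}_{m}$ satisfies
\begin{equation*}
2^{js_2}\Big\|\sum_m \lambda_{j,m}\chi_{Q_{j,m}}\Big|\mathcal{M}_{\varphi_2,p_2}(\rd)\Big\| \le C\,\beta_j\, 2^{js_1}\Big\|\sum_m \lambda_{j,m}\chi_{Q_{j,m}}\Big|\mathcal{M}_{\varphi_1,p_1}(\rd)\Big\|,
\end{equation*}
with $C$ independent of $j$. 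The constant $\beta_j$ is sharp up to constants, as shown by test sequences supported in a single cube $Q_{\nu,k}$ with $\nu\le j$ (the ones extracted from the proof in \cite{hms22}).

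For sufficiency, the plan is to approximate the embedding by finite-rank operators. Let $P_J$ be the projection keeping only the levels $j\le J$. Because $Q$ is bounded, each level contains only $M_j=2^{jd}|Q|$ coefficients. Hence $P_J$ has finite rank and is bounded from $\na(Q)$ into $\nb(Q)$, so $\id\circ P_J$ is compact. It remains to estimate the tail $\id-P_J$ using the level-wise estimate:
\begin{itemize}
\item If $q_1\le q_2$, then $\ell_{q_1}\hookrightarrow \ell_{q_2}$ gives $\|\id-P_J\|\lesssim \sup_{j>J}\beta_j$, which tends to $0$ by \eqref{cond2b}.
\item If $q_1>q_2$, Hölder's inequality with $1/q_2=1/q^*+1/q_1$ gives $\|\id-P_J\|\lesssim \|\{\beta_j\}_{j>J}\|_{\ell_{q^*}}$, which tends to $0$ since $q^*<\infty$ and \eqref{cond2a} holds.
\end{itemize}
In both cases $\id$ is an operator-norm limit of compact operators, hence compact. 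Since the spaces are only quasi-Banach, I will use the $\min(1,q_2,p_2)$-triangle inequality so that the limit argument stays valid.

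For necessity of \eqref{cond2b} when $q_1\le q_2$, suppose $\beta_j\not\to0$. Choose a subsequence $j_k\to\infty$ with $\beta_{j_k}\ge\delta>0$, and for each $k$ an extremal level-$j_k$ sequence $\lambda^{k}$ normalised in $\na(Q)$ whose image has $\nb(Q)$-norm at least $c\delta$. These live on distinct levels, so for $k\ne l$ the norm $\|\lambda^k-\lambda^l\mid \nb(Q)\|$ is bounded below by the level-$j_k$ part alone, which is $\gtrsim\delta$. Thus no subsequence is Cauchy, contradicting compactness.

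The main obstacle is constructing these extremal single-level sequences uniformly in $j$, because $\beta_j$ involves the supremum over $\nu\le j$ hidden in $\alpha_j$. My approach is as follows:
\begin{itemize}
\item Pick $\nu_j\le j$ nearly attaining $\alpha_j$.
\item If $\varrho=1$, put a single nonzero coefficient at level $j$ when $\alpha_j$ is attained near $\nu=j$; otherwise spread constant coefficients over a whole cube $Q_{\nu_j,k}$.
\item If $\varrho<1$, take the coefficients equal to $1$ on a sparse subset of $Q_{\nu_j,k}$ chosen so that the $\varphi_1$-Morrey norm is dominated by the smallest scale, yielding the factor $\varphi_1(2^{-j})^{\varrho-1}$.
\end{itemize}
These are exactly the sequences used for necessity in Theorem~\ref{th-cont}, and I would verify that the lower bound they provide holds with constants independent of $j$.
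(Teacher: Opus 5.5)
Your sufficiency argument is the paper's Step~1: the level-wise bounds \eqref{s1} and \eqref{s2}, H\"older in $j$, and a finite-dimensional head.

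For the necessity of \eqref{cond2b} you take a genuinely different route. The paper never shows that the constants $\alpha_j\varphi_1(2^{-j})^{\varrho-1}$ in \eqref{s1}, \eqref{s2} are sharp. Instead it fixes $j_k$ with $\beta_{j_k}\to\beta>0$ and splits into cases:
\begin{itemize}
\item whether $\alpha_{j_k}$ stays bounded;
\item whether the supremum defining $\alpha_{j_k}$ is attained at $\nu=j_k$ or at a fixed $j_0$;
\item whether $\varphi_2(2^{-j_k})$ has a positive limit.
\end{itemize}
In most cases it uses single-coefficient test sequences. When $\varrho=1$ and the supremum sits at a fixed $j_0$, it uses a constant block on $Q_{j_0,0}$. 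When $\varrho<1$ and the supremum sits at a fixed $j_0$, it compares with $\widetilde\varphi_2(t)=t^{d/p_2}$ and appeals to the classical construction of \cite{hs13}.

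You replace all of this by one uniform lemma: the level-$j$ constant is attained up to a factor independent of $j$. After that, the distinct-levels argument is immediate and needs no case distinction. Superposing weighted extremal blocks on different levels would also give the necessity half of Theorem~\ref{th-cont} for $q^*<\infty$. The cost is that you must prove the lemma yourself. The paper does not contain it (Theorem~\ref{th-cont} is only referred to \cite{hms22}), so you cannot just invoke ``the sequences used for necessity in Theorem~\ref{th-cont}''.

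For $\varrho=1$, your block $\lambda_{j,m}=\varphi_1(2^{-\nu_j})^{-1}$ on all $Q_{j,m}\subset Q_{\nu_j,k}$ works; it is the paper's sequence in that case.

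For $\varrho<1$, your description is not yet enough. A set that only makes the $\varphi_1$-quantity ``dominated by the smallest scale'' can be far too thin. A single cube, for instance, gives only $\varphi_2(2^{-j})/\varphi_1(2^{-j})$, which is in general much smaller than $\alpha_j\varphi_1(2^{-j})^{\varrho-1}$. The set must \emph{saturate} the $\varphi_1$-constraint at every scale. Put
\[
f(\nu)=\varphi_1(2^{-j})^{p_1}\,\varphi_1(2^{-\nu})^{-p_1}\,2^{(j-\nu)d},\qquad \nu\le j .
\]
Then $f(j)=1$ and $1\le f(\nu)/f(\nu+1)\le 2^d$, because $\varphi_1$ is non-decreasing and $\varphi_1\in{\mathcal G}_{p_1}$.

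Now build the set greedily down the tree. For each active cube of level $\nu$, $\nu_j\le\nu<j$, choose $n_\nu\in\{1,\dots,2^d\}$ active children. This yields a set $E$ of level-$j$ cubes in $Q_{\nu_j,k}$ such that every dyadic cube of level $\nu\in[\nu_j,j]$ contains either none or between $f(\nu)/2$ and $2f(\nu)$ cubes of $E$.

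Take $\lambda_{j,m}=\varphi_1(2^{-j})^{-1}$ on $E$ and zero otherwise.
\begin{itemize}
\item The $\varphi_1$-quantity of Lemma~\ref{equiv-norm} is at most $2^{1/p_1}$ at every scale; for $\nu<\nu_j$ use $f(\nu_j)\le f(\nu)$.
\item At $\nu=\nu_j$ the $\varphi_2$-quantity is at least
$2^{-1/p_2}\varphi_2(2^{-\nu_j})2^{(\nu_j-j)d/p_2}f(\nu_j)^{1/p_2}\varphi_1(2^{-j})^{-1}=2^{-1/p_2}\alpha_j\varphi_1(2^{-j})^{\varrho-1}$, uniformly in $j$.
\end{itemize}
With this lemma supplied, your necessity proof is complete. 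It also handles directly the case the paper delegates to \cite{hs13}.
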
 

\begin{proof} 
	\emph{Step 1 (sufficiency)}.
	If $p_2\le p_1$, i.e., $\varrho=1$,  then  we have the following inequality 
	\begin{align}\label{s1}
		& \mathop{\sup_{\nu: \nu \leq j}}_{k\in \zd} \varphi_2(2^{-\nu}) 2^{(\nu-j)\frac{\nd}{p_2}}\biggl(\!\!\mathop{\sum_{m\in\zd:}}_{Q_{j,m}\subset Q_{\nu, k}}  
		\!\! | \lambda_{j,m}|^{p_2} \biggr)^{\frac{1}{p_2}}  \\
		& \qquad  \qquad \qquad  \qquad \leq  \alpha_j\, 
		\mathop{\sup_{\nu: \nu \leq j}}_{k\in \zd} \varphi_1(2^{-\nu})  2^{(\nu-j)\frac{\nd}{p_1}}\biggl(\!\! \mathop{\sum_{m\in\zd:}}_{Q_{j,m}\subset Q_{\nu, k}}  \!\! 
		| \lambda_{j,m}|^{p_1} \biggr)^{\frac{1}{p_1}}, \nonumber
	\end{align}
	cf. (4.4) in \cite{hms22}. On the other hand, if $p_2> p_1$, i.e., $\varrho<1$,  then 
	\begin{align}\label{s2}
		& \mathop{\sup_{\nu: \nu \leq j}}_{k\in \zd} \varphi_2(2^{-\nu}) 2^{(\nu-j)\frac{\nd}{p_2}}\biggl(\!\! \mathop{\sum_{m\in\zd:}}_{Q_{j,m}\subset Q_{\nu, k}}  \!\!
		| \lambda_{j,m}|^{p_2} \biggr)^{\frac{1}{p_2}}  \\
		& \qquad  \qquad \qquad  \qquad \leq  \alpha_j\, \frac{\varphi_1(2^{-j})^\varrho}{\varphi_1(2^{-j})}
		\mathop{\sup_{\nu: \nu \leq j}}_{k\in \zd} \varphi_1(2^{-\nu})  2^{(\nu-j)\frac{\nd}{p_1}}\biggl(\!\!\mathop{\sum_{m\in\zd:}}_{Q_{j,m}\subset Q_{\nu, k}}  \!\!
		| \lambda_{j,m}|^{p_1} \biggr)^{\frac{1}{p_1}},  \nonumber
	\end{align}
    cf. (4.5) in \cite{hms22}. In both cases we get, for a $j_0\in\no$, 
    \begin{align}
    	\Bigg(\sum_{j=j_0}^\infty 2^{j {s_2}q_2} 
    	\mathop{\sup_{\nu: \nu \leq j}}_{k\in \zd}\!  \varphi_2(2^{-\nu})^{q_2} \,2^{(\nu-j)\frac{\nd}{p_2} q_2}\Big(\!\! \mathop{\sum_{m\in\zd:}}_{Q_{j,m}\subset Q_{\nu,k}\subset Q}\!\!|\lambda_{j,m}|^p\Big)^{\frac{ q_2}{p_2}}\Bigg)^{1/q_2} \le  \\
           \Bigg(\sum_{j=j_0}^\infty   2^{j(s_2-s_1)q^*} \alpha_j^{q^*}  \varphi_1(2^{-j})^{(\varrho-1)q^*} \Bigg)^{1/q^*} \| \lambda \mid n^{s_1}_{\varphi_1,p_1,q_1}(Q)\|^\ast         	
    \end{align}
    with the usual modification if $q^*=\infty$. If conditions \eqref{cond2a}-\eqref{cond2b} hold, then for any $\varepsilon >0$ we find $j_0$ such that 
    \[ \Bigg(\sum_{j=j_0}^\infty   2^{j(s_2-s_1)q^*} \alpha_j^{q^*}  \varphi_1(2^{-j})^{(\varrho-1)q^*} \Bigg)^{1/q^*} < \varepsilon .\]
    Moreover, the space 
    	$$
    \widetilde{n}^{s_2}_{\varphi_2,p_2,q_2}(Q)= \{\lambda=\{\lambda_{j,m}\}_{j,m} \in n^{s_2}_{\varphi_2,p_2,q_2}(Q):  \lambda_{j,m}=0 \quad \text{if}\quad j\geq j_0 \}
    $$
    is finite-dimensional, so the unit ball of $n^{s_1}_{\varphi_1,p_1,q_1}(Q)$ is precompact in $n^{s_2}_{\varphi_2,p_2,q_2}(Q)$. \\

	\emph{Step 2 (necessity)}. If $q^*<\infty$, then the necessity of the condition \eqref{cond2a} follows from Theorem~\ref{th-cont}, since otherwise the embedding is not continuous. So it remains to prove the necessity of \eqref{cond2b} if $q^*=\infty$.  
	
    If $\big\{2^{j(s_2-s_1)} \alpha_j \frac{\varphi_1(2^{-j})^\varrho}{\varphi_1(2^{-j})}\big\} \in \ell_{\infty}\setminus c_0$, then there exists a subsequence of positive integers  $j_k$, $k\in \nat$, such that
    \begin{equation}\label{n1}
     2^{j_k(s_2-s_1)} \alpha_{j_k} \frac{\varphi_1(2^{-j_k})^\varrho}{\varphi_1(2^{-j_k})}\rightarrow \beta>0 \quad \text{if} \quad k\rightarrow \infty .
    \end{equation}
     We consider a few cases. Since the sequence $\{\alpha_{j_k}\}_{k \in \nat}$ is an increasing sequence of positive numbers, then it is convergent to a positive limit $\alpha$ or it is divergent to $\infty$.  \\
     
     \emph{Substep 2.1}. First, we assume that $\alpha_{j_k}\rightarrow \alpha$ as $k \rightarrow \infty$. Then \eqref{n1} is equivalent to 
     \begin{equation}\label{n2}
     	2^{j_k(s_2-s_1)} 	\frac{\varphi_1(2^{-j_k})^\varrho}{\varphi_1(2^{-j_k})}\rightarrow \beta_0>0 \quad \text{if} \quad k\rightarrow \infty .
     \end{equation}
     Note that the sequences $\{\varphi_1(2^{-j_k})\}_{k \in \nat}$ and $\{\varphi_2(2^{-j_k})\}_{k\in\nat}$ are decreasing. Assume first that $\displaystyle \lim_{k\rightarrow \infty}  \varphi_2(2^{-j_k})=\beta_2>0$. 
     %If $\displaystyle \lim_{k\rightarrow \infty}  \varphi_2(2^{-j_k})=\beta_2>0$, then we also have $\displaystyle\lim_{k\rightarrow \infty}  \varphi_1(2^{-j_k})=\beta_1>0$, since $\alpha_{j_k}\rightarrow \alpha$ as $k \rightarrow \infty$.
     Let us define a sequence 
     $\lambda^{(k)}= \{\lambda^{(k)}_{j,m}\}$ by
     \begin{equation}\label{n4}
     	\lambda^{(k)}_{j,m}= \begin{cases}
     	2^{-j_ks_1}\varphi_1(2^{-j_k})^{-1}& \quad j=j_k \quad\text{and}\quad m=0,\\
     	0 & \quad \text{otherwise}.
     \end{cases} 
 \end{equation} 
 Then $\|\lambda^{(k)}\mid n^{s_1}_{\varphi_1,p_1,q_1}(Q)\|^\ast=1$. Moreover,  {when $k_1\neq k_2$},
 \[
 \|\lambda^{(k_1)}-\lambda^{(k_2)}\mid n^{s_2}_{\varphi_2,p_2,q_2}(Q)\|^\ast \gtrsim 2^{j_{k_1}(s_2-s_1)}\varphi_1(2^{-j_{k_1}})^{\varrho-1} \beta_2 + 2^{j_{k_2}(s_2-s_1)}\varphi_1(2^{-j_{k_2}})^{\varrho-1} \beta_2.
 \]  
 Using \eqref{n2}, and $k_1\not= k_2$ are sufficiently large, then
\[
 \|\lambda^{(k_1)}-\lambda^{(k_2)}\mid n^{s_2}_{\varphi_2,p_2,q_2}(Q)\|^\ast \gtrsim \beta_2 \beta_0 >0,
 \]
 which allows us to conclude that the embedding is not compact. \\

 Consider now the case when $\displaystyle \lim_{k\rightarrow \infty}  \varphi_2(2^{-j_k})= 0$. 
 First we assume that $\alpha_{j_k}<\alpha$ for any $k\in \nat$. Then, for some subsequence $k_\ell$, we have $\alpha_{j_{k_\ell}}= \frac{\varphi_2(2^{-{j_{k_\ell}}})}{\varphi_1(2^{-{j_{k_\ell}}})^\varrho}$. For simplicity, we assume that the last identity is satisfied for all $j_k$. Now from \eqref{n1} we get 
 \begin{equation}\label{n6}
 	2^{j_ks_2}\varphi_2(2^{-j_k}) \sim 2^{j_k s_1}\varphi_1(2^{-j_k})
 \end{equation} 
for sufficiently large $k$. 
We put 
\begin{equation}\label{n5}	
\lambda^{(k)}_{j,m}= \begin{cases}
	2^{-j_ks_2}\varphi_2(2^{-j_k})^{-1}& \quad j=j_k \quad\text{and}\quad m=0,\\
	0 & \quad \text{otherwise}.
\end{cases}
\end{equation}  
Then, for $k$ sufficiently large, 
\[ \|\lambda^{(k)}\mid n^{s_1}_{\varphi_1,p_1,q_1}(Q)\|^\ast= 2^{j_k(s_1-s_2)}\frac{\varphi_1(2^{-j_k})}{\varphi_2(2^{-j_k})}\le C<\infty.
 %\frac{\alpha^{-1}}{2} 
\]
Furthermore, if $k_1\neq k_2$ are sufficiently large, 
\[ \|\lambda^{(k_1)}-\lambda^{(k_2)}\mid n^{s_2}_{\varphi_2,p_2,q_2}(Q)\|^\ast \ge 1.\]  
The same conclusion as above applies.  
%%%%%%%%%%%%%%%

%%%%%%%%%%%%%%%%%%%%%%%%%%%%
It remains to consider the case $\alpha=  \frac{\varphi_2(2^{-{j_0}})}{\varphi_1(2^{-{j_0}})^\varrho}$, for some $j_0\in \nat$. Then,
\begin{equation}\label{n8}
	\alpha= \frac{\varphi_2(2^{-{j_0}})}{\varphi_1(2^{-{j_0}})^\varrho} \ge  \frac{\varphi_2(2^{-{j_{k}}})}{\varphi_1(2^{-{j_{k}}})^\varrho}\qquad \text{if}\qquad j_k\ge j_0.
	\end{equation}
For simplicity, we assume that the last identity is satisfied for all $j_k$. Now from \eqref{n2} we get 
\begin{equation}\label{n7}
	2^{j_ks_2}\varphi_1(2^{-j_k})^\varrho \sim 2^{j_k s_1}\varphi_1(2^{-j_k})
\end{equation} 
for sufficiently large $k$. 
%%%%
Let $\varrho=1$.  It follows from \eqref{n7} that $s_1=s_2$. 
We consider the following sequence
\[
\lambda^{(k)}_{j,m}= \begin{cases}
	2^{-j_ks_2}\varphi_1(2^{-j_0})^{-1}& \quad j=j_k  \quad\text{and}\quad Q_{j_k,m}\subset Q_{j_0,0},\\
	0 & \quad \text{otherwise}.
\end{cases}\]  
Then 
\begin{align*}
	2^{j_ks_2}	\mathop{\sup_{\nu: \nu \leq {j_k}}}_{k\in \zd}\!  \varphi_2(2^{-\nu}) \,2^{(\nu-j_k)\frac{\nd}{p_2}}\Big(\!\! \mathop{\sum_{m\in\zd:}}_{Q_{j_k,m}\subset Q_{\nu,k}\subset Q_{j_0,0}}\!\!|\lambda^{(k)}_{j_k,m}|^{p_2}\Big)^{\frac {1}{ p_2}}= 
	\varphi_1(2^{-j_0})^{-1} \mathop{\sup_{\nu: j_0\le\nu \leq {j_k}}}_{k\in \zd}\!  \varphi_2(2^{-\nu}) \, %2^{(\nu-j_k)\frac{\nd}{p}}\Big(\!\! \mathop{\sum_{m\in\zd:}}_{Q_{j_k,m}\subset Q_{\nu,k}\subset Q_{j_0,0}}\!\!|\lambda^{(k)}_{j_k,m}|^p\Big)^{\frac 1 p}
	= \alpha %\varphi_2(2^{-j_0})\varphi_1(2^{-j_0})^{-1}
\end{align*}
and, similarly, 
\begin{align*}
	2^{j_ks_1}	\mathop{\sup_{\nu: \nu \leq {j_k}}}_{k\in \zd}\!  \varphi_1(2^{-\nu}) \,2^{(\nu-j_k)\frac{\nd}{p_1}}\Big(\!\! \mathop{\sum_{m\in\zd:}}_{Q_{j_k,m}\subset Q_{\nu,k}\subset Q_{j_0,0}}\!\!|\lambda^{(k)}_{j_k,m}|^{p_1}\Big)^{\frac {1}{ p_1}}\le
	\varphi_1(2^{-j_0})^{-1} \mathop{\sup_{\nu: j_0\le\nu \leq {j_k}}}_{k\in \zd}\!  \varphi_1(2^{-\nu}) \, %2^{(\nu-j_k)\frac{\nd}{p}}\Big(\!\! \mathop{\sum_{m\in\zd:}}_{Q_{j_k,m}\subset Q_{\nu,k}\subset Q_{j_0,0}}\!\!|\lambda^{(k)}_{j_k,m}|^p\Big)^{\frac 1 p}
	= 1. 
\end{align*}
Therefore, we get 
\[ \|\lambda^{(k_1)}-\lambda^{(k_2)}\mid n^{s_2}_{\varphi_2,p_2,q_2}(Q)\|^\ast\ge c>0\quad \text{if} \quad k_1\not= k_2 \]  
and 
\[ \|\lambda^{(k)}\mid n^{s_1}_{\varphi_1,p_1,q_1}(Q)\|^\ast = 1 %\frac{\varphi_1(2^{-j_k})}{\varphi_2(2^{-j_k})}\le 1 
\]
if $k$ is sufficiently large. 
%%%%%%

Let us now consider the case when $\varrho<1$, i.e., when $p_1<p_2$. We put $\widetilde{\varphi}_2(t)= t^{d/p_2}$. Then $\widetilde{\varphi}_2(t)\le {\varphi}_2(t)$ if $0<t\le 1$, since $\varphi_2\in \mathcal{G}_{p_2}$ and $\varphi_2(1)=1$. It follows from Theorem~\ref{th-cont} that the embedding 
\[ n^{s_2}_{\varphi_2,p_2,q_2}(Q)  \hookrightarrow  n^{s_2}_{\widetilde{\varphi}_2,p_2,q_2}(Q) \] 
is continuous. Consequently, if the embedding \eqref{embed1a} is compact, then the embedding 
\begin{equation} \label{n9}
	n^{s_1}_{\varphi_1,p_1,q_1}(Q)  \hookrightarrow  n^{s_2}_{\widetilde{\varphi}_2,p_2, q_2}(Q)
\end{equation}  
is compact, too. We prove that the last embedding is not compact if \eqref{n2} and \eqref{n8} hold. It follows from \eqref{n7} 
that the sequence $\{\varphi_1(2^{-j_k})\}_k$ is of polynomial growth, more precisely 
\begin{equation}
	\varphi_1(2^{-j_k}) \sim 2^{-j_k\frac{s_1-s_2}{1-\varrho}}.
\end{equation}    
Therefore, we can reduce this case to the problem of compactness of an embedding of  classical Besov-Morrey spaces on domains, cf. \cite{hs13}.  We have 
\begin{equation}
	\frac{s_1-s_2}{d} \le \frac{1}{p_1}-\frac{1}{p_2}
\end{equation}
since $\varphi_1\in \mathcal{G}_{p_1}$. 
We put  $\frac{s_1-s_2}{1-\varrho}=\frac{d}{u_1}$.  Then 
\[ \frac{p_1}{u_1}\bigg(\frac{1}{p_1}-\frac{1}{p_2}\bigg)= \frac{s_1-s_2}{d}\ge \max\left(0, \frac{1}{u_1}-\frac{1}{p_2}\right) \] 
Now we can deal in the same way as in Substep 2.3 of Theorem~4.1 in \cite{hs13}. Namely, we can construct a sequence $\lambda^{(k)}= \{\lambda_{j_k,m}^{(k)}\} \in 	n^{s_1}_{\varphi_1,p_1,q_1}(Q)$ bounded in the space $n^{s_1}_{\varphi_1,p_1,q_1}(Q)$ such that 
\[\| \lambda^{(k_1)}-\lambda^{(k_2)} \mid	n^{s_2}_{\widetilde\varphi_2,p_2,q_2}(Q)\|\ge c>0 \qquad \text{if}\qquad q_1\not= q_2. \] 
So the embedding \eqref{n9} is not compact, and thereafter neither the embedding \eqref{embed1a}. \\

\emph{Substep 2.2}.  Lastly, we consider the case when $\alpha_{j_k}\rightarrow \infty$ as $k \rightarrow \infty$. Passing to a subsequence, we may assume that 
$\alpha_{j_k}= \frac{\varphi_2(2^{-j_k})}{\varphi_1(2^{-j_k})^\varrho}$. 
Once more \eqref{n1} implies \eqref{n6} and we can use the sequence defined in \eqref{n5} to show that 
\[
\|\lambda^{(k)}\mid n^{s_1}_{\varphi_1,p_1,q_1}(Q)\|\le C<\infty \quad\text{and}\quad 
\|\lambda^{(k_1)}-\lambda^{(k_2)}\mid n^{s_2}_{\varphi_2,p_2,q_2}(Q)\| \ge c>0
\]   
if $k, k_1, k_2$, with $k_1\not= k_2$, are sufficiently large.  
\end{proof}

\remark{\label{n-n-class}
    If $\varphi_i(t)\sim t^{\nd/u_i}$, $0<p_i\leq u_i<\infty$, and $\mathcal{A}^{s_i}_{\varphi_i,p_i,q_i}=\mathcal{N}^{s_i}_{\varphi_i,p_i,q_i}$, $i=1,2$, then Theorem~\ref{th:comp} coincides with the sequence space version of Corollary~\ref{comp-class}(i), since $\alpha_j\sim 2^{j\nd (\frac{\varrho}{u_1}-\frac{1}{u_2})_+}$, $j\in\no$.
}

	\begin{corollary}
		If $s_1=s_2$, then the embedding \eqref{embed1} is never compact.  
	\end{corollary}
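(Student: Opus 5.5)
We argue by contradiction, using Theorem~\ref{th:comp}. Suppose $s_1=s_2$ and the embedding \eqref{embed1a} is compact. Then it is in particular continuous, so \eqref{cond2a} holds. We show that the sequence
\[
\gamma_j:=\alpha_j\,\varphi_1(2^{-j})^{\varrho-1},\qquad j\in\no,
\]
which is exactly the sequence in \eqref{cond2a}--\eqref{cond2b} when $s_1=s_2$, is bounded from below by a positive constant. This contradicts both conditions at once: a sequence bounded away from zero lies in no $\ell_{q^*}$ with $q^*<\infty$, and it does not tend to zero, which rules out \eqref{cond2b} in the case $q_1\le q_2$.

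For the lower bound, we use the definition of $\alpha_j$ together with the normalisation $\varphi_1(1)=\varphi_2(1)=1$. Taking $\nu=0$ in the supremum gives $\alpha_j\ge \varphi_2(1)\varphi_1(1)^{-\varrho}=1$ for every $j\in\no$. Next, $\varphi_1\in\mathcal G_{p_1}$ is non-decreasing, so $\varphi_1(2^{-j})\le\varphi_1(1)=1$. Since $\varrho\le1$, the exponent $\varrho-1$ is non-positive, and therefore $\varphi_1(2^{-j})^{\varrho-1}\ge1$. This is precisely inequality \eqref{eq1} from the proof of Corollary~\ref{cor1}. Combining the two estimates yields $\gamma_j\ge1$ for all $j$.

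It then remains to split into the cases of Theorem~\ref{th:comp}. If $q_1>q_2$, then $q^*<\infty$, and \eqref{cond2a} would force $\sum_j\gamma_j^{q^*}<\infty$, which is impossible since every term is at least $1$. If $q_1\le q_2$, then \eqref{cond2b} would require $\gamma_j\to0$, which is again impossible. In both cases the embedding fails to be compact.

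There is no real obstacle in this argument. The only point needing care is that we may indeed assume $\varphi_i(1)=1$, as the theorem does. This normalisation only rescales the quasi-norms by constants and so does not affect compactness. Everything else follows directly from Theorem~\ref{th:comp}.
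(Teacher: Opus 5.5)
Your proof is correct and rests on the same key fact as the paper's proof. The sequence $\alpha_j\varphi_1(2^{-j})^{\varrho-1}$ from Theorem~\ref{th:comp} satisfies $\alpha_j\varphi_1(2^{-j})^{\varrho-1}\ge\alpha_j\ge 1$ (cf.\ \eqref{eq1}), so it is never a null sequence and \eqref{cond2b} fails. The only difference is that the paper first uses Corollaries~\ref{cor2} and~\ref{cor2b} to reduce to $q^*=\infty$, whereas you dispose of the case $q^*<\infty$ directly with the same lower bound, which is slightly more self-contained.
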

	\begin{proof}
		If $s_1=s_2$ and the embedding \eqref{embed1} holds, then $q^*=\infty$ and 
		$\alpha_j\frac{\varphi_1(2^{-j})^\varrho}{\varphi_1(2^{-j})}\sim \alpha_j$, cf. Corollary~\ref{cor2} and Corollary~\ref{cor2b}. But $\alpha_j$ is not a null sequence so \eqref{cond2b} is not satisfied.   
	\end{proof}

We have also the following counterparts of Corollaries~\ref{cor2a} and \ref{cor2b}.

	\begin{corollary}\label{nbbncomp}
	The embedding \eqref{embednb} is compact if,  and only if, \eqref{embednba} holds with $c_0$ instead of $\ell_{\infty}$ if {$q_1\leq q_2$}.  
	
	The embedding \eqref{embedbn} is compact if,  and only if, $s_1>s_2$.   	  
\end{corollary}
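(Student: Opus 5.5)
Both statements will be reduced to Theorem~\ref{th:comp}, in the same way as Corollaries~\ref{cor2a} and \ref{cor2b} were reduced to Theorem~\ref{th-cont}. The tool is Remark~\ref{rem-specialcase}. If $\psi\in\mathcal{G}_{p_0}$ for some $0<p_0<\infty$, with $\psi(1)=1$ and $\lim_{t\to0}\psi(t)=c>0$, then $n^{s}_{\psi,p_0,q}(Q)=\ell_q(2^{js}\ell_\infty^{2^{jd}|Q|})$ with equivalent quasi-norms. A natural choice is $\psi\equiv 1$. Since compactness is preserved under passing to equivalent quasi-norms, each embedding in the corollary is compact if, and only if, the corresponding embedding of generalised Besov-Morrey sequence spaces is compact. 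We then only need to evaluate the quantity in \eqref{cond2a}--\eqref{cond2b} in the two special settings.

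For \eqref{embednb}, take $\varphi_1=\varphi$, $p_1=p$, and $\varphi_2\equiv1$, $p_2=p_0$. We compute $\alpha_j=\sup_{0\le\nu\le j}\varphi(2^{-\nu})^{-\varrho}=\varphi(2^{-j})^{-\varrho}$, because $\varphi$ is non-decreasing and $2^{-\nu}\ge 2^{-j}$. Hence
\[
2^{j(s_2-s_1)}\alpha_j\frac{\varphi(2^{-j})^\varrho}{\varphi(2^{-j})}=2^{j(s_2-s_1)}\varphi(2^{-j})^{-1},
\]
and the answer does not depend on the auxiliary $p_0$. By Theorem~\ref{th:comp}, compactness is equivalent to this sequence lying in $\ell_{q^*}$ and, if $q_1\le q_2$, tending to $0$. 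When $q_1>q_2$ we have $q^*<\infty$, so the condition is exactly \eqref{embednba}. When $q_1\le q_2$ we have $q^*=\infty$, and membership in $\ell_\infty$ together with convergence to $0$ is the same as membership in $c_0$.

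For \eqref{embedbn}, take $\varphi_1\equiv1$, $p_1=p_0$, and $\varphi_2=\varphi$, $p_2=p$. Then $\varphi_1(2^{-j})^{\varrho-1}=1$ and $\alpha_j=\sup_{\nu\le j}\varphi(2^{-\nu})=\varphi(1)=1$. So the relevant sequence is simply $2^{j(s_2-s_1)}$.

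If $q_1\le q_2$, compactness requires $2^{j(s_2-s_1)}\to0$, that is, $s_1>s_2$. If $q_1>q_2$, compactness requires $\{2^{j(s_2-s_1)}\}\in\ell_{q^*}$ with $q^*<\infty$, which again holds if, and only if, $s_1>s_2$. In both cases compactness is equivalent to $s_1>s_2$.

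The only delicate point is the identification via Remark~\ref{rem-specialcase}: the unweighted $\ell_\infty$ space has to be realised as some $n^{s}_{\psi,p_0,q}(Q)$ with $\psi$ in an admissible class $\mathcal{G}_{p_0}$. This is checked directly for $\psi\equiv1$, since a constant function satisfies \eqref{Gp-def} for every $p_0$. The conclusion does not depend on the choice of $p_0$, so the application of Theorem~\ref{th:comp} is legitimate.
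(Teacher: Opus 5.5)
Your proposal is correct and follows the paper's own route. The paper gives no separate proof and presents the corollary as the counterpart of Corollaries~\ref{cor2a} and \ref{cor2b}: one identifies the $\ell_\infty$-type space with some $n^{s}_{\psi,p_0,q}(Q)$, $\lim_{t\to0}\psi(t)>0$, via Remark~\ref{rem-specialcase}, and then evaluates the sequence in Theorem~\ref{th:comp}. Your explicit choice $\psi\equiv1$ even gives $\alpha_j=\varphi(2^{-j})^{-\varrho}$ and $\alpha_j=1$ exactly, rather than only up to equivalence as in the paper's proof of Corollary~\ref{cor2a}.
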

\bigskip~

Now we deal with the sequence spaces $\bp(Q)$, defined in analogy to \eqref{seq-besov-Om} based on $\bp(\rd)$, as the set of all double-indexed sequences 
$\lambda:=\{\lambda_{j,m}\} \subset \cc$, with $j\in\no, m \in\zd$ such that $Q_{j,m} \subset Q$,  for which the quasi-norm  
\begin{equation} \label{seq-bphi-Om}
	\| \lambda\mid \bp(Q)\|:= \sup_{P: |P|\leq 1} \frac{\varphi(\ell(P))}{|P|^{1/p}}  \Biggl(\sum_{j=j_P}^{\infty}2^{j(s-\frac{\nd}{p})q}   \Bigl(\sum_{m \in \zd: Q_{j,m}\subset P} |\lambda_{j,m}|^p\Bigr)^{\frac{q}{p}} \Biggr)^{1/q} 
\end{equation}
is finite, with the usual modification for $q=\infty$. Again, $\bp(Q)$ can be interpreted as a subspace of $\bp(\rd)$. First we study the continuity of embeddings and begin with the case $p_1\geq p_2$.

\begin{proposition}\label{P-bp-cont}
Let $s_i\in\rr$, $0<p_2\le p_1<\infty$, $0<q_i\leq \infty$, and $\varphi_i\in {\mathcal G}_{p_i}$, for $i=1,2$. 
We assume without loss of generality that $\varphi_1(1)=\varphi_2(1)=1$. 
%Let $ \varrho=\min(1,\frac{p_1}{p_2})$. 
% and $\displaystyle\alpha_j= \sup_{0\le\nu\le %j}\frac{\varphi_2(2^{-\nu})}{\varphi_1(2^{-\nu})^\varrho}$, $j\in \mathbb{N}_0$. 

There is a continuous embedding 
\begin{equation} \label{embed1-bp}
	\bpa(Q) \hookrightarrow \bpb(Q) 
\end{equation}
if, and only if, 
\begin{align}
	\label{cond2-bp}
\left\{2^{j(s_2-s_1)} \frac{\varphi_2(2^{-j})}{\varphi_1(2^{-j})}\right\}_{j\in\nat} \in \ell_\infty
\end{align}
and 
\begin{align}
	\label{cond3-bp}
s_1>s_2 \qquad \text{or}\qquad s_1=s_2 \quad \text{and}\quad q_1\leq q_2.
\end{align}
\end{proposition}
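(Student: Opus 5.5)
Sufficiency and necessity are handled separately, working directly with the quasi-norm \eqref{seq-bphi-Om}.

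\emph{Sufficiency.} Fix a dyadic cube $P\subset Q$ with $|P|\le1$ and $j\ge j_P$. Since $p_2\le p_1$, Hölder's inequality over the $2^{(j-j_P)d}$ cubes $Q_{j,m}\subset P$ gives
\[
\Bigl(\sum_{Q_{j,m}\subset P}|\lambda_{j,m}|^{p_2}\Bigr)^{1/p_2}\le 2^{(j-j_P)d(\frac1{p_2}-\frac1{p_1})}\Bigl(\sum_{Q_{j,m}\subset P}|\lambda_{j,m}|^{p_1}\Bigr)^{1/p_1}.
\]
Multiplying by $2^{j(s_2-d/p_2)}\varphi_2(\ell(P))|P|^{-1/p_2}$, the powers of $2$ in $d/p_i$ cancel exactly. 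This yields the $j$-th term for the first space multiplied by $2^{j(s_2-s_1)}\varphi_2(2^{-j_P})/\varphi_1(2^{-j_P})$.

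Since $j\ge j_P$ and $s_1\ge s_2$ by \eqref{cond3-bp}, we have $2^{j(s_2-s_1)}\le 2^{j_P(s_2-s_1)}$. Hence this factor is bounded by $2^{(j-j_P)(s_2-s_1)}\cdot C$ using \eqref{cond2-bp} at level $j_P$; for $j_P\le 0$ one uses $\varphi_i(1)=1$ and monotonicity, so only finitely many large cubes are involved and the quotient is bounded because $Q$ is fixed. If $s_1>s_2$, the geometric factor $2^{(j-j_P)(s_2-s_1)}$ allows passing from $\ell_{q_1}$ to $\ell_{q_2}$ for any $q_1,q_2$, via Hölder for $q_2<q_1$. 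If $s_1=s_2$ and $q_1\le q_2$, we use $\ell_{q_1}\hookrightarrow\ell_{q_2}$. Taking the supremum over $P$ proves \eqref{embed1-bp}.

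\emph{Necessity.} This uses extremal test sequences.

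First, take a single nonzero entry $\lambda_{j,0}=1$ at level $j$ with $Q_{j,0}\subset Q$. Computing its norm in $b^{s,\varphi}_{p,q}(Q)$, the supremum over $P\supset Q_{j,0}$ of $\varphi(\ell(P))|P|^{-1/p}2^{j(s-d/p)}$ is attained, up to constants, at $P=Q_{j,0}$, because $\varphi\in\Gp$ makes $t^{-d/p}\varphi(t)$ nonincreasing. So the norm is $\sim 2^{js}\varphi(2^{-j})$, and continuity forces \eqref{cond2-bp}.

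Second, to obtain \eqref{cond3-bp}, suppose $s_1<s_2$, or $s_1=s_2$ and $q_1>q_2$. Take the sequence $\lambda_{j,0}=2^{-js_1}\varphi_1(2^{-j})^{-1}a_j$ placed at nested corner cubes $Q_{j,0}$, with a scalar sequence $(a_j)$ chosen so that $(a_j)\in\ell_{q_1}\setminus\ell_{q_2}$ when $s_1=s_2$, and $a_j$ a single spike otherwise. The case $s_1<s_2$ is already excluded, since together with \eqref{cond2-bp} it would need $\varphi_2/\varphi_1$ to decay exponentially. Here the main obstacle appears: one must estimate the $\bpa$ norm from above, with the supremum over all $P$ containing several of the $Q_{j,0}$. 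I would try the variant $\lambda_{j,m}=2^{-j(s_1-d/p_1)}a_j$ for all $Q_{j,m}\subset P_0$, with $P_0=Q_{0,0}$ fixed. With this choice, the supremum is dominated by $P=P_0$ (using $\varphi\in\Gp$), so the first norm is $\sim\|a\mid\ell_{q_1}\|$ while the second is $\gtrsim\|a\mid\ell_{q_2}\|$, which gives the contradiction. I expect the verification that smaller cubes $P$ do not increase the first norm to be the delicate step. It should follow from $\varphi_1(\ell(P))|P|^{-1/p_1}\lesssim \varphi_1(1)\cdot 1$ failing in the wrong direction, so that this part may require instead Corollary~\ref{cor1}-type monotonicity arguments comparing with $P_0$ after restricting to $j\ge j_P$.
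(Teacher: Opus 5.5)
Your sufficiency argument and your single-spike proof that \eqref{cond2-bp} is necessary are correct and agree with the paper. The remark on $j_P\le 0$ is moot, since only cubes with $|P|\le 1$ enter \eqref{seq-bphi-Om}. The necessity of \eqref{cond3-bp}, however, is not established.

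Your claim that $s_1<s_2$ is already excluded by \eqref{cond2-bp} is false, because $\varphi_2/\varphi_1$ may decay exponentially. For instance, take $p_1=p_2=p$, $\varphi_1(t)=\max(1,t^{d/p})\in{\mathcal G}_p$ (so $\varphi_1\equiv 1$ on $(0,1]$) and $\varphi_2(t)=t^{d/p}$. Then \eqref{cond2-bp} only says $s_2\le s_1+\frac dp$. Likewise, for $\varphi_i(t)=t^{d(\frac1p-\tau_i)}$ with $\tau_1>\tau_2$, it only gives $s_2\le s_1+d(\tau_1-\tau_2)$, whereas the continuity criterion recalled in Remark~\ref{cont-special-B} (that is, \eqref{tau-comp-u2'} including limiting cases) requires $s_2\le s_1$. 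This is exactly why \eqref{cond3-bp} is a separate condition. Single spikes can never detect it, since they only reproduce \eqref{cond2-bp}.

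Your remaining test sequence is wrongly normalised, and this is what makes the step you flagged as delicate look problematic. If $\lambda_{j,m}=2^{-j(s_1-\frac d{p_1})}a_j$ for all $Q_{j,m}\subset Q_{0,0}$, a cube $P$ contributes $\varphi_1(\ell(P))\bigl(\sum_{j\ge j_P}(2^{jd/p_1}|a_j|)^{q_1}\bigr)^{1/q_1}$. That is not comparable to $\|a\mid\ell_{q_1}\|$.

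The missing observation is this: when all $2^{(j-j_P)d}$ cubes of level $j$ inside $P$ carry the same value, this count, the factor $2^{-jd/p}$ and $|P|^{-1/p}$ cancel exactly. So take $\lambda_{j,m}=2^{-js_1}a_j$ for all $Q_{j,m}\subset Q$, with $a_j\ge 0$. The contribution of $P$ to the $b^{s,\varphi}_{p,q}(Q)$-quasi-norm is then $\varphi(\ell(P))\bigl(\sum_{j\ge j_P}2^{j(s-s_1)q}a_j^q\bigr)^{1/q}$. Since $\varphi(\ell(P))\le\varphi(1)=1$ and tail sums decrease, the supremum is attained for $j_P=0$. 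Hence the quasi-norm equals $\|\{2^{j(s-s_1)}a_j\}_j\mid\ell_q\|$, independently of $\varphi$ and $p$.

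Choosing $a\in\ell_{q_1}\setminus\ell_{q_2}$ disposes of the case $s_1=s_2$, $q_1>q_2$. Choosing $a_j=2^{j(s_1-t)}$ with $s_1<t<s_2$, i.e. $\lambda_{j,m}=2^{-jt}$, gives a finite $\bpa(Q)$-quasi-norm but an infinite $\bpb(Q)$-quasi-norm, which excludes $s_1<s_2$. This is precisely the paper's argument for \eqref{cond3-bp}.
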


\begin{proof}  	
We adapt arguments from the proofs of \cite[Theorem~4.1]{hms22} and \cite{GHS-23}. 

  {\em Step 1}.   { We prove the sufficiency of the conditions \eqref{cond2-bp} and \eqref{cond3-bp}.} In view of $p_1\geq p_2$ we can conclude from H\"older's inequality that
  \[
    \left(\sum_{Q_{j,m}\subset P} |\lambda_{j,m}|^{p_2}\right)^{\frac{1}{p_2}} \leq 2^{\nd(j-j_P)(\frac{1}{p_2}-\frac{1}{p_1})}\left(\sum_{Q_{j,m}\subset P} |\lambda_{j,m}|^{p_1}\right)^{\frac{1}{p_1}} 
    \]
which leads to 
\begin{align*}
& \left(\sum_{j=j_P}^\infty 2^{j(s_2-\frac{\nd}{p_2})q_2} \left(\sum_{Q_{j,m}\subset P}
  |\lambda_{j,m}|^{p_2}\right)^{\frac{q_2}{p_2}} \right)^{\frac{1}{q_2}}    \leq \\  &\qquad\quad 2^{j_P(s_2-s_1 -\frac{d}{p_2}+\frac{d}{p_1})} 
  \left(\sum_{j=0}^\infty 2^{j(s_2-s_1)q^*} \right)^\frac{1}{q^*}\,   \left(\sum_{j=j_P}^\infty 2^{j(s_1-\frac{\nd}{p_1})q_1} \left(\sum_{Q_{j,m}\subset P}
    |\lambda_{j,m}|^{p_1}\right)^{\frac{q_1}{p_1}}\right)^{\frac{1}{q_1}} ,
\end{align*}  
where $\frac{1}{q^*}= (\frac{1}{q_2}-\frac{1}{q_1})_+$.  Here we applied \eqref{cond3-bp}. 
Using the $\Gp$-condition \eqref{Gp-def}, we note that
\[
  \frac{\varphi_2(\ell(P))}{|P|^{1/p_2}} = { 2^{j_P \nd(\frac{1}{p_2}-\frac{1}{p_1})}} \frac{\varphi_2(\ell(P))}{\varphi_1(\ell(P))} \frac{\varphi_1(\ell(P))}{|P|^{1/p_1}},
  \]
  such that
  \begin{align*}
%\lefteqn{
	\frac{\varphi_2(\ell(P))}{|P|^{1/p_2}} & \left(\sum_{j=j_P}^\infty  2^{j(s_2-\frac{\nd}{p_2})q_2} \left(\sum_{Q_{j,m}\subset P}
  |\lambda_{j,m}|^{p_2}\right)^{\frac{q_2}{p_2}}\right)^{\frac{1}{q_2}}\\
  & \leq {2^{j_P (s_2-s_1)}} \frac{\varphi_2(\ell(P))}{\varphi_1(\ell(P))} \frac{\varphi_1(\ell(P))}{|P|^{1/p_1}} \left(\sum_{j=j_P}^\infty  2^{j(s_1-\frac{\nd}{p_1})q_1} \left(\sum_{Q_{j,m}\subset P}
    |\lambda_{j,m}|^{p_1}\right)^{\frac{q_1}{p_1}}\right)^{\frac{1}{q_1}}\\
    &\leq   \|\lambda | \bpa(Q)\| \sup_{P: |P|\leq 1} {2^{j_P (s_2-s_1)}} \frac{\varphi_2(\ell(P))}{\varphi_1(\ell(P))} \leq c\ \|\lambda | \bpa(Q)\| 
\end{align*}  
in view of \eqref{cond2-bp}. This proves \eqref{embed1-bp}.% in case of $p_1\geq p_2$.
\\

{\em Step 2}.   We prove the necessity of the conditions \eqref{cond2-bp} and \eqref{cond3-bp}. Let us assume that the embedding \eqref{embed1-bp} is continuous. For any $j_0\in \nat$ we can define a sequence  
\[\lambda_{j,m}= 
\begin{cases}
	1 & \;\text{if}\quad j=j_0,\; m=0,\\
	0 & \; \text{otherwise}. 
\end{cases}
\]
Then 
\[ %\frac{\varphi_2(2^{-j_0})}{2^{-j_0\frac{d}{p_2}}} 2^{j_0(s_2-\frac{d}{p_2})} 
2^{j_0s_2}\varphi_2(2^{-j_0}) = \|\lambda|\bpb(Q)\|  	  \le C  \|\lambda|\bpa(Q)\| =  2^{j_0s_1}\varphi_1(2^{-j_0}) .\]
This proves that the condition \eqref{cond2-bp} is necessary.  

If $s_2> s_1$, then the sequence $\lambda_{j,m}= 2^{-jt}$, $Q_{j,m}\subset Q$, $s_2>t>s_1$, is an element of the space $\bpa(Q)$, but does not belong to $\bpb(Q)$.  So $s_2\le s_1$. If $s_1=s_2$ and $q_1>q_2$, then we can choose a positive sequence $\mu=\{\mu_j\}_j\in \ell_{q_1}\setminus \ell_{q_2}$ and put   
\[\lambda_{j,m}= 	2^{-js_1}\mu_j  \quad\text{if}\quad Q_{j,m}\subset Q . \]
 Then 
 \[\|\lambda|\bpa(Q)\|=\big(\sum_{j=0}^\infty\mu_j^{q_1}\big)^\frac{1}{q_1}<\infty,\quad
 \text{but}  \quad \|\lambda|\bpb(Q)\|=\big(\sum_{j=0}^\infty\mu_j^{q_2}\big)^\frac{1}{q_2}=\infty,
\]
so the condition \eqref{cond3-bp} is also necessary. 
\end{proof}

\begin{remark}\label{cont-special-B}
We compare our above result with the classical situation when $\varphi_i(t)\sim t^{\nd(\frac{1}{p_i}-\tau_i)}$, $0\leq\tau_i<\frac{1}{p_i}$, $i=1,2$, $p_1\geq p_2$,  and $\At=\Bt$.
Here the continuity of the embedding $\Bta(\Omega)\to\Btb(\Omega)$ can be characterised by \eqref{tau-comp-u2'}
including limiting cases, cf. \cite[Theorem~4.9]{GHS-23}. Then Proposition~\ref{P-bp-cont} coincides with our previous findings for the special case in this setting.
\end{remark}

We now concentrate on the counterpart of Proposition~\ref{P-bp-cont}  for $p_1<p_2$. Please note that, taking into account the sequence space version of the embeddings in \eqref{emb-B-N} and Theorem~\ref{th-cont}, 
 it turns out that  condition \eqref{cond2-bp} is necessary for the embedding \eqref{embed1-bp} also when $p_1<p_2$. In the same way,  with $ \varrho=\min(1,\frac{p_1}{p_2})$, the condition 
 $$
\left\{2^{j(s_2-s_1)} \varphi_1(2^{-j})^{\varrho-1}\right\}_{j\in\nat} \in \ell_\infty
$$ 
is necessary for the embedding {\eqref{embed1-bp}}. %\eqref{emb-B-N}. 
 This observation gives some motivation for the following definition.
  Depending on the functions $\varphi_1$ and $\varphi_2$, we introduce some critical smoothness indices  for later use in several propositions.
  
\begin{definition} \label{indices}
  Let $0<p_i<\infty$  for $i=1,2$ and $ \varrho=\min(1,\frac{p_1}{p_2})$.  Assume that $\varphi_i \in \Gpx{p_i}$ with $\varphi_i(1)=1$, $i=1,2$. Let $s_1\in\real$ be given.
\begin{enumerate}[{\bfseries\upshape(i)}]
	\item
%	If there exists some $c>0$ such that 
%	\begin{equation} \varphi_2(t)\leq c\ \varphi_1^\varrho(t), \quad t\in (0,1), 
%		\label{comp-1}
%	\end{equation} 
%	we define
We define 
	\begin{equation}\label{comp-1a}
		\sigma= \sigma(s_1)= \sup\{s\in\rr: \sup_{j\geq 0} 2^{j(s-s_1)} \varphi_1(2^{-j})^{\varrho-1} < \infty\}
	\end{equation}
        and
              \begin{equation}\label{sigma}
\sigma_{\infty}=\sigma_{\infty}(s_1)= \sup\{s\in\rr: \sup_{j\geq 0} 2^{j(s-s_1)} \varphi_1(2^{-j})^{-1} < \infty\}.
\end{equation}
      \item
Assume that there exists some $c>0$ such that 
	\begin{equation} \varphi_2(t)\geq c\ \varphi_1^\varrho(t), \quad t\in (0,1).
		\label{comp-2}
	\end{equation} 
Then we introduce   
	\begin{equation}\label{comp-2a}
		\overline{\sigma}=\overline{\sigma}(s_1) = \sup\{s\in\rr: \sup_{j\geq 0} 2^{j(s-\sigma(s_1))} \frac{\varphi_2(2^{-j})}{\varphi^{\varrho}_1(2^{-j})} < \infty\}.
	\end{equation}
    \end{enumerate}
\end{definition}

\begin{remark}\label{Rem-sigma}
  Let us briefly discuss the numbers $\sigma$, $\sigma_\infty$ and $\overline{\sigma}$ given by \eqref{comp-1a}, \eqref{sigma} and \eqref{comp-2a}. Note that the notation in \eqref{sigma} could be understood as the (not admitted) borderline case $p_2=\infty$ of \eqref{comp-1a}. {It is also obvious that the numbers $\sigma$, $\sigma_\infty$ and $\overline{\sigma}$ do not only depend on $s_1$, but also on $p_i$ and $\varphi_i$, $i=1,2$. But for convenience we decided to consider these numbers as bounds for the smoothness $s_2$ in relation to $s_1$ mainly, while regarding all other parameters as fixed.}
  
\begin{enumerate}[{\bfseries\upshape(i)}]
\item
 % Assume that \eqref{comp-1} is satisfied. 
If $\lim_{j\to\infty} \varphi_1(2^{-j}) =c>0$ or $\varrho=1$, then  $\sigma(s_1)=s_1$. 
If $\lim_{j\to\infty} \varphi_1(2^{-j}) =0$ and $\varrho<1$, then $\varphi_1(2^{-j})^{\varrho-1} \to\infty$ for $j\to\infty$. Moreover, $\varphi_1\in\Gpx{p_1} $ implies $\lim_{j\to\infty} 2^{j\nd/p_1} \varphi_1(2^{-j})>0$, that is, 
\[\lim_{j\to\infty} 2^{j (\varrho-1)\frac{\nd}{p_1}} \varphi_1(2^{-j})^{\varrho-1}<\infty.\]
Hence $\sigma(s_1)$ is always well-defined, with $s_1-\frac{\nd}{p_1}(1-\varrho)\leq \sigma(s_1)\leq s_1$. %Note that this is independent of \eqref{comp-1}.
The same  argument can be applied to $\sigma_\infty(s_1)$, leading finally to  $s_1-\frac{\nd}{p_1}\leq \sigma_\infty(s_1)\leq s_1$.
\item
Now assume that \eqref{comp-2} is satisfied. This implies 
$\varphi_1(t)\leq \varphi_1(t)^\varrho\leq c\ \varphi_2(t)$, $0<t<1$, and, by \eqref{comp-2a},  $\overline{\sigma}(s_1)\leq \sigma(s_1)\leq s_1$. For the estimate from below we claim, that $\overline{\sigma}(s_1)\geq s_1- \frac{\nd}{p_1}$. This can be seen as follows. Recall  $\varphi_2(2^{-j})\leq 1$, $j\in\no$, since  $\varphi_2\in \Gpx{p_2}$ { and $\varphi_2(1)=1$}. Let $\varrho=1$. Now $\sigma(s_1)=s_1$ and $\varphi_1\in \Gpx{p_1}$ that implies $\varphi_1(2^{-j}) \geq 2^{-j\nd/p_1}$, $j\in\no$, and this results in $\overline{\sigma}(s_1)\geq s_1-\nd/p_1$.  
	If $\varrho<1$, then  $\varphi^\varrho_1\in \Gpx{p_2}$, and thus $\varphi_1^\varrho(2^{-j}) \geq 2^{-j\nd/p_2}$, $j\in\no$, leading to $\overline{\sigma}(s_1)\geq \sigma(s_1)-\frac{\nd}{p_2} \ge  s_1-\frac{\nd}{p_1}(1-\varrho)-\frac{\nd}{p_2} = s_1-\frac{\nd}{p_1}$. So $\overline{\sigma}(s_1)$ is well-defined in all cases with the above estimates. 
	 
\end{enumerate}
\end{remark}

\begin{example}\label{ex-class}
For later comparison, let us exemplify the above numbers for the `classical' case $\varphi_i(t)\sim t^{\nd(\frac{1}{p_i}-\tau_i)}$, $0\leq\tau_i<\frac{1}{p_i}$, $i=1,2$. Then 
\[
\sigma(s_1)=\begin{cases} s_1, & \varrho=1, \\ s_1-\frac{\nd}{p_1}+\frac{\nd}{p_2} + \nd \tau_1(1-\varrho), &\varrho<1. \end{cases} 
\]
In case of $\sigma_\infty$ given by \eqref{sigma} we find in this setting
$  \sigma_\infty(s_1)= s_1-\frac{\nd}{p_1} + \nd \tau_1$, 
which refers to the above $\sigma(s_1)$ with $p_2\to \infty$, and thus $\tau_2\to 0$, $\varrho\to 0$. Finally,  condition \eqref{comp-2} means in this context that 
\[
\begin{cases} \frac{1}{p_1}-\tau_1-\frac{1}{p_2}+\tau_2 \geq 0, & \varrho=1, \\ \tau_2- \tau_1 \varrho \geq 0, & \varrho<1, \end{cases} 
\qquad\text{and thus}\quad 
%\overline{\sigma}(s_1)=\begin{cases} s_1-\nd\left(\frac{1}{p_1}-\tau_1-\frac{1}{p_2}+\tau_2\right), & \varrho=1, \\ s_1-\nd(\tau_2- \tau_1 \varrho), & \varrho<1, \end{cases}  
{\overline{\sigma}(s_1)=s_1-\nd\left(\frac{1}{p_1}-\tau_1-\frac{1}{p_2}+\tau_2\right)}
\]
in view of \eqref{comp-2a}.
\end{example}

%as well as Definition \ref{indices} below.
Recall that we characterised in Proposition~\ref{P-bp-cont} the continuity of the embedding $\bpa(Q)\hookrightarrow \bpb(Q)$ in case of $p_1\geq p_2$. Now we deal with the case $p_1<p_2$ and start with the limiting case {$q_2=\infty$} and $\varphi_2 \sim 1$.

{

\begin{proposition}\label{pinfinity}
Let  $0<p<\infty$ and $\varphi\in {\mathcal G}_{p}$. We assume without loss of generality that $\varphi(1)=1$.  Let $s_1\in\rr$, $0<q \leq \infty$,  and let  
	\begin{equation}\label{liminf}
	\sup_{j\geq 0} 2^{j(\sigma_\infty(s_1)-s_1)} \varphi(2^{-j})^{-1} < \infty	.
      \end{equation}
      \begin{enumerate}[\bfseries\upshape  (i)]
%	for some $\sigma\ge 0$. 
\item	There is a continuous embedding   
	\begin{equation}\label{pinfty}
		b^{s_1,\varphi}_{p,q}(Q)\hookrightarrow b^{s_2}_{\infty,\infty}(Q)
	\end{equation}
	 if, and only if, 
	$s_2\le \sigma_\infty(s_1)$.  
      \item The embedding \eqref{pinfty} is  compact   if, and only if, 
      		\begin{equation}\label{comp-nec}
      			s_2 < \sigma_\infty(s_1) \quad\text{or}\quad s_2 = \sigma_\infty(s_1) \quad\text{and}\quad \limsup_{j\rightarrow \infty} 2^{j(\sigma_\infty(s_1)-s_1)} \varphi(2^{-j})^{-1}=0.
      		\end{equation} 
%      \item If $s_2 < \sigma_\infty(s_1)$, then the embedding \eqref{pinfty} is  compact.\\
%        If,  in addition,
%        \begin{equation}\label{comp-nec}
%          \limsup_{j\rightarrow \infty} 2^{j(\sigma_\infty(s_1)-s_1)} \varphi_1(2^{-j})^{-1}>0,
%        \end{equation}
%        then the embedding \eqref{pinfty} is compact if, and only if,  %$s_2<\sigma_\infty(s_1)$.  
\end{enumerate} 
\end{proposition}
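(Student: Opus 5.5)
The plan is to reduce everything to a single-coefficient estimate. This works because the target norm is simply $\|\lambda\mid b^{s_2}_{\infty,\infty}(Q)\|=\sup_{j,m}2^{js_2}|\lambda_{j,m}|$. For a fixed pair $(j,m)$ with $Q_{j,m}\subset Q$, choose $P=Q_{j,m}$ in \eqref{seq-bphi-Om} and keep only the term at level $j$. This gives
\[
\varphi(2^{-j})\,2^{jd/p}\,2^{j(s_1-\frac dp)}|\lambda_{j,m}|\le \|\lambda\mid b^{s_1,\varphi}_{p,q}(Q)\|,
\]
that is, $2^{js_2}|\lambda_{j,m}|\le 2^{j(s_2-s_1)}\varphi(2^{-j})^{-1}\|\lambda\mid b^{s_1,\varphi}_{p,q}(Q)\|$. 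Set $\gamma_j:=2^{j(s_2-s_1)}\varphi(2^{-j})^{-1}$. Both parts of the proposition then come down to two facts about $\{\gamma_j\}$: the embedding \eqref{pinfty} is continuous iff $\gamma\in\ell_\infty$, and compact iff $\gamma\in c_0$.

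For the converse direction, consider the unit sequence $e^{(j)}$ with $e^{(j)}_{j,0}=1$ and all other entries zero. I will compute its $b^{s_1,\varphi}_{p,q}(Q)$-norm exactly. Only cubes $P\supset Q_{j,0}$ contribute, and each contributes $\varphi(\ell(P))\ell(P)^{-d/p}\,2^{j(s_1-d/p)}$. By the $\Gp$-condition \eqref{Gp-def}, $t\mapsto t^{-d/p}\varphi(t)$ is non-increasing, so the supremum is attained at $P=Q_{j,0}$. Hence $\|e^{(j)}\mid b^{s_1,\varphi}_{p,q}(Q)\|=2^{js_1}\varphi(2^{-j})$ and $\|e^{(j)}\mid b^{s_2}_{\infty,\infty}(Q)\|=2^{js_2}$. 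Consequently continuity forces $\gamma\in\ell_\infty$.

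For (i), write $\gamma_j=2^{j(s_2-\sigma_\infty)}\cdot 2^{j(\sigma_\infty-s_1)}\varphi(2^{-j})^{-1}$. By \eqref{liminf} the second factor is bounded, so $\gamma\in\ell_\infty$ whenever $s_2\le\sigma_\infty(s_1)$, and this proves sufficiency. Necessity: if $\gamma\in\ell_\infty$, then $s_2$ belongs to the set in \eqref{sigma}, hence $s_2\le\sigma_\infty(s_1)$.

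For (ii), sufficiency: if $s_2<\sigma_\infty(s_1)$, the same factorisation together with \eqref{liminf} gives $\gamma_j\to0$. If $s_2=\sigma_\infty(s_1)$, the limsup hypothesis in \eqref{comp-nec} says precisely that $\gamma_j\to0$. Given $\varepsilon>0$, choose $J$ with $\sup_{j\ge J}\gamma_j<\varepsilon$. The truncated operator keeping only the levels $j<J$ has finite rank, because $Q$ is bounded and so there are finitely many $m$ per level, and the remainder has norm at most $\varepsilon$. Thus the embedding is a norm limit of finite-rank operators, and therefore compact; this is exactly as in Step 1 of Theorem~\ref{th:comp}. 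Necessity: if \eqref{comp-nec} fails but the embedding is continuous, then $s_2=\sigma_\infty(s_1)$ and $\gamma_{j_k}\ge\beta>0$ along some subsequence. The normalised vectors $\lambda^{(k)}=e^{(j_k)}/(2^{j_ks_1}\varphi(2^{-j_k}))$ have unit norm in the source. Since they have disjoint supports, $\|\lambda^{(k_1)}-\lambda^{(k_2)}\mid b^{s_2}_{\infty,\infty}(Q)\|\ge\beta$ for $k_1\ne k_2$, so the images have no convergent subsequence.

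The only real point requiring care is the exact norm computation of $e^{(j)}$, namely that the supremum over $P$ is attained at the smallest admissible cube. This is where the monotonicity of $t^{-d/p}\varphi(t)$ from $\Gp$ enters. Everything else is bookkeeping with the sequence $\gamma$.
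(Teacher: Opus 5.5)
Your proof is correct. The sufficiency parts of (i) and (ii) and the necessity part of (ii) follow the paper's proof. These are the one-coefficient estimate from testing with $P=Q_{j,m}$, truncation to finitely many levels with a small remainder, and normalised unit vectors along a subsequence with $\gamma_{j_k}\ge\beta>0$. On the last point you are more careful than the paper, which places its vectors at every level $k$.

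The genuine difference is the necessity of $s_2\le\sigma_\infty(s_1)$ in (i). The paper assumes $s_2>\sigma_\infty(s_1)$ and builds one sparse sequence, supported on levels $j_{k_\ell}$ along which $2^{jd/p}\varphi(2^{-j})$ at least doubles. This geometric growth keeps the source norm bounded while the target norm is infinite. The degenerate case $\lim_{j} 2^{jd/p}\varphi(2^{-j})<\infty$, i.e.\ $\varphi(t)\sim t^{d/p}$, is handed over to the classical Besov sequence spaces.

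You instead evaluate the embedding constant on the unit vectors $e^{(j)}$. You compute their source norm exactly through the monotonicity of $t^{-d/p}\varphi(t)$, which is the device the paper itself uses in Step~2 of Proposition~\ref{P-bp-cont}. Your route is shorter and needs no case split. It also gives the clean reformulation: continuity holds iff $\gamma\in\ell_\infty$, and compactness holds iff $\gamma\in c_0$. Here \eqref{liminf} enters only when translating $\gamma\in\ell_\infty$ into $s_2\le\sigma_\infty(s_1)$.

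The paper's construction gives an explicit element of $b^{s_1,\varphi}_{p,q}(Q)$ outside $b^{s_2}_{\infty,\infty}(Q)$. It therefore disproves even the set-theoretic inclusion without using a boundedness constant. Since coordinate functionals are continuous on both quasi-Banach spaces, the closed graph theorem makes inclusion and continuity equivalent here, so your approach loses nothing.
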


\begin{proof}
 {\em Step 1}.~ First we deal with (i) and show that $s_2\leq \sigma_\infty(s_1)$ implies the continuity of \eqref{pinfty}. It is sufficient to prove that the embedding holds for $q=\infty$ and $s_2= \sigma_\infty(s_1)$. 
 %If \eqref{liminf} holds, then $\inf_j 2^{j(s_1-\sigma_\infty(s_1))}\varphi(2^{-j})=c>0$. 
	Let $P=Q_{j_0,m_0}$.  We have,  {by \eqref{liminf},}
	\begin{align*}
		2^{j_0s_2}|\lambda_{j_0,m_0}| & =  2^{j_0\frac{d}{p}}\varphi(2^{-j_0}) 2^{j_0(s_1-\frac{d}{p})}|\lambda_{j_0,m_0}| 2^{j_0(s_2-s_1)} \varphi(2^{-j_0})^{-1}\\
		& \le c\; 2^{j_0\frac{d}{p}}\varphi(2^{-j_0})\, \sup_{j\ge j_0} 2^{j(s_1-\frac{d}{p})}\left( \sum_{Q_{j,m}\subset P}|\lambda_{j,m}|^p\right)^\frac{1}{p} \le c\; \|\lambda|b^{s_1,\varphi}_{p,\infty}(Q)\|,
	\end{align*}
        which proves \eqref{pinfty}, since  it is sufficient to take  the supremum over the cubes $Q_{j_0,m_0}$.

 Now we deal with the necessity of $s_2\leq \sigma_\infty(s_1)$ for the embedding \eqref{pinfty}. More precisely, $s_2> \sigma_\infty(s_1)$ means  
 	\[	\sup_{j\geq 0} 2^{j(s_2-s_1)} \varphi(2^{-j})^{-1} = \infty\] 
in view of \eqref{sigma} with $\varphi_1=\varphi$. We prove that then the embedding \eqref{pinfty} does not hold. We choose a strictly increasing sequence of positive integers $\{j_k\}_k$ such that
 	  \begin{equation}	\lim_{k\rightarrow \infty} 2^{j_k(s_2-s_1)} \varphi(2^{-j_k})^{-1} = \infty . \label{LS-5}
     \end{equation}
     The sequence $\{2^{j\frac{d}{p}} \varphi(2^{-j})\}_{j\in \nat}$ is increasing since $\varphi\in \mathcal{G}_p$. First we assume that  $\lim_{j\rightarrow \infty} 2^{j\frac{d}{p}} \varphi(2^{-j})=\infty$. In this case we can find a subsequence $\{j_{k_\ell}\}_{\ell\in \nat}$ of the sequence $\{j_k\}_{k\in \nat}$ such that 
     \begin{equation} 2^{j_{k_\ell}\frac{d}{p}} \varphi(2^{-j_{k_\ell}})\ge 2 \cdot 2^{j_{k_{\ell-1}}\frac{d}{p}} \varphi(2^{-j_{k_{\ell-1}}}) .
\label{LS-4}
     \end{equation}
     Let
     \[\lambda_{j,m}=
     \begin{cases}
     	2^{-j s_1} \varphi(2^{-j})^{-1} &\text{if}\quad j=j_{k_\ell}\; \text{ and} \; m=0,\\
     	0 & \text{otherwise.}
     \end{cases}\]
     Then, also in view of \eqref{LS-4}, 
     \begin{align}
     	\| \lambda |b^{s_1,\varphi}_{p,q}(Q)\| = &\sup_{\ell\in\nat} \frac{\varphi(2^{-j_{k_\ell}})}{2^{-j_{k_\ell}\frac{d}{p}}} \left(\sum_{\nu\ge \ell} 2^{j_{k_\nu}(s_1-\frac{d}{p})q} |\lambda_{j_{k_\nu},0}|^q     \right)^\frac{1}{q} \nonumber \\ 
     =   & \sup_{\ell\in\nat} \frac{\varphi(2^{-j_{k_\ell}})}{2^{-j_{k_\ell}\frac{d}{p}}} \left(\sum_{\nu\ge \ell} 2^{-j_{k_\nu}\frac{d}{p}q} \varphi(2^{-j_{k_\nu}})^{-q}     \right)^\frac{1}{q}  \nonumber \\ 
  \leq     & \sup_{\ell\in\nat} \frac{\varphi(2^{-j_{k_\ell}})}{2^{-j_{k_\ell}\frac{d}{p}}} \left(\sum_{\nu\ge \ell} 2^{(\ell-\nu)q}2^{-j_{k_\ell}\frac{d}{p}q} \varphi(2^{-j_{k_\ell}})^{-q}     \right)^\frac{1}{q} \le C<\infty.       \nonumber 
     \end{align}
     On the other hand, by \eqref{LS-5},
     \[\sup_{j,m} 2^{js_2} |\lambda_{j,m}| = \sup_{\ell\in\nat} 2^{j_{k_\ell}(s_2-s_1)} \varphi(2^{-j_{k_\ell}})^{-1} = \infty .
     \]
     So \eqref{pinfty} cannot hold. Finally we need to consider the case when $\lim_{j\rightarrow \infty} 2^{j\frac{d}{p}} \varphi(2^{-j}) = c<\infty$. But this implies $\varphi(2^{-j}) \sim 2^{-j\frac{d}{p}}$ and the space $b^{s_1,\varphi}_{p,q}(Q)$ coincides with the classical Besov sequence space $b^{s_1}_{p,q}(Q)$. In that case the statement is well known.     \\
        
{\em Step 2}.~ {
  We now deal with the compactness in (ii). First,  assume $s_2 < \sigma_\infty(s_1)$. Let $\|\lambda|b^{s_1,\varphi}_{p,q}(Q)\|\le 1$. For any  $\varepsilon>0$ we can choose $j_0\in \nat$ such that  $2^{j(s_2-\sigma_\infty(s_1))}\le \varepsilon$ 
  for any $j\ge j_0$. Then, in a similar way as above, 
  \[2^{js_2}|\lambda_{j,m}|\le c\,2^{j(s_2-\sigma_\infty(s_1))}\|\lambda |b^{s_1,\varphi}_{p,q}(Q)\|\le  c \,\varepsilon, \qquad j\ge j_0.\] 
    The subspace  
    $\{\lambda\in b^{s_1,\varphi}_{p,q}(Q): \lambda_{j,m}=0 \quad\text{if}\quad {j >j_0}\}$ 
    is finite-dimensional, therefore the image of the unit ball  of the space $b^{s_1,\varphi}_{p,q}(Q)$ is precompact  in $b^{s_2}_{\infty,\infty}(Q)$. \\
    If $s_2=\sigma_\infty{(s_1)}$  and $\limsup_{j\rightarrow \infty} 2^{j(\sigma_\infty{(s_1)}-s_1)} {\varphi}(2^{-j})^{-1}=0$, then  
    	\[\lim_{j\rightarrow \infty} 2^{j(\sigma_\infty{(s_1)}-s_1)} {\varphi}(2^{-j})^{-1}=\limsup_{j\rightarrow \infty} 2^{j(\sigma_\infty{(s_1)}-s_1)} {\varphi}(2^{-j})^{-1}=0,\]
    	since  the sequence is positive. For any  $\varepsilon>0$ there exists a number $j_0$ such that for any $j\ge j_0$ we have 
    	\begin{equation}\label{comr2}
    		2^{j\sigma_\infty{(s_1)}} \le \varepsilon 2^{js_1}\varphi(2^{-j}) .
    	\end{equation}
    	Let $\|\lambda|b^{s_1,\varphi}_{p,q}(Q)\|\le 1$. We represent $\lambda$ as a sum $\lambda=\lambda^{(1)}+\lambda^{(2)}$ where 
    	\[\lambda^{(1)}_{j,m} =
    	\begin{cases}
    		\lambda_{j,m} & \; \text{if}\; 0\le j\le j_0, \\
    		0 & \;\text{otherwise}. 
    	\end{cases}
    	\]  
    	It follows from \eqref{comr2} that 
    	\begin{align}
    		2^{j\sigma_\infty(s_1)} |\lambda_{j,m}|\le \varepsilon 2^{j\frac{d}{p}}\varphi(2^{-j}) 2^{j(s_1-\frac{d}{p})}|\lambda_{j,m}| \le \varepsilon \|\lambda|b^{s_1,\varphi}_{p,q}(Q)\| \leq \varepsilon  \quad \text{if}\quad 	j>j_0.
    	\end{align}    
    	On the other hand, the subspace of  sequences with $\lambda_{j,m}=0$ for $j> j_0$ is finite-dimensional. 
    	%So the subset of these sequences with norm less or equaled $1$ is precompact. 
    	This implies the compactness of the embedding \eqref{pinfty}. 
	  
	   {Now we prove the necessity of the condition \eqref{comp-nec}. In view of (i), it remains to show that if  $\limsup_{j\rightarrow \infty} 2^{j(\sigma_\infty(s_1)-s_1)} \varphi(2^{-j})^{-1}>0$, then the compactness of \eqref{pinfty}  implies $s_2<\sigma_\infty(s_1)$. }
%\red{If  $\limsup_{j\rightarrow \infty} 2^{j(\sigma_\infty-s_1)} \varphi_1(2^{-j})^{-1}=0$  then any continuous embedding  \eqref{pinfty} is compact, so it remains to show that if } $\limsup_{j\rightarrow \infty} 2^{j(\sigma_\infty(s_1)-s_1)} \varphi(2^{-j})^{-1}>0$, then the compactness of \eqref{pinfty} also implies $s_2<\sigma_\infty(s_1)$. 
This can be seen as follows. Assume $s_2=\sigma_\infty(s_1)$. Let $\{j_k\}_{k\in\nat}$ be an increasing sequence of integers such that 
	\[		\lim_{k\rightarrow \infty} 2^{j_k(s_2-s_1)} \varphi(2^{-j_k})^{-1}= c>0 , \qquad s_2=\sigma_\infty(s_1). \] 
	We take  sequences $\lambda^{(k)}$, $k\in \nat$, of elements of $b^{s_1,\varphi}_{p,q} (Q)$ defined in the following way, 
	\[\lambda_{j,m}^{(k)}=
	\begin{cases}
		2^{-j s_1} \varphi(2^{-j})^{-1} &\text{if}\quad j= k\; \text{ and} \; m=0,\\
		0 & \text{otherwise.}
	\end{cases}\]
	Then $ \|\lambda^{(k)}|b^{s_1,\varphi}_{p,q}(Q)\|=1$, but 
	\[ \|\lambda^{(k)}- \lambda^{(\ell)}|b^{s_2}_{\infty,\infty}(Q)\| =  \max \{2^{k(s_2- s_1)} \varphi(2^{-k})^{-1}, 2^{\ell(s_2- s_1)} \varphi(2^{-\ell})^{-1}\}\ge \frac{c}{2}, \]
	for $k,\ell$ sufficiently large. This contradicts the assumed compactness of \eqref{pinfty}.}
  \end{proof}

{
\begin{example} 
Consider the function $\varphi$ given by \eqref{exp1}, from Examples~\ref{ex-phi}(vi), assuming that the parameter $a$ is chosen sufficiently large so that $\varphi \in \mathcal{G}_p$. In this case we have $\sigma_\infty(s_1)=s_1$ and $\limsup_{j\rightarrow \infty} 2^{j(\sigma_\infty(s_1)-s_1)} \varphi(2^{-j})^{-1}=0$. 
%	Note that the above additional assumption \eqref{comp-nec} is not always satisfied.  If we %take  the function $\varphi$ from Examples~\ref{ex-phi}(v) we have $\sigma_\infty(s_1)=s_1$ and %$\liminf_{j\rightarrow \infty} 2^{j(\sigma_\infty(s_1)-s_1)} \varphi(2^{-j})^{-1}=0$.  \\

In the case of the function $\psi$ in  \eqref{ex-psi}, from Examples~\ref{ex-phi}(vii), which belongs to $\mathcal{G}_p$ when $p \le d$,
%\[\psi(t)=\begin{cases}
%	(e\,t)^\frac{d}{p}\ln(t^{-1})&\; \text{if} \quad 0<t<e^{-1},\\
%	1& \;\text{if}\quad t\ge e^{-1}.  
%\end{cases}
%\]
%belongs to $\mathcal{G}_p$ \magenta{if $p \le d$}. 
 it holds  $\sigma_\infty{(s_1)}=s_1-\frac{d}{p}$ and 
 \[ \sup_{j\ge 0} 2^{j(\sigma_\infty{(s_1)}-s_1)} \psi(2^{-j})^{-1}<\infty,\quad\text{but}\quad \limsup_{j\rightarrow \infty} 2^{j(\sigma_\infty{(s_1)}-s_1)} \psi(2^{-j})^{-1}=0. 
\] 
\end{example}

\begin{remark}
If $ \sup_{j\geq 0} 2^{j(\sigma_\infty(s_1)-s_1)} \varphi(2^{-j})^{-1}=\infty$, then, in a similar way, we can prove that the embedding \eqref{pinfty} is continuous and compact if $s_2<\sigma_\infty(s_1)$.  
\end{remark}
}

Now we approach the situation when $0<p_1<p_2<\infty$. 
Let us state first, for convenience, the following Gagliardo-Nirenberg type inequality which might be well-known, but we have not found an immediate reference. The result follows by H\"older's inequality. 

 \begin{lemma}\label{G-N}
 Let $0<q_1,q_2\le \infty$, $-\infty<s_2<s_1<\infty$ and $0<\theta<1$. 
\begin{enumerate}[\bfseries\upshape  (i)]
\item	Let $0<p_1,p_2<\infty$. We assume that 
	\[ s=s_1(1-\theta)+ \theta s_2, \quad \frac{1}{p}=\frac{1-\theta}{p_1}+\frac{\theta}{p_2},\quad \frac{1}{q}=\frac{1-\theta}{q_1}+\frac{\theta}{q_2}.\]  
	Let $\varphi_1\in {\mathcal G}_{p_1}$, $\varphi_2\in {\mathcal G}_{p_2}$ and 
	\[ \varphi(t)=\varphi_1^{1-\theta}(t)\varphi_2^\theta(t), \qquad t>0.\]
	Then $\varphi\in {\mathcal G}_{p}$ and there exists a positive constant $C>0$ such that
	the inequality 
	\begin{equation}\label{GN}
		\|\lambda|b^{s,\varphi}_{p,q}(Q)\| \le C \|\lambda|b^{s_1,\varphi_1}_{p_1,q_1}(Q)\|^{1-\theta}\, \|\lambda|b^{s_2,\varphi_2}_{p_2,q_2}(Q)\|^\theta  
	\end{equation}
	holds for all sequences $\lambda=\{\lambda_{j,m}\}_{j,m}\subset \cc$.
\item Let $0<p_1<\infty$. We assume that 
	\[ s=s_1(1-\theta)+ \theta s_2, \quad \frac{1}{p}=\frac{1-\theta}{p_1},\quad \frac{1}{q}=\frac{1-\theta}{q_1}+\frac{\theta}{q_2}.\]  
	Let $\varphi_1\in {\mathcal G}_{p_1}$ and 
	\[ \varphi(t)=\varphi_1^{1-\theta}(t), \qquad t>0.\]
	Then $\varphi\in {\mathcal G}_{p}$ and there exists a positive constant $C>0$ such that
	the inequality 
	\begin{equation}\label{GNinfty}
		\|\lambda|b^{s,\varphi}_{p,q}(Q)\\| \le C \|\lambda|b^{s_1,\varphi_1}_{p_1,q_1}(Q)\|^{1-\theta}\, \|\lambda|b^{s_2}_{\infty,q_2}(Q)\|^\theta 
	\end{equation}
	holds for all sequences $\lambda=\{\lambda_{j,m}\}_{j,m}\subset \cc$.
	\end{enumerate}
\end{lemma}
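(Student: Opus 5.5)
The plan is to prove both parts by repeated use of H\"older's inequality, applied at three levels: to the weight $\varphi(\ell(P))|P|^{-1/p}$, to the $\ell_q$-sum over $j$, and to the inner $\ell_p$-sum over $m$. I treat (i) in detail; (ii) is the formal case $p_2=\infty$ with $\varphi_2\equiv 1$.

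\emph{Membership $\varphi\in\mathcal{G}_p$.} The function $\varphi=\varphi_1^{1-\theta}\varphi_2^\theta$ is non-decreasing and positive. Moreover,
\[
t^{-\frac{d}{p}}\varphi(t)=\bigl(t^{-\frac{d}{p_1}}\varphi_1(t)\bigr)^{1-\theta}\bigl(t^{-\frac{d}{p_2}}\varphi_2(t)\bigr)^{\theta},
\]
because $\frac{d}{p}=(1-\theta)\frac{d}{p_1}+\theta\frac{d}{p_2}$. Both factors on the right are non-increasing, so $t^{-d/p}\varphi(t)$ is non-increasing and $\varphi\in\mathcal{G}_p$. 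In case (ii) the same identity holds with only the first factor present.

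\emph{The inequality.} Fix a dyadic cube $P$ with $|P|\le 1$ and $j\ge j_P$. For the inner sums I write
\[
|\lambda_{j,m}|^p=|\lambda_{j,m}|^{(1-\theta)p}\,|\lambda_{j,m}|^{\theta p}
\]
and apply H\"older with exponents $\frac{p_1}{(1-\theta)p}$ and $\frac{p_2}{\theta p}$, whose reciprocals sum to $1$ by the choice of $p$. This gives
\[
\Bigl(\sum_{Q_{j,m}\subset P}|\lambda_{j,m}|^p\Bigr)^{1/p}\le A_j^{1-\theta}B_j^{\theta},
\]
where $A_j,B_j$ denote the $\ell_{p_1}$- and $\ell_{p_2}$-norms of $\{\lambda_{j,m}\}_{Q_{j,m}\subset P}$. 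In case (ii), $B_j$ is replaced by the supremum over $m$ and the exponent $\frac{p_2}{\theta p}$ becomes $\infty$.

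The level weights also factorise exactly. Since
\[
s-\frac{d}{p}=(1-\theta)\Bigl(s_1-\frac{d}{p_1}\Bigr)+\theta\Bigl(s_2-\frac{d}{p_2}\Bigr),
\]
we get
\[
2^{j(s-\frac dp)}A_j^{1-\theta}B_j^\theta=\bigl(2^{j(s_1-\frac{d}{p_1})}A_j\bigr)^{1-\theta}\bigl(2^{j(s_2-\frac{d}{p_2})}B_j\bigr)^{\theta}.
\]
In case (ii) the second factor is $2^{js_2}\sup_m|\lambda_{j,m}|$, which matches the $b^{s_2}_{\infty,q_2}$ quasi-norm. Similarly, the prefactor splits as
\[
\frac{\varphi(\ell(P))}{|P|^{1/p}}=\Bigl(\frac{\varphi_1(\ell(P))}{|P|^{1/p_1}}\Bigr)^{1-\theta}\Bigl(\frac{\varphi_2(\ell(P))}{|P|^{1/p_2}}\Bigr)^{\theta}.
\]

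Next I apply H\"older in $j$ with exponents $\frac{q_1}{(1-\theta)q}$ and $\frac{q_2}{\theta q}$, using the usual modification when some $q_i=\infty$. This bounds the $\ell_q$-sum over $j\ge j_P$ by the product of the $(1-\theta)$-th power of the $\ell_{q_1}$-sum and the $\theta$-th power of the $\ell_{q_2}$-sum. Multiplying by the split prefactor bounds the expression for each $P$ by
\[
\bigl(\text{$P$-term of }b^{s_1,\varphi_1}_{p_1,q_1}\bigr)^{1-\theta}\,\bigl(\text{$P$-term of }b^{s_2,\varphi_2}_{p_2,q_2}\bigr)^{\theta}.
\]
Taking the supremum over $P$ on both sides then gives \eqref{GN}, in fact with $C=1$.

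\emph{Main obstacle.} I expect no real obstacle beyond bookkeeping. The step needing care is case (ii): there I must check that the $P$-localized quantity for the second factor is dominated by the $b^{s_2}_{\infty,q_2}(Q)$ quasi-norm, which carries no localization. This holds because the prefactor $\varphi_2\equiv 1$ and $|P|^{-1/\infty}=1$, and the supremum over $Q_{j,m}\subset P$ is at most the supremum over all $Q_{j,m}\subset Q$. The only other point to watch is the endpoint cases $q_i=\infty$, where H\"older must be written with the supremum in place of the sum.
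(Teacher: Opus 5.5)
Your proposal is correct and takes the same route as the paper, which only remarks that the lemma follows from H\"older's inequality. You supply the details: H\"older in $m$ and in $j$, together with the exact factorisation of $2^{j(s-\frac{d}{p})}$ and $\varphi(\ell(P))|P|^{-1/p}$, which gives the inequality with $C=1$. Your treatment of (ii) is also right: the first H\"older exponent is $1$, the second is $\infty$, and the unlocalised $b^{s_2}_{\infty,q_2}(Q)$ quasi-norm dominates its $P$-localised version.
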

%\begin{proof}
%It follows by H\"older's  inequality. 
%\end{proof}

We study the embedding $\bpa(Q)\hookrightarrow \bpb(Q)$ for $0<p_1<p_2<\infty$ and deal first with the case converse to \eqref{comp-2}, that is, $\varphi_2(t)\leq C\ \varphi_1^\varrho(t)$, $t\in (0,1)$.

\begin{proposition}\label{P-bp-cont2}
Let $0<p_1<  p_2<\infty$,   $s_i\in\rr$, $0<q_i\leq \infty$,  and $\varphi_i\in {\mathcal G}_{p_i}$, for $i=1,2$. Let $\varrho=\frac{p_1}{p_2}$ and assume without loss of generality that $\varphi_1(1)=\varphi_2(1)=1$. Assume  further that 
	\begin{equation} 	\label{comp-1}
\varphi_2(t)\le C \varphi^\varrho_1(t), \quad 0<t\le 1,
	\end{equation}  
 and  
	\begin{equation} \label{emb-lim1}
		\sup_{j\geq 0} 2^{j(\sigma(s_1)-s_1)} \varphi_1(2^{-j})^{\varrho-1} < \infty .
	\end{equation}   
	Then there is a continuous embedding 
		\begin{equation}  \label{emb-lim2}
			\bpa(Q) \hookrightarrow b^{\sigma(s_1), \varphi_2}_{p_2,q_2}(Q) 
		\end{equation}
		if $q_1\le \varrho q_2$. 
	\end{proposition}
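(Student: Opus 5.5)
The plan is to interpolate between the source space $\bpa(Q)$ and an $\ell_\infty$-type Besov sequence space, using the Gagliardo--Nirenberg type inequality of Lemma~\ref{G-N}(ii), and then to compare the interpolated space with the target. I would choose $\theta=1-\varrho$, so that $1-\theta=\varrho=\frac{p_1}{p_2}$. In Lemma~\ref{G-N}(ii) this gives $\frac1p=\frac{\varrho}{p_1}=\frac1{p_2}$ and the weight $\varphi=\varphi_1^{\varrho}$. For the second factor I would take $b^{t}_{\infty,\infty}(Q)$, where $t$ is determined by
\[
\sigma(s_1)=\varrho s_1+(1-\varrho)t,\qquad\text{i.e.}\qquad t=s_1+\frac{\sigma(s_1)-s_1}{1-\varrho}.
\]
The key algebraic observation is that
\[
2^{j(\sigma(s_1)-s_1)}\varphi_1(2^{-j})^{\varrho-1}=\Bigl(2^{j(t-s_1)}\varphi_1(2^{-j})^{-1}\Bigr)^{1-\varrho}.
\]
Hence \eqref{emb-lim1} is exactly the statement $\sup_{j\ge0}2^{j(t-s_1)}\varphi_1(2^{-j})^{-1}<\infty$.

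First, I would prove $\bpa(Q)\hookrightarrow b^{t}_{\infty,\infty}(Q)$ by repeating Step~1 of the proof of Proposition~\ref{pinfinity}(i) with $s_2$ replaced by $t$. That argument only uses the supremum bound above. For $P=Q_{j_0,m_0}$ it gives
\[
2^{j_0t}|\lambda_{j_0,m_0}|=2^{j_0\frac d{p_1}}\varphi_1(2^{-j_0})\,2^{j_0(s_1-\frac d{p_1})}|\lambda_{j_0,m_0}|\;2^{j_0(t-s_1)}\varphi_1(2^{-j_0})^{-1}\ls\|\lambda|\bpa(Q)\|.
\]
Note that $t\le s_1$ because $\sigma(s_1)\le s_1$ by Remark~\ref{Rem-sigma}.

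Second, I would apply \eqref{GNinfty} with $q_2$ there equal to $\infty$, so that $\frac1q=\frac{\varrho}{q_1}$, i.e.\ $q=q_1/\varrho$. Combined with the first step this yields
\[
\|\lambda|b^{\sigma(s_1),\varphi_1^\varrho}_{p_2,q_1/\varrho}(Q)\|\ls\|\lambda|\bpa(Q)\|^{\varrho}\,\|\lambda|b^{t}_{\infty,\infty}(Q)\|^{1-\varrho}\ls\|\lambda|\bpa(Q)\|.
\]
Lemma~\ref{G-N} is stated for $s_2<s_1$. In the degenerate case $t=s_1$ (that is, $\sigma(s_1)=s_1$), I would check directly that the H\"older argument behind Lemma~\ref{G-N} still works, since it never uses strict inequality. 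Alternatively, one can reduce to the strict case by a small perturbation.

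Finally, I would pass from this space to the target. The assumption $q_1\le\varrho q_2$ means $q_1/\varrho\le q_2$, so monotonicity of $\ell_q$ norms in the inner $j$-sum lets me replace $q_1/\varrho$ by $q_2$. In the quasi-norm \eqref{seq-bphi-Om} only cubes with $|P|\le1$ occur, i.e.\ $\ell(P)\le1$, so \eqref{comp-1} gives $\varphi_2(\ell(P))\le C\varphi_1^{\varrho}(\ell(P))$ and the weight can be replaced by $\varphi_2$. This proves \eqref{emb-lim2}.

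I expect the main technical point to be the careful use of Lemma~\ref{G-N}(ii): checking the exponent bookkeeping ($s$, $p$, $q$, $\varphi$) and the borderline case $t=s_1$. The rest consists of the pointwise estimate and elementary monotonicity.
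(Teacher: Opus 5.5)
Your proposal is correct and follows essentially the paper's proof. It uses the same Gagliardo--Nirenberg inequality (Lemma~\ref{G-N}(ii)) with $\theta=1-\varrho$, the same auxiliary index (your $t$ coincides with the paper's $s_3$), and the same passage to $\varphi_2$ and $q_2$ via \eqref{comp-1} and monotonicity. The one difference is that you bound the $b^{t}_{\infty,\infty}(Q)$ factor by the source norm $\|\lambda|\bpa(Q)\|$, whereas the paper applies Proposition~\ref{pinfinity}(i) to the target space $b^{\sigma(s_1),\varphi_2}_{p_2,q_2}(Q)$. The paper's route therefore implicitly needs an absorption step, i.e.\ a priori finiteness of the target norm, for instance via truncation in $j$. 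Your ordering avoids this and is slightly cleaner.
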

	
\begin{proof}
	It is sufficient to consider the case $\varphi_2(t)=\varphi_1^\varrho(t)$, $0<t\le 1$,  { and $q_1=\varrho q_2$}. It follows from  \eqref{emb-lim1} that 
	\begin{equation} \label{emb-lim3}
		\sup_{j\geq 0} 2^{j\frac{\sigma(s_1)-s_1}{1-\varrho}} \varphi_1(2^{-j})^{-1} < \infty .
	\end{equation}   
	We take $s_3\in \rr$ such that $\frac{s_3-\sigma(s_1)}{\varrho}=\frac{\sigma(s_1)-s_1}{1-\varrho}$. Then 
     \[ {\sigma(s_1)}=\varrho s_1 + (1-\varrho)s_3\qquad\text{and}\qquad   \frac{1}{p_2}=\frac{\varrho}{p_1} .\]
     As a consequence of the second part of Lemma~\ref{G-N}, we get 
     	\begin{equation}\label{GNinfty2}
     	\|\lambda|b^{\sigma(s_1),\varphi_2}_{p_2,q_2}(Q)\| \le C \|\lambda|b^{s_1,\varphi_1}_{p_1,q_1}(Q)\|^{\varrho}\, \|\lambda|b^{s_3}_{\infty,\infty}(Q)\|^{1-\varrho},
     \end{equation}
  {with $q_1=\varrho q_2$}. Moreover, the inequality \eqref{emb-lim3} gives us
     	\begin{equation} %\label{emb-lim3}
     	\sup_{j\geq 0} 2^{j(s_3-\sigma(s_1))} \varphi_1(2^{-j})^{-\varrho} < \infty .
     \end{equation}   
      So,  the Proposition~\ref{pinfinity}(i) implies 
     	\begin{equation}\label{pinfty2}
     	b^{\sigma(s_1),\varphi_2}_{p_2,q_2}(Q)\hookrightarrow b^{s_3}_{\infty,\infty}(Q)
     \end{equation}   
     Now \eqref{emb-lim2} follows by combining  \eqref{GNinfty2} with \eqref{pinfty2}. 
\end{proof}

\begin{remark}
  If $0<p_1<p_2<\infty$, \eqref{comp-1} shall hold,  %$\varphi_2(t)\le C \varphi^\varrho_1(t)$
  and $s_1-s_2= \frac{d}{p_1}-\frac{d}{p_2}$,	then there is always a continuous embedding  			$\bpa(Q) \hookrightarrow b^{s_2, \varphi_2}_{p_2,q_2}(Q) $ at least if $q_1\le \varrho q_2$,  since 
  \[2^{j(s_2-s_1)}\varphi_1^{\varrho-1}(2^{-j})\le 2^{j(s_2-s_1+\frac{d}{p_1}-\frac{d}{p_2})} \]
  and thus $\sigma(s_1)\geq s_2$.
   But the embedding may hold with no restriction for $q's$  and be compact if 
  $s_2= s_1-\frac{d}{p_1}+\frac{d}{p_2}<\sigma(s_1)$.    			
\end{remark}

\begin{example}  {
%		Let $a \ge e$. If  
%	\begin{equation}\label{exp1}
%		\varphi(t)= \begin{cases}
%			(\ln a)\,  (\ln t^{-1})^{-1} & \text{if} \; 0<t\le a^{-1},\\
%			1  & \text{if} \; t \ge a^{-1}
%		\end{cases}
%	\end{equation}
%	then $\varphi\in \mathcal{G}_r$, where $r=d\ln a$.  
	Let $p_1<p_2$. Consider once more the  function $\varphi$ given by \eqref{exp1} in Examples~\ref{ex-phi}(vi), assuming that the parameter $a$ is chosen sufficiently large so that $\varphi \in \mathcal{G}_{p_1}$. }
	We have  
	\begin{align*}
		\sup_{j\in\no} 2^{j(s-s_1)} \varphi(2^{-j})^{\varrho-1}= (\ln a)^{\varrho-1} \sup_{j\in\no} 2^{j(s-s_1)} j^{1-\varrho} < \infty
	\end{align*}
for any $s<s_1$, so $\sigma(s_1)=s_1$. But, plainly, the condition \eqref{emb-lim1} is not satisfied. 
\end{example}

Finally we consider the case $0<p_1<p_2<\infty$ with  \eqref{comp-2}, that is, $\varphi_2(t)\geq c\ \varphi_1^\varrho(t)$, $t\in (0,1)$.

\begin{corollary}\label{P-bp-cont3}
Let $0<p_1<  p_2<\infty$,   $s_i\in\rr$, $0<q_i\leq \infty$,  and $\varphi_i\in {\mathcal G}_{p_i}$, for $i=1,2$. Let $\varrho=\frac{p_1}{p_2}$ and assume without loss of generality that $\varphi_1(1)=\varphi_2(1)=1$.  Let 
$\varphi_2(t)\ge c \varphi^\varrho_1(t)$ for $0<t\le 1$. We assume that the conditions \eqref{emb-lim1}  and  
	\begin{align} \label{emb-lim21}
%		\sup_{j\geq 0} 2^{j(\sigma(s_1)-s_1)} \varphi_1(2^{-j})^{\varrho-1} < \infty 
%		\intertext{and}
%			\label{cond2-bp}
		\sup_{j\geq 0}  2^{j(\overline{\sigma}(s_1)-\sigma(s_1))} \frac{\varphi_2(2^{-j})}{\varphi^\varrho_1(2^{-j})}< \infty 
	\end{align}
        are satisfied. 	Then there is a continuous embedding 
	\begin{equation}  %\label{emb-lim2}
		\bpa(Q) \hookrightarrow b^{\overline{\sigma}(s_1), \varphi_2}_{p_2,q_2}(Q) 
	\end{equation}
	if $q_1\le \varrho q_2$.  
      \end{corollary}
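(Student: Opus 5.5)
The plan is to factor the embedding through Proposition~\ref{P-bp-cont2}, applied with an auxiliary target function, followed by a simple change of weight at fixed $p_2$. Put $\widetilde\varphi_2:=\varphi_1^\varrho$. Since $\varphi_1\in\Gpx{p_1}$ and $\varrho=p_1/p_2$, we get $\widetilde\varphi_2\in\Gpx{p_2}$. Indeed, $\widetilde\varphi_2$ is non-decreasing, and
\[
t^{-d/p_2}\widetilde\varphi_2(t)=\bigl(t^{-d/p_1}\varphi_1(t)\bigr)^{\varrho}
\]
is non-increasing. We also have $\widetilde\varphi_2(1)=1$, and $\widetilde\varphi_2$ satisfies \eqref{comp-1} with $C=1$. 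Condition \eqref{emb-lim1} is assumed. Proposition~\ref{P-bp-cont2}, applied with $\widetilde\varphi_2$ in place of $\varphi_2$, therefore yields
\[
\bpa(Q)\hookrightarrow b^{\sigma(s_1),\varphi_1^\varrho}_{p_2,q_2}(Q)\qquad\text{for } q_1\le\varrho q_2 .
\]
Note that $\sigma(s_1)$ depends only on $\varphi_1$ and $\varrho$, so it does not change when $\varphi_2$ is replaced by $\widetilde\varphi_2$.

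The second step is the embedding $b^{\sigma(s_1),\varphi_1^\varrho}_{p_2,q_2}(Q)\hookrightarrow b^{\overline{\sigma}(s_1),\varphi_2}_{p_2,q_2}(Q)$, with the same $p_2$ and $q_2$ on both sides. This is the case $p_1=p_2$ of Proposition~\ref{P-bp-cont}, with source data $(\sigma(s_1),\varphi_1^\varrho)$ and target data $(\overline\sigma(s_1),\varphi_2)$. Condition \eqref{cond2-bp} there reads
\[
\sup_j 2^{j(\overline\sigma(s_1)-\sigma(s_1))}\,\frac{\varphi_2(2^{-j})}{\varphi_1^\varrho(2^{-j})}<\infty,
\]
which is exactly the assumption \eqref{emb-lim21}. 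Condition \eqref{cond3-bp} requires $\overline\sigma(s_1)<\sigma(s_1)$, or equality together with $q_2\le q_2$, which holds trivially. By Remark~\ref{Rem-sigma}(ii), assumption \eqref{comp-2} gives $\overline\sigma(s_1)\le\sigma(s_1)$, so \eqref{cond3-bp} is satisfied in all cases. Composing the two embeddings proves the claim.

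I expect the only delicate point to be the hypotheses of Proposition~\ref{P-bp-cont2}. Its proof reduced to the case $\varphi_2=\varphi_1^\varrho$, so that is exactly the situation we use; I only need to confirm that $\varphi_1^\varrho\in\Gpx{p_2}$, as done above. If one prefers, Proposition~\ref{P-bp-cont} can be checked directly at fixed $p$: its Step~1, with H\"older's inequality becoming trivial, gives the bound through $\sup_P 2^{j_P(s_2-s_1)}\varphi_2(\ell(P))/\varphi_1(\ell(P))$, together with the $q$-monotonicity for equal smoothness.
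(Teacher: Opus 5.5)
Your proposal is correct and is the paper's own proof. The paper uses the same factorisation
\[
\bpa(Q)\hookrightarrow b^{\sigma(s_1),\varphi_1^\varrho}_{p_2,q_2}(Q)\hookrightarrow b^{\overline{\sigma}(s_1),\varphi_2}_{p_2,q_2}(Q),
\]
with Proposition~\ref{P-bp-cont2} for the first step and Proposition~\ref{P-bp-cont} (equal $p$'s) together with \eqref{emb-lim21} for the second. You also verify two points the paper leaves implicit: that $\varphi_1^\varrho\in\Gpx{p_2}$, and that $\overline{\sigma}(s_1)\le\sigma(s_1)$ makes \eqref{cond3-bp} hold.
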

      
\begin{proof}
  To prove the proposition we use the following factorisation
  \begin{equation}
 \bpa(Q) \hookrightarrow b^{\sigma(s_1), \varphi^\varrho_1}_{p_2,q_2}(Q) \hookrightarrow b^{\overline{\sigma}(s_1), \varphi_2}_{p_2,q_2}(Q). 
\end{equation}	
The continuity of the first embedding follows from Proposition~\ref{P-bp-cont2}  whereas the second embedding follows from \eqref{emb-lim21} and Proposition~\ref{P-bp-cont}.   
\end{proof}
}

  \begin{remark}
    Note that if $\varphi_1, \varphi_2$ satisfy \eqref{emb-lim1}  and  
 \eqref{emb-lim21}, then we arrive at \eqref{cond2-bp} with $s_2=\overline{\sigma}(s_1)$. So one might be tempted to extend Proposition~\ref{P-bp-cont} to the case $\varrho<1$ (with possibly $q_1\leq \varrho q_2$). However, this argument is not yet covered by our splitting technique in the above proof where we need both conditions \eqref{emb-lim1}  and   \eqref{emb-lim21} to be satisfied separately.    
  \end{remark}

  \begin{example}
  We return to the setting in Example~\ref{ex-class}. Assume $0<p_1<p_2<\infty$, $0\leq \tau_i<\frac{1}{p_i}$, $\varphi_i(t)\sim t^{d(\frac{1}{p_i}-\tau_i)}$, $i=1,2$. Then 
  $\varphi_2(t)\geq c \varphi^{{\varrho}}_1(t)$, $t\in (0,1)$,  means $\tau_2\geq \varrho \tau_1$, conditions \eqref{emb-lim1}  and \eqref{emb-lim21} are always satisfied, such that we arrive at the limiting embedding
  \[
    b^{s_1,\tau_1}_{p_1,q_1}(Q) \hookrightarrow b^{s_1-d(\frac{1}{p_1}-\tau_1-\frac{1}{p_2}+\tau_2), \tau_2}_{p_2,q_1 p_2/p_1}(Q) .
    \]
This corresponds to the sequence space version of \cite[Theorem~4.9]{GHS-23}.
\end{example}

We now study the compactness of \eqref{embed1-bp}, that is,
\begin{equation*} %\label{embed1-bp}
	\bpa(Q) \hookrightarrow \bpb(Q) 
\end{equation*}
and may assume for convenience, that $Q$ is the unit cube $Q_{0,0}$.  
%Again we deal first with the case \eqref{comp-1}, i.e., $\varphi_2(t)\le C \varphi^\varrho_1(t)$, $0<t\le 1$. 

\begin{proposition}\label{Lemma-LS_1+2}
Let $s_i\in\rr$, $0<p_i<\infty$, $0<q_i\leq \infty$, and $\varphi_i\in {\mathcal G}_{p_i}$, for $i=1,2$, and $ \varrho=\min(1,\frac{p_1}{p_2})$. {We assume without loss of generality that $\varphi_1(1)=\varphi_2(1)=1$.}
\begin{enumerate}[\bfseries\upshape  (i)]
 \item Assume that \eqref{comp-1} is satisfied, i.e., there exists some $c>0$ such that 
$$
\varphi_2(t)\leq c\ \varphi_1^\varrho(t), \quad t\in (0,1).
$$
Then the embedding \eqref{embed1-bp} is compact, if $s_2<\sigma(s_1)$, given by \eqref{comp-1a}.
\item
  Assume that \eqref{comp-2} is satisfied, i.e., there exists some $c>0$ such that 
$$ \varphi_2(t)\geq c\ \varphi_1^\varrho(t), \quad t\in (0,1). $$
{Then  the embedding \eqref{embed1-bp} is compact, if $s_2<\overline{\sigma}(s_1)$, given by \eqref{comp-2a}.}
\end{enumerate}
\end{proposition}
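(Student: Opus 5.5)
We would prove both parts by the usual mechanism for compactness on a bounded cube: factor the embedding through a slightly better space, or show directly that the tail over levels $j\ge j_0$ is uniformly small on the unit ball, while the truncated part lives in a finite-dimensional space. Since $s_2<\sigma(s_1)$ (resp. $s_2<\overline{\sigma}(s_1)$) is strict, we pick an intermediate smoothness $t$ with $s_2<t<\sigma(s_1)$ (resp. $s_2<t<\overline{\sigma}(s_1)$). By definition of the supremum in \eqref{comp-1a} (resp. \eqref{comp-2a}), the corresponding quantity $\sup_j 2^{j(t-s_1)}\varphi_1(2^{-j})^{\varrho-1}$ (resp. with $\overline{\sigma}$) is finite. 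The analogue of \eqref{emb-lim1} therefore holds with $t$ in place of $\sigma(s_1)$, and the loss $2^{j(s_2-t)}\to 0$ provides the decay.

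\textbf{Part (i).} If $\varrho=1$, then $\sigma(s_1)=s_1$ by Remark~\ref{Rem-sigma}. Condition \eqref{comp-1} gives $\varphi_2\le c\varphi_1$, so $\sup_j 2^{j(s_2-s_1)}\varphi_2(2^{-j})/\varphi_1(2^{-j})<\infty$, and Proposition~\ref{P-bp-cont} yields continuity of $\bpa(Q)\hookrightarrow b^{t,\varphi_2}_{p_2,\infty}(Q)$ for any $s_2<t<s_1$. If $\varrho<1$, we rerun the proof of Proposition~\ref{P-bp-cont2} with $\sigma(s_1)$ replaced by $t$. Choose $s_3$ with $t=\varrho s_1+(1-\varrho)s_3$. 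Lemma~\ref{G-N}(ii) and Proposition~\ref{pinfinity}(i), applied with the finite supremum at $t$, then give $\bpa(Q)\hookrightarrow b^{t,\varphi_1^\varrho}_{p_2,\infty}(Q)$. We take $q_2=\infty$ in the target, which needs no restriction on $q_1$, after first embedding $\bpa(Q)$ into its $q_1=\infty$ version. Using \eqref{comp-1} we then pass to $b^{t,\varphi_2}_{p_2,\infty}(Q)$, since $\varphi_2\le c\varphi_1^\varrho$ and the definition is monotone in $\varphi$.

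\textbf{Part (ii).} We factor as in Corollary~\ref{P-bp-cont3}. First, $\bpa(Q)\hookrightarrow b^{t',\varphi_1^\varrho}_{p_2,\infty}(Q)$ for any $t'<\sigma(s_1)$, as in part (i). Second, by Proposition~\ref{P-bp-cont} with equal $p_2$, we embed into $b^{t,\varphi_2}_{p_2,\infty}(Q)$ for $t<\overline{\sigma}(s_1)$, choosing $t'$ close enough to $\sigma(s_1)$ that
\[
\sup_j 2^{j(t-t')}\varphi_2(2^{-j})\varphi_1(2^{-j})^{-\varrho}<\infty.
\]
This is possible by definition of $\overline{\sigma}$: pick $t<t''<\overline{\sigma}(s_1)$ and take $t'=\sigma(s_1)-(t''-t)$.

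\textbf{Final step, common to both parts.} It remains to show that $b^{t,\varphi_2}_{p_2,\infty}(Q)\hookrightarrow\bpb(Q)$ is compact for $s_2<t$. Let $\lambda^{(\ge j_0)}$ denote the levels $j\ge j_0$. In the norm \eqref{seq-bphi-Om}, each inner term at level $j$ is dominated by $2^{j(s_2-t)}$ times the corresponding term of the $b^{t,\varphi_2}_{p_2,\infty}$ norm, uniformly over $P$. Summing the geometric factor in $\ell_{q_2}$ then gives
\[
\|\lambda^{(\ge j_0)}\mid\bpb(Q)\|\lesssim 2^{j_0(s_2-t)}\|\lambda\mid b^{t,\varphi_2}_{p_2,\infty}(Q)\|.
\]
Cubes $P$ with $j_P>j_0$ are handled by the same factor. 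Together with finite-dimensionality of the levels $j<j_0$ over the bounded cube $Q$, this gives precompactness of the unit ball.

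The main obstacle is the $\varrho<1$ interpolation step. We must check that Lemma~\ref{G-N}(ii) and Proposition~\ref{pinfinity} apply with $t$ and the function $\varphi_1^\varrho$, in particular that $\varphi_1^\varrho\in\mathcal G_{p_2}$, and that the $q$-indices can be arranged: this works by using $q_1=\infty$ on the source side and $q=\infty$ on the target side. Any $q$-loss is absorbed by the strict gap in smoothness.
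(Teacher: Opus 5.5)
Your proposal is correct, and its overall architecture matches the paper's. You factor through a space of slightly larger smoothness, then use the strict smoothness gap together with finite-dimensionality of the low levels. In (ii), your chain through $b^{t',\varphi_1^\varrho}_{p_2,\infty}(Q)$, followed by Proposition~\ref{P-bp-cont} with equal integrability, is the paper's Substep~2.2, with your $t'$ playing the role of $\tau$. The condition $t'\ge t$ required by \eqref{cond3-bp}, which you do not check, follows from $\overline{\sigma}(s_1)\le\sigma(s_1)$; this is where \eqref{comp-2} enters. Your final step is the truncation argument that the paper imports from Step~1 of Theorem~\ref{th:comp}.

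The genuine difference is in (i) for $\varrho<1$. The paper lifts to $s_1=0$ and computes directly. The unit-ball condition at $P=Q_{j,m}$ gives $\varphi_1(2^{-j})|\lambda_{j,m}|\le 1$. This lets it dominate $\ell_{p_2}$-sums by $\ell_{p_1}$-sums and then sum a geometric series in $j$, so it obtains $b^{0,\varphi_1}_{p_1,q_1}(Q)\hookrightarrow b^{s_3,\varphi_2}_{p_2,q_2}(Q)$ for all $q_1,q_2$ at once.

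You instead rerun the proof of Proposition~\ref{P-bp-cont2} at the subcritical level $t<\sigma(s_1)$, using Lemma~\ref{G-N}(ii) with $\theta=1-\varrho$ and a bound on the $b^{s_3}_{\infty,\infty}$-norm. Both routes rest on the same H\"older inequality $\|\cdot\|_{\ell_{p_2}}\le\|\cdot\|_{\ell_\infty}^{1-\varrho}\|\cdot\|_{\ell_{p_1}}^{\varrho}$. The paper's version is self-contained. Yours is modular, and it shows that passing through $q=\infty$ on both sides, at the cost of part of the strict smoothness gap, removes the restriction $q_1\le\varrho q_2$ of Proposition~\ref{P-bp-cont2}.

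One point needs care. Proposition~\ref{pinfinity}(i), as stated, assumes \eqref{liminf}, which need not hold here. So you should bound $\|\lambda\,|\,b^{s_3}_{\infty,\infty}(Q)\|$ directly by $\|\lambda\,|\,b^{s_1,\varphi_1}_{p_1,\infty}(Q)\|$, using the one-line estimate at $P=Q_{j,m}$ from its proof. That estimate only needs
$\sup_j 2^{j(s_3-s_1)}\varphi_1(2^{-j})^{-1}=\bigl(\sup_j 2^{j(t-s_1)}\varphi_1(2^{-j})^{\varrho-1}\bigr)^{1/(1-\varrho)}<\infty$.
Bounding through the source in this way also avoids an issue in the paper's route. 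There, the target is embedded into $b^{s_3}_{\infty,\infty}(Q)$ and the resulting power of the target norm is absorbed, which tacitly requires knowing that norm is finite a priori; that would need a truncation argument.
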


\begin{proof}
  {\em Step 1}. We first prove (i). \\
  {\em Substep 1.1}~ If $\varrho=1$,  the statement is obvious since $\varphi_2(t)\leq C \varphi_1(t)$,   so the condition \eqref{cond2-bp} is satisfied with the same  smoothness in the source and the target space.  By Proposition~\ref{P-bp-cont} we thus conclude that
	\[
	\bpa(Q) \hra b^{s_1,\varphi_2}_{p_2,q_1}(Q)\hra \bpb(Q), 
	\]
	and the second embedding is compact when $s_2<s_1$  as assumed in this case. This follows by parallel arguments as presented in Step~1 of the proof of Theorem~\ref{th:comp}.

{\em Substep 1.2}. Now assume $\varrho<1$, that is, $p_1<p_2$. Then we may assume $s_1=0$ and $s_2<0$ by some lift argument, since $\bpa(Q)\hra \bpb(Q)$ if, and only if, $b^{0,\varphi_1}_{p_1,q_1}(Q) \hra b^{s_2-s_1,\varphi_2}_{p_2,q_2}(Q)$, including the compactness of the embeddings. So we are left to show the compactness of the latter embedding. Let $\lambda \in b^{0,\varphi_1}_{p_1,q_1}(Q)$ be such that $\|\lambda | b^{0,\varphi_1}_{p_1,q_1}(Q)\|=1$, that is,
\[
\sup_{P\subset Q} \frac{\varphi_1(2^{-j_P})}{|P|^{1/p_1}} \left(\sum_{j=j_P}^\infty 2^{-j\frac{\nd}{p_1}q_1} \left(\sum_{Q_{j,m}\subset P} |\lambda_{j,m}|^{p_1}\right)^{\frac{q_1}{p_1}}\right)^{{\frac{1}{q_1}}}=1.
\]
Then for all $j\geq j_P$,
\begin{equation}\label{LS-2}
 \varphi_1(2^{-j_P}) 2^{(j_P-j)\frac{\nd}{p_1}} \left(\sum_{Q_{j,m}\subset P} |\lambda_{j,m}|^{p_1}\right)^{\frac{1}{p_1}}\leq 1,
\end{equation}
leading to
\begin{equation} \label{LS-2a}
\sum_{Q_{j,m}\subset P} |\lambda_{j,m}|^{p_1} \leq   \varphi_1(2^{-j_P})^{-p_1} 2^{-(j_P-j)\nd}.
\end{equation}
Taking $P=Q_{j,m}$ we get for all $(j,m)\in \no\times \zd$ with $Q_{j,m}\subset Q$ by \eqref{LS-2} that
\begin{equation}\label{LS-3}
\varphi_1(2^{-j}) |\lambda_{j,m}| \leq 1,
\end{equation}
and thus, by monotonicity and $p_1<p_2$,
\[
  \varphi_1(2^{-j})^{p_2}  \sum_{m: Q_{j,m}\subset P} |\lambda_{j,m}|^{p_2} \leq
\varphi_1(2^{-j})^{p_1}  \sum_{m: Q_{j,m}\subset P} |\lambda_{j,m}|^{p_1}.
\]
Consequently, by \eqref{LS-2a},
\begin{align*}
  \varphi_1(2^{-j})^{p_2}  \sum_{m: Q_{j,m}\subset P} |\lambda_{j,m}|^{p_2} & \leq \ \varphi_1(2^{-j})^{p_1}   \varphi_1(2^{-j_P})^{-p_1} 2^{-(j_P-j)\nd},  
\end{align*}
thus
\[ \left(\sum_{m: Q_{j,m}\subset P} |\lambda_{j,m}|^{p_2}\right)^{\frac{1}{p_2}}  \leq \ \varphi_1(2^{-j})^{\varrho-1}   \varphi_1(2^{-j_P})^{-\varrho} 2^{-(j_P-j)\frac{\nd}{p_2}}.
  \]
  By assumption, $s_2<\sigma(s_1)$ and we can choose $s_3$ such that $s_2<s_3<\sigma<(s_1)\leq s_1=0$, and by \eqref{comp-1a} (with $s_1=0$),
  \[ \sup_{j\geq 0} 2^{{j} s_3 } \varphi_1(2^{-j})^{\varrho-1} < \infty. \]
  We arrive at
  \begin{align*}
    2^{j(s_2-\frac{\nd}{p_2})} \left(\sum_{Q_{j,m}\subset P} |\lambda_{j,m}|^{p_2}\right)^{\frac{1}{p_2}} &\leq 2^{j(s_2-s_3)} 2^{j(s_3-\frac{\nd}{p_2})} \varphi_1(2^{-j})^{\varrho-1} \varphi_1(2^{-j_P})^{-\varrho} 2^{(j-j_P)\frac{\nd}{p_2}}\\
                                                                                 &\leq \varphi_1(2^{-j_P})^{-\varrho} 2^{- j_P\frac{\nd}{p_2}} 2^{j s_3} \varphi_1(2^{-j})^{\varrho-1} 2^{j(s_2-s_3)},
  \end{align*}
  such that
  \begin{align*}
    \left(\sum_{j=j_P}^\infty 2^{j(s_2-\frac{\nd}{p_2})q_2} \left(\sum_{Q_{j,m}\subset P} |\lambda_{j,m}|^{p_2}\right)^{\frac{q_2}{p_2}}\right)^{\frac{1}{q_2}} & \leq
\varphi_1(2^{-j_P})^{-\varrho} 2^{- j_P\frac{\nd}{p_2}} \sup_{j\geq 0} 2^{j s_3} \varphi_1(2^{-j})^{\varrho-1} \left(\sum_{j=j_P}^\infty 2^{j(s_2-s_3)q_2}\right)^{\frac{1}{q_2}}\\                                            &\leq c\ \varphi_1(2^{-j_P})^{-\varrho} 2^{-j_P\frac{\nd}{p_2}} \\
    & \leq c' \ \varphi_2(2^{-j_P})^{-1} |P|^{\frac{1}{p_2}}
  \end{align*}
  in view of \eqref{sigma}. Finally, taking the supremum over all $P\subset Q$ {after} multiplying with $\varphi_2(2^{-j_P}) |P|^{-\frac{1}{p_2}} $,  leads to
  \[
    \|\lambda| \bpb(Q)\| \leq \ c = c \|\lambda | b^{0,\varphi_1}_{p_1,q_1}(Q)\| <\infty \]
  and ensures the continuity of $b^{0,\varphi_1}_{p_1,q_1}(Q) \hra \bpb(Q)$ for any $s_2<\sigma(0)\leq 0$. As for the compactness, we use the factorisation
  \[ b^{0,\varphi_1}_{p_1,q_1}(Q) \hra b^{s_3,\varphi_2}_{p_2,q_2}(Q) \hra \bpb(Q),\]
  where the first embedding is continuous by the above observation and the second one compact, where we benefit in both cases from our choice of $s_3$ with  $s_2<s_3<\sigma(0) \leq 0$.\\

  {\em Step 2}. We now care for (ii). \\
  {\em Substep 2.1}~
  We deal  {first with the case  $p_1\geq p_2$}. The continuity follows by Proposition~\ref{P-bp-cont} for all $s_2<\overline{\sigma}(s_1)$. As for the compactness, we may thus choose some $s_3$ with $s_2<s_3<\overline{\sigma}(s_1)$ and observe that
  \[
\bpa(Q) \hra b^{s_3,\varphi_2}_{p_2,q_2}(Q) \hra \bpb(Q),
  \]
where the first embedding is continuous and the second one compact, {by Proposition~\ref{P-bp-cont} and (i), respectively.}\\  

{\em Substep 2.2}.  Assume now $p_1<p_2$. We have $\varphi_1(t)\le \varphi_1(t)^\varrho \le {c} \varphi_2(t)$, $0<t\le 1$. %We may assume that $s_1=0$.  
	We factorise the embedding via 
\[
  b^{s_1,\varphi_1}_{p_1,q_1}(Q) \hra b^{\tau, \varphi_1^\varrho}_{p_2,q_2}(Q) \hra \bpb(Q),
  \]
  where $\tau \in (s_2,{s_1})$ has to be chosen appropriately. For the first embedding we can apply (i) and obtain compactness when $\tau<\sigma(s_1)$.
  If $s_2< {\overline{\sigma}(s_1)}$, then we can choose $\tau$ such that 
  \[ \sigma(s_1) + s_2 - {\overline{\sigma}(s_1)}< \tau <\sigma(s_1), \]
   which results in %the condition that
  \[ s_2-\tau <  {\overline{\sigma}(s_1)} - \sigma(s_1)<{ \overline{\sigma}(s_1) }-\tau .\]
 Thus for any $s< {\overline{\sigma}(s_1)}$ such that $s_2-\tau < s - \sigma(s_1) $  we have %For the latter embedding we apply Step~1 to conclude that it is compact when %$s_2<\overline{\sigma}(\tau)$, that is, when
   \[
    \sup_{j\geq 0} 2^{j(s_2-\tau)} \frac{\varphi_2(2^{-j})}{\varphi_1(2^{-j})^{\varrho}}\le \sup_{j\geq 0} 2^{j(s-\sigma(s_1))} \frac{\varphi_2(2^{-j})}{\varphi_1(2^{-j})^{\varrho}} <\infty.
   \]
   This implies the continuity of the second embedding by Proposition~\ref{P-bp-cont}.
\end{proof}

  \begin{remark}\label{rem-class-1}
    Let us briefly return to our Example~\ref{ex-class} and compare the above result with our (function space) result in Corollary~\ref{comp-class}(ii) for $\At=\Bt$. \\
    In case of (i) and $\varrho=1$, condition \eqref{comp-1} reads as $\frac{1}{p_1}-\tau_1-\frac{1}{p_2}+\tau_2\leq 0$ and we get compactness for $s_2<\sigma(s_1)=s_1$, which fits well together with the condition \eqref{tau-comp-u2'}. This is even known to be sharp (for the function space result). 
    If $\varrho<1$, then condition \eqref{comp-1} means $\tau_2\leq \tau_1 \varrho$ and we get compactness for $s_1-s_2>\nd (\frac{1}{p_1}-\frac{1}{p_2}-\tau_1+\tau_1\varrho)$ which again coincides with \eqref{tau-comp-u2} in that setting.\\ 
    In case of (ii), in Example~\ref{ex-class} we find  {$\overline{\sigma}(s_1)=s_1-\nd (\frac{1}{p_1}-\tau_1-\frac{1}{p_2}+\tau_2)$, }  hence (ii) ensures compactness when 
 $ s_2<s_1-\nd(\frac{1}{p_1}-\tau_1-\frac{1}{p_2}+\tau_2)$. 
{When $\varrho=1$ this} coincides with \eqref{tau-comp-u2'}, {as condition  \eqref{comp-2} means $\frac{1}{p_1}-\tau_1-\frac{1}{p_2}+\tau_2 \geq 0$.} In case of $\varrho<1$, {the above condition} coincides with \eqref{tau-comp-u2} since our assumption \eqref{comp-2} reads as $\tau_2\geq \frac{p_1}{p_2}\tau_1$. So also this case fits well together with \eqref{tau-comp-u2} again.\\
    So our new result in Proposition~\ref{Lemma-LS_1+2} suits perfectly in the example case, though we have only the sufficiency (in the sequence space setting) at the moment.    
  \end{remark}

\ignore{Now we care for the situation converse to \eqref{comp-1}. %, split in two cases $\varrho=1$ or $\varrho<1$.

\begin{proposition}\label{Lemma-LS_2}
Let $s_i\in\rr$, $0<p_i<\infty$, $0<q_i\leq \infty$, and $\varphi_i\in {\mathcal G}_{p_i}$, for $i=1,2$. {We assume without loss of generality that $\varphi_1(1)=\varphi_2(1)=1$.}
Assume that \eqref{comp-2} is satisfied.  
{Then  the embedding \eqref{embed1-bp} is compact, if $s_2<\overline{\sigma}(s_1)$, given by \eqref{comp-2a}.}
%\begin{enumerate}[{\bfseries\upshape(i)}]
%\item
%  If $p_2\leq p_1$, then  the embedding \eqref{embed1-bp} is compact, if $s_2<\overline{\sigma}(s_1)$.
%\item
%  If $p_1<p_2$,  then  the embedding \eqref{embed1-bp} is compact, if
%  \violet{$s_2 <  \overline{\sigma}(\sigma(s_1))$}. 
%\end{enumerate}
\end{proposition}

\begin{proof}
   \end{proof}
}

\

  %We follow Leszek's notes, p.4-6.

%%%%%%%%%%%%%%%%%%%%%%%%%%%%%%%%%%%%%%%%%%%%%%%%%%%%

\section{Embeddings of generalised Besov-Morrey spaces on domains}\label{sect-MB}

Recall that by  $\Omega$ we will always denote  a bounded {$C^{\infty}$}  domain in $\rd$. All spaces $\MA(\Omega)$ and $\Aphi(\Omega)$ are defined by restriction.

%\open{In the extension operator in \cite{IN19} it is assumed  to be a $C^{\infty}$ domain, so I think for the continuous embedding we need this assumption, or am I wrong? (SM) I think you are right, we should assume $C^\infty$ from the beginning. (DDH)}
%We first consider the  generalised Besov-Morrey spaces on $\Omega$.%  defined by restriction.

\begin{theorem}\label{th-cont-BM}
Let $s_i\in\rr$, $0<p_i<\infty$, $0<q_i\leq \infty$, and $\varphi_i\in {\mathcal G}_{p_i}$, for $i=1,2$. 
We assume without loss of generality that $\varphi_1(1)=\varphi_2(1)=1$. 
Let $ \varrho=\min(1,\frac{p_1}{p_2})$ and $\displaystyle\alpha_j= \sup_{0\le\nu\le j}\frac{\varphi_2(2^{-\nu})}{\varphi_1(2^{-\nu})^\varrho}$, $j\in \mathbb{N}_0$. 

There is a continuous embedding 
\begin{equation} \label{embed1O}
	\MBa (\Omega) \hookrightarrow \MBb(\Omega)
\end{equation}
if, and only if, 
\begin{align}
	\label{cond2O}
	\left\{   2^{j(s_2-s_1)} \alpha_j \frac{\varphi_1(2^{-j})^\varrho}{\varphi_1(2^{-j})}\right\}_{j\in \no} & \in \ell_{q^*}  \qquad \text{where}  \quad \frac{1}{q^*}=\left(\frac{1}{q_2}-\frac{1}{q_1}\right)_+  .
\end{align}
Moreover,  \eqref{embed1O} is compact if, and only if,  \eqref{cond2O} holds with $\ell_{\infty}$ replaced by $c_0$ if $q^*=\infty$, which means that
  \begin{align*}
		 2^{j(s_2-s_1)} \alpha_j \frac{\varphi_1(2^{-j})^\varrho}{\varphi_1(2^{-j})}\rightarrow 0\qquad \text{if}\qquad q_1\leq q_2. %q^*=\infty . 
	\end{align*}
\end{theorem}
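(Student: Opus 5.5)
The plan is to reduce both the continuity and the compactness statement to the sequence space results, Theorem~\ref{th-cont} and Theorem~\ref{th:comp}. The reduction rests on the wavelet characterisation of $\MB(\rd)$ from \cite{hms22}, combined with a bounded linear extension operator $\mathrm{ext}: \MB(\Omega)\to\MB(\rd)$. Such an operator is available for bounded $C^\infty$ domains; this is the reason for the standing smoothness assumption on $\Omega$, cf.\ \cite{IN19}.

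First I fix a sufficiently regular compactly supported Daubechies wavelet system. For $f\in \MB(\rd)$ supported in $\Omega_c$, only wavelets with $Q_{j,m}\subset Q$ meet the support, provided $Q\supset\Omega_{2c}$ is chosen as in Section~\ref{sect-seq}; here it may be necessary to enlarge $Q$ or rescale. Choose a cut-off $\psi\in C_0^\infty(\Omega_c)$ with $\psi\equiv1$ on $\Omega$.

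For sufficiency I factorise
\[
\MBa(\Omega)\xrightarrow{\ \psi\,\mathrm{ext}\ } \mathcal{N}^{s_1}_{\varphi_1,p_1,q_1}(\rd)\big|_{\supp\subset\Omega_c}\xrightarrow{\ T\ } \na(Q)\xrightarrow{\ \id\ }\nb(Q)\xrightarrow{\ T^{-1}\ }\mathcal{N}^{s_2}_{\varphi_2,p_2,q_2}(\rd)\xrightarrow{\ \mathrm{re}\ }\MBb(\Omega),
\]
where $T$ is the wavelet coefficient map and $\mathrm{re}$ is restriction. Multiplication by $\psi$ is a pointwise multiplier on $\MB(\rd)$. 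All operators other than $\id$ are bounded, so continuity, respectively compactness, of $\id$ under \eqref{cond2O}, respectively its $c_0$ version, transfers to \eqref{embed1O} by Theorem~\ref{th-cont} and Theorem~\ref{th:comp}.

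For necessity I will use the converse factorisation. Fix a cube $\widetilde Q$ with $\overline{\widetilde Q}\subset\Omega$; by a dilation/translation argument it suffices to consider sequences supported on dyadic cubes $Q_{j,m}\subset \widetilde Q$ with $j\ge j^\ast$. Such a sequence is mapped to $\sum\lambda_{j,m}\psi_{j,m}$, a function supported in $\Omega$, and the map $S:\lambda\mapsto (\sum\lambda_{j,m}\psi_{j,m})|_\Omega$ is bounded into $\MBa(\Omega)$. Conversely, the coefficient map composed with $\mathrm{ext}$ recovers $\lambda$ from $g\in\MBb(\Omega)$; to see this, extend $g$, apply $T$, and project onto the coefficients with $Q_{j,m}\subset\widetilde Q$, which are unchanged because the extension agrees with $g$ on $\Omega$. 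Hence continuity or compactness of \eqref{embed1O} implies the same for $\na(\widetilde Q)\hookrightarrow\nb(\widetilde Q)$.

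The main obstacle is that Theorems~\ref{th-cont} and \ref{th:comp} are stated on the fixed large cube $Q$, while necessity is tested on a smaller cube inside $\Omega$. I will handle this by checking that the extremal sequences in the necessity part of Theorem~\ref{th:comp} (and in \cite[Theorem~4.1]{hms22}) live at a single point $m=0$ or inside one cube $Q_{j_0,0}$, so they can be translated into $\widetilde Q$. The cost is a shift $j\mapsto j+j^\ast$, which changes the quantities in \eqref{cond2O} only by constants, since $\varphi_i$ is doubling; the $\ell_{q^\ast}$ and $c_0$ conditions are therefore unaffected.
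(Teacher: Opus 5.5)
Your proposal is correct and takes essentially the same route as the paper. The paper uses the wavelet characterisation from \cite{hms22} together with the extension operator from \cite{IN19} to transfer both the continuity and the compactness question to Theorems~\ref{th-cont} and \ref{th:comp}, following \cite{hs13}. For necessity, you localise to sequences whose wavelets are supported in a cube inside $\Omega$ and observe that the dyadic rescaling $j\mapsto j+j^\ast$ costs only constants, since each $\varphi_i$ is doubling; this correctly fills in the details that the paper's one-line proof leaves implicit.
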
	

\begin{proof}
The proof follows the argument presented in Substep 1.1 in the proof of Theorem~3.1 in \cite{hs13},  now based  on Theorems~\ref{th-cont} and \ref{th:comp},  taking into account the wavelet characterisation of the spaces  $\MB (\rd)$,  cf. \cite[Theorem~3.1]{hms22},  and the existence of a common extension operator for both spaces $\MBa (\Omega)$ and $\MBb(\Omega)$, cf. \cite[Theorem~5.4]{IN19}.
\end{proof}	

{
The above theorem gives us the  precise statement also for  the classical case $\mathcal{N}^{s_1}_{u_1,p_1,q_1} (\Omega) \hookrightarrow \mathcal{N}^{s_2}_{u_2,p_2,q_2}  (\Omega)$. In particular, the continuity part of the following corollary improves  Theorem~3.1  in \cite{hs13}, while the compactness criterion coincides with Theorem~4.1  proved there and recalled in Corollary~\ref{comp-class}(i) for convenience. %in \cite{hs13}. 

\begin{corollary}\label{comp-class_new}
Let $s_i\in\rr$, $0<p_i\le u_i<\infty$,   $0<q_i\leq \infty$ 
	%, and $\varphi_i\in {\mathcal G}_{p_i}$, 
	for $i=1,2$ and    $ \varrho=\min(1,\frac{p_1}{p_2})$.
	% and $\displaystyle\alpha_j= \sup_{0\le\nu\le j}\frac{\varphi_2(2^{-\nu})}{\varphi_1(2^{-\nu})^\varrho}$, $j\in \mathbb{N}_0$. 
	
	There is a continuous embedding 
	\begin{equation} \label{embed1O'}
		\mathcal{N}^{s_1}_{u_1,p_1,q_1} (\Omega) \hookrightarrow \mathcal{N}^{s_2}_{u_2,p_2,q_2}  (\Omega)
	\end{equation}
	if, and only if,
          \begin{align}
\text{\upshape\bfseries (i)}\ \frac{u_1}{u_2}\ge \varrho & \quad \text{and}  \nonumber\\
\label{cond2Ocl}
		&\frac{s_1-s_2}{d}   > \frac{p_1}{u_1}\left(\frac{1}{p_1}-\frac{1}{p_2}\right)_+  \qquad \text{or}\qquad
		  \frac{s_1-s_2}{d}  = \frac{p_1}{u_1}\left(\frac{1}{p_1}-\frac{1}{p_2}\right)_+\quad\text{and}\quad q_1\le q_2,
                 \intertext{or}\nonumber
\text{\bfseries\upshape (ii)}\ \frac{u_1}{u_2}< \varrho & \quad  \text{and}\nonumber\\
		 &\frac{s_1-s_2}{d}  > \frac{1}{u_1}-\frac{1}{u_2} \qquad \text{or} \label{cond2Ocl2} \qquad
		 \frac{s_1-s_2}{d}  = \frac{1}{u_1}-\frac{1}{u_2} \quad\text{and}\quad q_1\le q_2. 
	\end{align}%
	Moreover,  \eqref{embed1O'} is compact if, and only if, strict inequalities for the difference of smoothnesses $\frac{s_1-s_2}{d}$ hold. %}
	%the conditions \eqref{cond2Ocl} or \eqref{cond2Ocl2} hold.  
\end{corollary}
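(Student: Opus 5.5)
The plan is to specialise Theorem~\ref{th-cont-BM} to $\varphi_i(t)=t^{d/u_i}$, $i=1,2$. Recall from Remark~\ref{rem-coinc} that in this case $\MBa(\Omega)=\mathcal{N}^{s_1}_{u_1,p_1,q_1}(\Omega)$, and similarly for the target space. Everything then reduces to evaluating the sequence in \eqref{cond2O} explicitly and reading off when it lies in $\ell_{q^*}$, and when it tends to zero.

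\emph{Computing the sequence.} First I compute $\alpha_j$. We have
\[
\frac{\varphi_2(2^{-\nu})}{\varphi_1(2^{-\nu})^\varrho}=2^{\nu d(\frac{\varrho}{u_1}-\frac{1}{u_2})}.
\]
Taking the supremum over $0\le\nu\le j$ gives $\alpha_j=2^{jd(\frac{\varrho}{u_1}-\frac{1}{u_2})_+}$, as already noted in Remark~\ref{n-n-class}. Moreover $\varphi_1(2^{-j})^{\varrho-1}=2^{jd\frac{1-\varrho}{u_1}}$. Hence the sequence in \eqref{cond2O} equals $2^{-j\gamma}$, where
\[
\gamma=(s_1-s_2)-\frac{d(1-\varrho)}{u_1}-d\Big(\frac{\varrho}{u_1}-\frac{1}{u_2}\Big)_+ .
\]

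\emph{Case (i): $u_1/u_2\ge\varrho$.} Here the positive part vanishes. Since $1-\varrho=p_1(\frac{1}{p_1}-\frac{1}{p_2})_+$, the quantity $\gamma/d$ is exactly $\frac{s_1-s_2}{d}-\frac{p_1}{u_1}(\frac{1}{p_1}-\frac{1}{p_2})_+$.

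\emph{Case (ii): $u_1/u_2<\varrho$.} Here the positive part is $\frac{\varrho}{u_1}-\frac{1}{u_2}$, and it combines with the other term to give $\gamma/d=\frac{s_1-s_2}{d}-\frac{1}{u_1}+\frac{1}{u_2}$.

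\emph{Reading off the conditions.} A geometric sequence $2^{-j\gamma}$ belongs to $\ell_{q^*}$ if and only if either $\gamma>0$, or $\gamma=0$ and $q^*=\infty$. The latter means $q_1\le q_2$. This yields \eqref{cond2Ocl} and \eqref{cond2Ocl2}, respectively.

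For compactness, Theorem~\ref{th-cont-BM} demands membership in $c_0$ when $q_1\le q_2$, and membership in $\ell_{q^*}$ with $q^*<\infty$ otherwise. In both cases this is equivalent to $\gamma>0$. So compactness holds exactly when the inequalities are strict.

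I expect no step to be a real obstacle. The only care needed is the bookkeeping of $\varrho$ and the positive parts, since the two regimes $u_1/u_2\gtrless\varrho$ come from whether the supremum defining $\alpha_j$ is attained at $\nu=0$ or at $\nu=j$.
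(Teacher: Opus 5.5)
Your proposal is correct and follows the paper's proof: you specialise Theorem~\ref{th-cont-BM} to $\varphi_i(t)=t^{d/u_i}$, compute $\alpha_j=2^{jd(\frac{\varrho}{u_1}-\frac{1}{u_2})_+}$ (which is $1$ when $u_1/u_2\ge\varrho$ and $2^{jd(\frac{\varrho}{u_1}-\frac{1}{u_2})}$ otherwise), and then read off the $\ell_{q^*}$ and $c_0$ conditions for the resulting geometric sequence. Your explicit exponent $\gamma$ and the identity $1-\varrho=p_1(\frac{1}{p_1}-\frac{1}{p_2})_+$ simply spell out the bookkeeping that the paper leaves implicit.
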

\ignore{
	\begin{corollary}\label{comp-class_new}
		Let $s_i\in\rr$, $0<p_i\le u_i<\infty$,   $0<q_i\leq \infty$ 
		%, and $\varphi_i\in {\mathcal G}_{p_i}$, 
		for $i=1,2$ and    $ \varrho=\min(1,\frac{p_1}{p_2})$.
		% and $\displaystyle\alpha_j= \sup_{0\le\nu\le j}\frac{\varphi_2(2^{-\nu})}{\varphi_1(2^{-\nu})^\varrho}$, $j\in \mathbb{N}_0$. 
		
		There is a continuous embedding 
		\begin{equation} \label{embed1O'}
			\mathcal{N}^{s_1}_{u_1,p_1,q_1} (\Omega) \hookrightarrow \mathcal{N}^{s_2}_{u_2,p_2,q_2}  (\Omega)
		\end{equation}
		if, and only if, 
		\begin{align}
			\frac{u_1}{u_2}\ge \varrho\quad \text{and} \quad &\frac{s_1-s_2}{d}   > \frac{p_1}{u_1}\left(\frac{1}{p_1}-\frac{1}{p_2}\right)_+  \quad \text{or} \label{cond2Ocl}\\
			& \frac{s_1-s_2}{d}  = \frac{p_1}{u_1}\left(\frac{1}{p_1}-\frac{1}{p_2}\right)_+\quad\text{and}\quad q_1\le q_2; \nonumber 
			\\
			\intertext{or}
			\frac{u_1}{u_2}< \varrho\quad \text{and} \quad &\frac{s_1-s_2}{d}  > \frac{1}{u_1}-\frac{1}{u_2} \quad \text{or} \label{cond2Ocl2} \\
			&\frac{s_1-s_2}{d}  = \frac{1}{u_1}-\frac{1}{u_2} \quad\text{and}\quad q_1\le q_2. \nonumber
		\end{align}
		Moreover,  \eqref{embed1O'} is compact if, and only if,  the conditions \eqref{cond2Ocl} or \eqref{cond2Ocl2} hold.  
	\end{corollary}
}

\begin{proof}
  The corollary follows immediately from Theorem~\ref{th-cont-BM}.  If $\frac{u_1}{u_2}\ge \varrho$, then $\alpha_j=1$. If $\frac{u_1}{u_2}< \varrho$, then $\alpha_j=2^{jd(\frac{{\varrho}}{u_1}-\frac{1}{u_2})}$, recall also Remark~\ref{n-n-class}.  This covers the above two cases.
\end{proof}
}

\begin{remark}
As the previous theorem, also  Corollaries~\ref{cor1} and \ref{cor2} can be rewritten in terms of the embedding {\eqref{embed1O}}. 
 The compactness assertion in the last theorem holds true for   any bounded domain $\Omega$ in $\rd$, following a similar reasoning as the one explained, for instance, in the proof of Theorem~3.7 in \cite{HLMS2023}.
\end{remark}

{%\color{blue}
In the next corollary we consider special cases, where the conditions for the continuity/compactness of the embeddings take a simplified form.

\begin{corollary} \label{cor-liminfsup}
Let $s_i\in\rr$, $0<p_i<\infty$, $0<q_i\leq \infty$, and $\varphi_i\in {\mathcal G}_{p_i}$, for $i=1,2$. 
We assume without loss of generality that $\varphi_1(1)=\varphi_2(1)=1$. 
Let $ \varrho=\min(1,\frac{p_1}{p_2})$ and $\frac{1}{q^*}=\left(\frac{1}{q_2}-\frac{1}{q_1}\right)_+$. % and $\displaystyle\alpha_j= \sup_{0\le\nu\le j}\frac{\varphi_2(2^{-\nu})}{\varphi_1(2^{-\nu})^\varrho}$, $j\in \mathbb{N}_0$. 
\begin{enumerate}[{\bfseries\upshape(i)}]
\item  If
\begin{equation} \label{cond1-limsup}
\sup\left\{ \frac{\varphi_2(2^{-j})}{\varphi_1(2^{-j})^{\varrho}}: j\in \no \right\} <\infty,
%\limsup_{j\rightarrow \infty} %\frac{\varphi_2(2^{-j})}{\varphi_1(2^{-j})^{\varrho}} <\infty,
\end{equation}
then the embedding  {\eqref{embed1O}} holds if, and only if,
\begin{equation}\label{cond2-limsup0}
\left\{ 2^{j(s_2-s_1)} \varphi_1(2^{-j})^{\varrho-1}\right\}_{j\in \no} \in \ell_{q^*}.  
\end{equation}
\item Assume  $p_1\ge p_2$, i.e., $\varrho=1$,  and 
\begin{equation} \label{cond2-limsup}
	\sup\left\{ \frac{\varphi_2(2^{-j})}{\varphi_1(2^{-j})}: j\in \no \right\} =\infty.
%\limsup_{j\rightarrow \infty} \frac{\varphi_2(2^{-j})}{\varphi_1(2^{-j})} %=\infty.
\end{equation} 
%In case %of $p_1\geq p_2$ and $q_1\leq q_2$,  
The embedding  { \eqref{embed1O}} holds if, and only if,
\begin{equation} \label{cond3-limsup} 
%s_1\geq s_2 \qquad \text{and} \qquad   
\left\{ 2^{j(s_2-s_1)} \frac{\varphi_2(2^{-j})}{\varphi_1(2^{-j})}  \right\}_{j\in \no} \in \ell_{q^*}. 
\end{equation}
\item  Let  $p_1 < p_2$, i.e., $\varrho< 1$, and 
\begin{equation} \label{cond2a-limsup}
	\sup\left\{ \frac{\varphi_2(2^{-j})}{\varphi_1(2^{-j})^\varrho}: j\in \no \right\} =\infty.
	%\limsup_{j\rightarrow \infty} \frac{\varphi_2(2^{-j})}{\varphi_1(2^{-j})} %=\infty.
\end{equation} 
The embedding  { \eqref{embed1O}} holds if  the condition \eqref{cond3-limsup} is satisfied and $s_1-s_2>{d}\left(\frac{1}{p_1}-\frac{1}{p_2}\right)$ or  $s_1-s_2={d}\left(\frac{1}{p_1}-\frac{1}{p_2}\right)$ and $q_1\leq q_2$. %$q^*=\infty$. 

If we have the embedding   { \eqref{embed1O}}, then the condition  \eqref{cond3-limsup} holds.
%
%In case %of $p_1\geq p_2$ and 
%$q_1\leq q_2$,  the embedding   \eqref{embed1O} holds if, and only if,
%\begin{equation} \label{cond3a-limsup} 
%	s_1-s_2\geq \sigma_1+\sigma_2 \qquad \text{and} \qquad   
%	\left\{ 2^{j(s_2-s_1)} \frac{\varphi_2(2^{-j})}{\varphi_1(2^{-j})}  %\right\}_{j\in \no} \in \ell_{\infty}. 
%\end{equation}} 
\item In each case  (i) and (ii),  { \eqref{embed1O}} is compact if, and only if,   the conditions \eqref{cond2-limsup0} and \eqref{cond3-limsup} hold respectively   (with $c_0$ instead of  $\ell_{q^*}$ when $q^*=\infty$).  In case of (iii), {\eqref{embed1O}} is compact if the conditions \eqref{cond2-limsup0} and \eqref{cond3-limsup} hold (with $c_0$ instead of  $\ell_{\infty}$ when {$q_1\leq q_2$}). %$q^*=\infty$).
\end{enumerate}
\end{corollary}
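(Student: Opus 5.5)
All four parts will be read off from Theorem~\ref{th-cont-BM} by controlling the sequence
\[
\gamma_j:=2^{j(s_2-s_1)}\,\alpha_j\,\varphi_1(2^{-j})^{\varrho-1},\qquad \alpha_j=\sup_{0\le\nu\le j}\frac{\varphi_2(2^{-\nu})}{\varphi_1(2^{-\nu})^{\varrho}} .
\]
By that theorem, the embedding \eqref{embed1O} holds if, and only if, $\{\gamma_j\}_{j}\in\ell_{q^*}$. It is compact if, and only if, the same holds, with the additional requirement $\gamma_j\to 0$ when $q_1\le q_2$. So in each case the task reduces to comparing $\gamma_j$ with the simpler sequences in \eqref{cond2-limsup0} and \eqref{cond3-limsup}. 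Membership in $\ell_{q^*}$, and in $c_0$, is preserved under two-sided comparability and implied by domination.

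For (i), the normalisation $\varphi_1(1)=\varphi_2(1)=1$ gives $\alpha_j\ge \alpha_0=1$. Assumption \eqref{cond1-limsup} gives $\alpha_j\le C$. Hence $\alpha_j\sim 1$, so $\gamma_j\sim 2^{j(s_2-s_1)}\varphi_1(2^{-j})^{\varrho-1}$, and both the continuity and the compactness claims follow directly.

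For (ii) we have $\varrho=1$, so $\gamma_j=2^{j(s_2-s_1)}\alpha_j$. The inequality $\alpha_j\ge \varphi_2(2^{-j})/\varphi_1(2^{-j})$ shows that $\{\gamma_j\}\in\ell_{q^*}$ (respectively $c_0$) implies \eqref{cond3-limsup} (respectively its $c_0$ version). The converse is the main obstacle, because $\alpha_j$ is a running supremum. Here I would use \eqref{cond2-limsup}. The ratios $r_\nu=\varphi_2(2^{-\nu})/\varphi_1(2^{-\nu})$ are unbounded, so the indices $j$ at which the running supremum is attained, i.e.\ $\alpha_j=r_j$, form an infinite set $J$. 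For a general $j$, let $\nu(j)\le j$ be the last index in $J$ with $\nu(j)\le j$. Then
\[
\gamma_j=2^{j(s_2-s_1)}r_{\nu(j)}\le 2^{\nu(j)(s_2-s_1)}r_{\nu(j)},
\]
provided $s_2\le s_1$. This inequality $s_2\le s_1$ itself follows from \eqref{cond3-limsup}: along $J$ the sequence $r_j$ increases, so boundedness of $2^{j(s_2-s_1)}r_j$ on $J$ forces $s_2\le s_1$, in the spirit of Corollary~\ref{cor1}. This settles $\ell_\infty$ and $c_0$. For $q^*<\infty$, I would group the $j$ between consecutive elements $\nu_k<\nu_{k+1}$ of $J$. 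If $s_2<s_1$, the geometric sum $\sum_{\nu_k\le j<\nu_{k+1}}2^{j(s_2-s_1)q^*}$ is bounded by a constant times $2^{\nu_k(s_2-s_1)q^*}$. If $s_2=s_1$, then $q^*<\infty$ already fails for \eqref{cond3-limsup}, since $r_j\to\infty$ along $J$. Either way $\|\gamma\|_{\ell_{q^*}}\lesssim\|\{2^{j(s_2-s_1)}r_j\}\|_{\ell_{q^*}}$.

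For (iii), the necessity of \eqref{cond3-limsup} follows as before from $\gamma_j\ge 2^{j(s_2-s_1)}\varphi_2(2^{-j})/\varphi_1(2^{-j})$. For sufficiency, I would split $\gamma_j\le A_j+B_j$, where $A_j$ collects the terms in which the supremum in $\alpha_j$ is attained at $\nu=j$ (which give exactly \eqref{cond3-limsup}), and $B_j$ collects those attained at $\nu<j$. For $B_j$, write $\nu=\nu(j)$ as above, so that
\[
\gamma_j=2^{j(s_2-s_1)}\frac{\varphi_2(2^{-\nu})}{\varphi_1(2^{-\nu})}\Bigl(\frac{\varphi_1(2^{-\nu})}{\varphi_1(2^{-j})}\Bigr)^{1-\varrho}.
\]
The $\mathcal G_{p_1}$ property gives $\varphi_1(2^{-\nu})/\varphi_1(2^{-j})\le 2^{(j-\nu)d/p_1}$. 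Hence
\[
\gamma_j\le 2^{(j-\nu)(s_2-s_1+d(\frac1{p_1}-\frac1{p_2}))}\,2^{\nu(s_2-s_1)}\frac{\varphi_2(2^{-\nu})}{\varphi_1(2^{-\nu})}.
\]
Under $s_1-s_2>d(\frac1{p_1}-\frac1{p_2})$, the first factor decays geometrically in $j-\nu$, and the same block summation as in (ii) yields $\ell_{q^*}$ (and $c_0$). In the equality case this factor is bounded, which handles $q_1\le q_2$, i.e.\ $q^*=\infty$.

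Finally, (iv) follows because all the above comparisons preserve $c_0$ as well as $\ell_{q^*}$. Combined with the compactness part of Theorem~\ref{th-cont-BM}, this gives the stated compactness criteria, and in case (iii) only the sufficiency direction.
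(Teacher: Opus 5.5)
Your proposal is correct and is essentially the paper's proof. In (i) you use $\alpha_j\sim 1$; in (ii) and (iii) you split into blocks between consecutive indices where the running supremum $\alpha_j$ is attained, sum geometrically over each block, and in (iii) use the $\mathcal{G}_{p_1}$ bound $\varphi_1(2^{-\nu})/\varphi_1(2^{-j})\le 2^{(j-\nu)d/p_1}$ to get the factor $2^{(j-\nu)(s_2-s_1+d(\frac{1}{p_1}-\frac{1}{p_2}))}$. In (ii), \eqref{cond2-limsup} and \eqref{cond3-limsup} in fact force $s_2<s_1$ strictly, so your separate case $s_2=s_1$ is vacuous.

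For (iv), which the paper does not argue separately, your observation that all the comparisons preserve $c_0$ is what is needed. In case (iii), however, it proves compactness under \eqref{cond3-limsup} together with the smoothness hypothesis $s_1-s_2\ge d(\frac{1}{p_1}-\frac{1}{p_2})$ of (iii). That is not the statement's literal wording, \eqref{cond2-limsup0} and \eqref{cond3-limsup}. The literal version can fail when $s_1-s_2<d(\frac{1}{p_1}-\frac{1}{p_2})$: one can construct admissible $\varphi_1,\varphi_2$ for which both sequences are bounded (even null) while $\{2^{j(s_2-s_1)}\alpha_j\varphi_1(2^{-j})^{\varrho-1}\}_j$ is unbounded. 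So your reading is the right one, but you should say so rather than claim you obtain "the stated" criterion.
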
	

\begin{proof}
Part (i) is an immediate consequence of Theorem~\ref{th-cont-BM} as \eqref{cond1-limsup} implies that 
$$
\alpha_j= \sup_{0\le\nu\le j}\frac{\varphi_2(2^{-\nu})}{\varphi_1(2^{-\nu})^\varrho}\sim 1, \quad j\in\no.
$$ 

Now we deal with (ii). The assumption \eqref{cond2-limsup}  guarantees the existence of a strictly increasing sequence of  natural numbers $(j_k)_{k\in \no}$ such that the sequence
$\left\{ \frac{\varphi_2(2^{-j_k})}{\varphi_1(2^{-j_k})} \right\}_{k\in\no}$ is strictly increasing, tends to $\infty$ and 
$$
\alpha_{j}=  \sup_{0\le\nu\le j} \frac{\varphi_2(2^{-\nu})}{\varphi_1(2^{-\nu})}=  \frac{\varphi_2(2^{-j_k})}{\varphi_1(2^{-j_k})} \quad\text{if}\quad j_k\le j < j_{k+1},\quad k\in\no.
$$
First we assume that the condition \eqref{cond3-limsup} holds. This condition and \eqref{cond2-limsup} imply $s_1>s_2$. Let $q^*<\infty$. Then
\begin{align}
	\sum_{j=0}^\infty 2^{j(s_2-s_1)q^*}\alpha_j^{q^*} &= \sum_{k=0}^\infty \sum_{j=j_k}^{j_{k+1}-1} 2^{(j-j_k)(s_2-s_1)q^*}2^{j_k(s_2-s_1)q^*}\frac{\varphi_2(2^{-j_k})^{q^*}}{\varphi_1(2^{-j_k})^{q^*}}  \nonumber\\
	& =\sum_{k=0}^\infty 2^{j_k(s_2-s_1)q^*} \frac{\varphi_2(2^{-j_k})^{q^*}}{\varphi_1(2^{-j_k})^{q^*}} \sum_{j=j_k}^{j_{k+1}-1}  2^{(j-j_k)(s_2-s_1)q^*} < \infty \nonumber
\end{align}
since $s_1>s_2$, cf. \eqref{cond3-limsup}. Now the statement follows from  Theorem~\ref{th-cont-BM}. The proof in the case $q^*=\infty$ is similar. 

Now assume that  the embedding   {\eqref{embed1O}} holds with $p_1\ge p_2$ and \eqref{cond2-limsup}.  We prove the necessity of \eqref{cond3-limsup}. 
We have always
$$\alpha_j= \sup_{0\le\nu\le j} \frac{\varphi_2(2^{-\nu})}{\varphi_1(2^{-\nu})}\geq  \frac{\varphi_2(2^{-j})}{\varphi_1(2^{-j})}, \quad j\in \no.$$
Therefore the condition  \eqref{cond2O} implies {\eqref{cond3-limsup}}. 

To prove (iii) we deal in a similar way as above. Now we take the  sequence $\{j_k\}_{k\in\no}$ such that the sequence
$\left\{ \frac{\varphi_2(2^{-j_k})}{\varphi_1(2^{-j_k})^\varrho}  \right\}_{k\in\no}$ is strictly increasing, tends to $\infty$ and 
\begin{equation}\label{embp1p2}
\alpha_{j}= % \sup_{0\le\nu\le j} \frac{\varphi_2(2^{-\nu})}{\varphi_1(2^{-\nu})}= 
 \frac{\varphi_2(2^{-j_k})}{\varphi_1(2^{-j_k})^\varrho} \quad\text{if}\quad j_k\le j < j_{k+1},\quad k\in\no.
\end{equation}
Moreover, 
\begin{equation}\label{embp1p2-2}
	\frac{\varphi_1(2^{-j_k})}{\varphi_1(2^{-j})} \le 2^{(j-j_k)\frac{d}{p_1}}
 \qquad  \text{if}\quad j_k\le j < j_{k+1}, 
\end{equation} 	 
since $\varphi_1\in \mathcal{G}_{p_1}$.  Let $q^*<\infty$. Then
\begin{align}
	\sum_{j=0}^\infty 2^{j(s_2-s_1)q^*}\alpha_j^{q^*} \varphi_1(2^{-j})^{(\varrho-1)q^*} & = \sum_{k=0}^\infty \sum_{j=j_k}^{j_{k+1}-1} 2^{(j-j_k)(s_2-s_1)q^*}2^{j_k(s_2-s_1)q^*}\frac{\varphi_2(2^{-j_k})^{q^*}}{\varphi_1(2^{-j_k})^{\varrho q^*}}\varphi_1(2^{-j})^{(\varrho-1)q^*} \nonumber   \\
	& = \sum_{k=0}^\infty 2^{j_k(s_2-s_1)q^*}  \frac{\varphi_2(2^{-j_k})^{q^*}}{\varphi_1(2^{-j_k})^{q^*}} \sum_{j=j_k}^{j_{k+1}-1}  2^{(j-j_k)(s_2-s_1)q^*} %\frac{\varphi_1(2^{-j_k)})}{\varphi_1(2^{-j)})}
   \left(\frac{\varphi_1(2^{-j)})}{\varphi_1(2^{-j_k)})}\right)^{(\varrho-1)q^*} \nonumber \\
  &\le  \sum_{k=0}^\infty 2^{j_k(s_2-s_1)q^*}  \frac{\varphi_2(2^{-j_k})^{q^*}}{\varphi_1(2^{-j_k})^{q^*}}   \sum_{j=j_k}^{j_{k+1}-1}  2^{(j-j_k)(s_2-s_1-\frac{{d}}{p_2}+\frac{{d}}{p_1})q^*} %\frac{\varphi_1(2^{-j_k)})}{\varphi_1(2^{-j)})}
	 %\left(\frac{\varphi_1(2^{-j)})}{\varphi_1(2^{-j_k)})}\right)^{(\varrho-1)q^*}  
	 	 < \infty,  \nonumber
\end{align}
where the last but one inequality follows from \eqref{embp1p2-2} and $\varrho<1$ and the last one from \eqref{cond3-limsup} and $s_1-s_2>{d}\left(\frac{1}{p_1}-\frac{1}{p_2}\right)$. Now the statement follows from  Theorem~\ref{th-cont-BM}. The proof in the case $q^*=\infty$ is similar. 
\end{proof}

In the following corollaries we consider several special cases of Theorem~\ref{th-cont-BM}, where the source or target space is of Besov,  Lebesgue or Besov-Morrey type.

%%%%%%%%%%%%%%%%%%%%%%%%%%	
\begin{corollary}  \label{cor-emb-in-N}
	Let $s_i\in\rr$, $0<p_i<\infty$,  $0<q_i\leq \infty$, for $i=1,2$, and define
	$$ \varrho=\min\left(1,\frac{p_1}{p_2}\right) \qquad  \text{and} \qquad  \frac{1}{q^*}=\left(\frac{1}{q_2}-\frac{1}{q_1}\right)_+.$$ 
	\begin{enumerate}[{\bfseries\upshape(i)}]
		\item  Let  $\varphi\in {\mathcal G}_{p_1}$ and assume that for some $u_2\ge p_2$ it holds
		\begin{equation}\label{cond-emb-in-N}
			\liminf_{j\rightarrow \infty} \varphi(2^{-j})  2^{j\frac{d}{u_2\varrho}} >0.
		\end{equation}
		Then there is a continuous embedding 
		\begin{equation}\label{emb-in-N}
			\mathcal{N}^{s_1}_{\varphi,p_1,q_1}(\Omega)  \hookrightarrow \mathcal{N}^{s_2}_{u_2,p_2,q_2}(\Omega)
		\end{equation}
		if, and only if,
		\begin{align}\label{emb-in-Na}
			\left\{   2^{j(s_2-s_1)} \varphi(2^{-j})^{\varrho-1}\right\}_{j\in \no} & \in \ell_{q^*} . 
		\end{align}
	The embedding is compact if, and only if,  \eqref{emb-in-Na} holds (with $c_0$ instead of $\ell_{\infty}$ if {$q_1\leq q_2$}). 
	%	\item  Let  $\varphi\in {\mathcal G}_{p_1}$ and let  \red{$u_2\ge p_2$}  be such that  $ \varphi(t)t^{-\frac{d}{u_2\varrho}}$ is increasing on $(0,1)$. Then  \eqref{emb-in-N} holds
	%	if, and only if,
	%	\begin{align*}
	%		\left\{   2^{j(s_2-s_1-\frac{d}{u_2})} \varphi(2^{-j})^{-1}\right\}_{j\in \no} & \in \ell_{q^*} . 
	%	\end{align*}
	%	
	\item  Let  $\varphi\in {\mathcal G}_{p_1}$, $p_1\ge p_2$, and assume that for some $u_2\ge p_2$ it holds
	\begin{equation}\label{cond-emb-in-Nb}
		\liminf_{j\rightarrow \infty} \varphi(2^{-j})  2^{j\frac{d}{u_2}} = 0.
	\end{equation}
	Then there is a continuous embedding \eqref{emb-in-N} 	if, and only if,
	\begin{align}\label{emb-in-Nc}
		\left\{   2^{j(s_2-s_1 -\frac{d}{u_2})} \varphi(2^{-j})^{-1}\right\}_{j\in \no} & \in \ell_{q^*} . 
	\end{align}
	The embedding is compact if, and only if,  \eqref{emb-in-Nc} holds (with $c_0$ instead of $\ell_{\infty}$ if {$q_1\leq q_2$}). 
   	\item  Let  $\varphi\in {\mathcal G}_{p_1}$, $p_1 < p_2$, and assume that for some $u_2\ge p_2$ it holds
   	\begin{equation}\label{cond-emb-in-Nb'}
   		\liminf_{j\rightarrow \infty} \varphi(2^{-j})  2^{j\frac{d}{u_2\varrho}} = 0.
   	\end{equation}
   	Then there is a continuous embedding \eqref{emb-in-N} 	if \eqref{emb-in-Nc} holds and  $s_1-s_2>{d}\left(\frac{1}{p_1}-\frac{1}{p_2}\right)$ or  $s_1-s_2={d}\left(\frac{1}{p_1}-\frac{1}{p_2}\right)$ and $q_1\leq q_2$. %$q^*=\infty$. 
   	The embedding is compact if  \eqref{emb-in-Nc} holds (with $c_0$ instead of $\ell_{\infty}$ if {$q_1\leq q_2$}).       
   
		\item  Let  $\varphi\in {\mathcal G}_{p_2}$ and  assume that for some $u_1\geq p_1>0$ it holds
		\begin{equation}\label{cond-emb-from-N}
			\limsup_{j\rightarrow \infty} \varphi(2^{-j})  2^{j\frac{d \varrho}{u_1}}<\infty.
		\end{equation}
		Then there is a continuous embedding 
		\begin{equation}\label{emb-from-N}
			\mathcal{N}^{s_1}_{u_1,p_1,q_1}  (\Omega)  \hookrightarrow \mathcal{N}^{s_2}_{\varphi,p_2,q_2}(\Omega)
		\end{equation}
		if, and only if,
		%$$
		%\frac{s_1-s_2}{d} > \frac{p_1}{u_1} \left(\frac{1}{p_1}-\frac{1}{p_2}\right)_+ \qquad \text{or} \qquad  \frac{s_1-s_2}{d} > \frac{p_1}{u_1} \left(\frac{1}{p_1}-\frac{1}{p_2}\right)_+ \qquad \text{and} \qquad   q_1\leq q_2.
		%$$
		\begin{align*}
			\left\{   2^{j(s_2-s_1+\frac{d}{u_1}(1-\varrho))}\right\}_{j\in \no}  \in \ell_{q^*} . 
		\end{align*}
	%	\item  Let  $\varphi\in {\mathcal G}_{p_2}$ and let   $u_1>0$ be such %that  $ \varphi(t)t^{-\frac{d \varrho}{u_1}}$ is decreasing on $(0,1)$. %Then  \eqref{emb-from-N} holds
	%	if, and only if,
	%	\begin{align*}
	%		\left\{   2^{j(s_2-s_1+\frac{d}{u_1})} \varphi(2^{-j})\right\}_{j\in \no} & \in \ell_{q^*} . 
	%	\end{align*}
	\end{enumerate}
\end{corollary}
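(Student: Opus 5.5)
All four parts will be obtained by specialising Theorem~\ref{th-cont-BM}, Corollary~\ref{cor-liminfsup} and Remark~\ref{Rem-sigma}. For $\varphi_2(t)=t^{d/u_2}$ and $\varphi_1(t)=t^{d/u_1}$ we use that $\mathcal{N}^{s}_{u,p,q}(\Omega)$ coincides with the space obtained from $\varphi(t)=t^{d/u}\in\mathcal{G}_p$, normalised by $\varphi(1)=1$. In each part the work is to identify the size of
\[
\alpha_j=\sup_{0\le\nu\le j}\frac{\varphi_2(2^{-\nu})}{\varphi_1(2^{-\nu})^\varrho}
\]
and then to simplify condition \eqref{cond2O}.

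\emph{Part (i).} Here $\varphi_1=\varphi$ and $\varphi_2(t)=t^{d/u_2}$. Assumption \eqref{cond-emb-in-N} says $\varphi(2^{-j})^{\varrho}\gtrsim 2^{-jd/u_2}$ for large $j$. Together with positivity of $\varphi$ on the finitely many remaining indices, this gives \eqref{cond1-limsup}. Hence $\alpha_j\sim 1$, since $\alpha_j\ge \varphi_2(1)/\varphi_1(1)^\varrho=1$. Corollary~\ref{cor-liminfsup}(i), and its compactness part (iv), then give exactly \eqref{emb-in-Na} with the $c_0$ modification.

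\emph{Part (ii).} Now $\varrho=1$. Assumption \eqref{cond-emb-in-Nb} means $\liminf_j \varphi_1(2^{-j})/\varphi_2(2^{-j})=0$, that is, \eqref{cond2-limsup} holds. Corollary~\ref{cor-liminfsup}(ii) and (iv) give the criterion \eqref{cond3-limsup}. Since $\varphi_2(2^{-j})/\varphi_1(2^{-j})=2^{-jd/u_2}\varphi(2^{-j})^{-1}$, this is literally \eqref{emb-in-Nc}.

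\emph{Part (iii).} Now $\varrho<1$. Assumption \eqref{cond-emb-in-Nb'} gives $\sup_j \varphi_2(2^{-j})\varphi_1(2^{-j})^{-\varrho}=\infty$, which is \eqref{cond2a-limsup}. Corollary~\ref{cor-liminfsup}(iii) yields sufficiency of \eqref{cond3-limsup}, again identical to \eqref{emb-in-Nc}, under the stated smoothness conditions. The compactness claim follows from part (iv) of that corollary. That part also requires \eqref{cond2-limsup0} with $c_0$. I expect to derive it from \eqref{emb-in-Nc} and $s_1-s_2\ge d(\frac1{p_1}-\frac1{p_2})$, using $\varphi(2^{-j})\ge 2^{-jd/p_1}$ (the $\mathcal G_{p_1}$ property); this is the step I would check most carefully. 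If that reduction fails, I would instead run the block-sum estimate from the proof of Corollary~\ref{cor-liminfsup}(iii) with $c_0$ tails.

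\emph{Part (iv).} Here $\varphi_1(t)=t^{d/u_1}$ and $\varphi_2=\varphi$. Then
\[
\frac{\varphi_1(2^{-j})^{\varrho}}{\varphi_1(2^{-j})}=2^{jd(1-\varrho)/u_1},
\]
and assumption \eqref{cond-emb-from-N} gives $\varphi_2(2^{-\nu})2^{\nu d\varrho/u_1}\lesssim 1$. Hence $\alpha_j\sim 1$, and \eqref{cond2O} reduces to $\{2^{j(s_2-s_1+\frac d{u_1}(1-\varrho))}\}_j\in\ell_{q^*}$. This is the claimed condition, obtained directly from Theorem~\ref{th-cont-BM}.

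The main obstacle is only bookkeeping. We need to make sure that the $\liminf/\limsup$ hypotheses control the supremum over all $j\ge0$, including small $j$ where $\varphi>0$ and is monotone, and to handle the compactness claim in (iii) as indicated above.
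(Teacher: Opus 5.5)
Your proposal is correct. For (i) and (iv) it agrees with the paper: the hypotheses force $\alpha_j\sim 1$, and Corollary~\ref{cor-liminfsup}(i), or equivalently Theorem~\ref{th-cont-BM}, yields the criterion.

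For (ii) and (iii) you take a different route from the printed proof. The paper applies Theorem~\ref{th-cont-BM} directly. It asserts that the supremum defining $\alpha_j$ is attained at $\nu=j$, i.e.\ $\alpha_j=2^{-jd/u_2}\varphi(2^{-j})^{-\varrho}$. That identity would require $\varphi(t)\,t^{-d/(u_2\varrho)}$ to be monotone on $(0,1)$, and the $\liminf$ hypotheses in (ii) and (iii) do not give this.

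You instead use only the consequence $\sup_j \varphi_2(2^{-j})\varphi_1(2^{-j})^{-\varrho}=\infty$. You then invoke Corollary~\ref{cor-liminfsup}(ii)--(iv), whose record-time block decomposition handles non-monotone ratios. Your argument therefore rests exactly on the stated hypotheses. It reproduces the one-sided form of (iii), including its smoothness restriction. The paper's identity, where the monotonicity is available, would even give (iii) as an equivalence.

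The step you flagged in (iii) does go through:
\begin{itemize}
\item If $s_1-s_2>d(\frac1{p_1}-\frac1{p_2})$, then $\varphi(2^{-j})\ge 2^{-jd/p_1}$ (from $\varphi\in\mathcal{G}_{p_1}$ and $\varphi(1)=1$) gives $2^{j(s_2-s_1)}\varphi(2^{-j})^{\varrho-1}\le 2^{-j\varepsilon}$ for some $\varepsilon>0$.
\item In the equality case, which comes with $q_1\le q_2$, that sequence equals $\bigl(2^{-jd/p_1}\varphi(2^{-j})^{-1}\bigr)^{1-\varrho}$. Since $u_2\ge p_2$, the $c_0$ version of \eqref{emb-in-Nc} forces $2^{-jd/p_1}\varphi(2^{-j})^{-1}\to 0$.
\item For $q_1>q_2$, compactness coincides with continuity by Theorem~\ref{th-cont-BM}, so nothing beyond the continuity part is needed there.
\end{itemize}
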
	

\begin{proof} 
	Both (i) and (iv) are immediate consequences of  Corollary~\ref{cor-liminfsup}.
	%Both (i) and (iii)  follow immediately from  Theorem \ref{th-cont-BM}, since condition  \eqref{cond-emb-in-N} and \eqref{cond-emb-from-N}  lead, respectively,  to 
	%$$
	% \sup_{0\le\nu\le j}\frac{2^{-\nu \frac{d}{u_2}}}{\varphi(2^{-\nu})^{\varrho}} \sim 1  \qquad \text{and} \qquad   \sup_{0\le\nu\le j}\frac{\varphi(2^{-\nu})}{2^{-\nu \frac{d \varrho}{u_1}}} \sim 1  , \quad j\in \mathbb{N}_0.
	%$$
	As for (ii) and (iii), they follow directly from Theorem~\ref{th-cont-BM}, since by the assumptions made in (ii) and (iii) we have, respectively, 
	$$
	\sup_{0\le\nu\le j}\frac{2^{-\nu \frac{d}{u_2}}}{\varphi(2^{-\nu})^{\varrho}} = \frac{2^{-j \frac{d}{u_2}}}{\varphi(2^{-j})^{\varrho}} 
	\qquad \text{and} \qquad
	\sup_{0\le\nu\le j}\frac{\varphi(2^{-\nu})}{2^{-\nu \frac{d \varrho}{u_1}}} = \varphi(2^{-j}) 2^{j \frac{d \varrho}{u_1}} , \quad j\in \mathbb{N}_0.
	$$
\end{proof}

\begin{remark}  \label{rema} Assume that $0<p<\infty$ and $\varphi\in {\mathcal G}_{p}$. If there exists some $u\geq p$ so that 
	\begin{equation*}
		0< \liminf_{j\rightarrow \infty} \varphi(2^{-j})  2^{j\frac{d}{u}}  \leq  \limsup_{j\rightarrow \infty} \varphi(2^{-j})  2^{j\frac{d}{u}}<\infty,
	\end{equation*}
	then it follows from the last corollary that $\mathcal{N}^{s}_{\varphi,p,q}(\Omega)= \mathcal{N}^{s}_{u,p,q}(\Omega)$,
	in the sense of equivalent quasi-norms.  
	As an example,  $\mathcal{N}^{s}_{\varphi_{u,v},p,q}(\Omega)= \mathcal{N}^{s}_{u,p,q}(\Omega)$, for $\varphi_{u,v}$ defined by
	$$
	\varphi_{u,v}(t)=\begin{cases}
		t^{\frac{d}{u}} & \text{if} \quad 0<t<\varepsilon, \\
		c t^{\frac{d}{v}} & \text{if} \quad t \geq \varepsilon,
	\end{cases}
	$$
	where $u,v\geq p$, $\varepsilon>0$ and $c= \varepsilon^{\frac{d}{u}-\frac{d}{v}}$.
\end{remark}

\begin{remark}
Assume that $s\in\rr$, $0<p<\infty$, $0<q\leq \infty$ and $\varphi \in {\mathcal G}_{p}$. If  $\ds\liminf_{j\rightarrow \infty} \varphi(2^{-j})  2^{j\frac{d}{u}} >0$ for some $u\geq p$, then  
	${\mathcal N}^{s}_{\varphi,p,q} (\Omega)  \hookrightarrow {\mathcal N} ^{s}_{u,p,q}(\Omega)$; but the inclusion is strict if  $\ds\limsup_{j\rightarrow \infty} \varphi(2^{-j})  2^{j\frac{d}{u}} =\infty$.
\end{remark}
%%%%%%%%%%%%%%%%%%%%

\begin{corollary} \label{cor-N-in-B}
Let $s_i\in\rr$, {$0<p_1<\infty$, $0<p_2\le\infty$,}  $0<q_i\leq \infty$, for $i=1,2$,  and $\varphi\in {\mathcal G}_{p_1}$. 
There is a continuous embedding 
		\begin{equation}\label{emb-in-B}
				{\mathcal N}^{s_1}_{\varphi,p_1,q_1} (\Omega)  \hookrightarrow B^{s_2}_{p_2,q_2}(\Omega)
		\end{equation}
		if, and only if,
		\begin{align}
			\left\{   2^{j(s_2-s_1)} \varphi(2^{-j})^{\varrho-1}\right\}_{j\in \no} & \in \ell_{q^*} ,  \label{cond-emb-in-B}
	\end{align}
where  $ \varrho=\min(1,\frac{p_1}{p_2})$  {(in particular $\varrho= 0$ if $p_2=\infty$)} and $\frac{1}{q^*}=\left(\frac{1}{q_2}-\frac{1}{q_1}\right)_+$.  Moreover, \eqref{emb-in-B} is compact if, and only if,  \eqref{cond-emb-in-B} holds  
with $ \ell_{\infty}$ replaced by $c_0$ if {$q_1\leq q_2$}.
\end{corollary}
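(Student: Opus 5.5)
The plan is to reduce to Theorem~\ref{th-cont-BM}. When $0<p_2<\infty$ we use Remark~\ref{rem-coinc}: $B^{s_2}_{p_2,q_2}(\rd)=\mathcal{N}^{s_2}_{\varphi_2,p_2,q_2}(\rd)$ with $\varphi_2(t)=t^{d/p_2}$, which lies in $\mathcal{G}_{p_2}$ and satisfies $\varphi_2(1)=1$. Both spaces on $\Omega$ are defined by restriction, so \eqref{emb-in-B} is exactly \eqref{embed1O} with $\varphi_1=\varphi$ (normalised by $\varphi(1)=1$, which changes norms only by constants) and $\varphi_2=t^{d/p_2}$. Hence the continuity and compactness criteria follow once we show
\[
\alpha_j=\sup_{0\le \nu\le j} \frac{2^{-\nu d/p_2}}{\varphi(2^{-\nu})^\varrho}\sim 1,\qquad j\in\no .
\]
With $\alpha_j\sim1$, condition \eqref{cond2O} becomes \eqref{cond-emb-in-B}, and the $c_0$ version gives the compactness statement. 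Equivalently, this is case~(i) of Corollary~\ref{cor-liminfsup}, because \eqref{cond1-limsup} is precisely the boundedness of $\alpha_j$.

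The key step is this boundedness, and it comes from the $\mathcal{G}_{p_1}$ property. Since $\varphi\in\mathcal{G}_{p_1}$ and $\varphi(1)=1$, \eqref{Gp-def} gives $\varphi(2^{-\nu})\ge 2^{-\nu d/p_1}$ for $\nu\ge0$, and therefore $\varphi(2^{-\nu})^\varrho\ge 2^{-\nu d\varrho/p_1}$. If $p_1\ge p_2$, then $\varrho=1$ and $d/p_1\le d/p_2$, so $2^{-\nu d/p_2}\le 2^{-\nu d/p_1}\le\varphi(2^{-\nu})$. If $p_1<p_2$, then $\varrho/p_1=1/p_2$, so $2^{-\nu d/p_2}=2^{-\nu d\varrho/p_1}\le \varphi(2^{-\nu})^\varrho$. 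In both cases the ratio is at most $1$ and equals $1$ at $\nu=0$. So $\alpha_j=1$, and \eqref{cond2O} reduces to \eqref{cond-emb-in-B}.

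The main obstacle is the endpoint $p_2=\infty$, which Theorem~\ref{th-cont-BM} does not cover. Here $\varrho=0$ and the condition reads $\{2^{j(s_2-s_1)}\varphi(2^{-j})^{-1}\}\in\ell_{q^*}$. The plan is to use the wavelet characterisation of $B^{s_2}_{\infty,q_2}$ on $\rd$ together with a common extension operator, as in the proof of Theorem~\ref{th-cont-BM}. This transfers the problem to the sequence embedding $n^{s_1}_{\varphi,p_1,q_1}(Q)\hookrightarrow \ell_{q_2}(2^{js_2}\ell_\infty^{2^{jd}|Q|})$ (up to the usual shift $2^{-jd/p}$ in the normalisation, which cancels here since $p=\infty$). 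Corollary~\ref{cor2a} characterises continuity of this embedding by \eqref{embednba}, which is exactly \eqref{cond-emb-in-B} with $\varrho=0$. Corollary~\ref{nbbncomp} gives compactness with $c_0$ when $q_1\le q_2$. One must check that the extension operator from \cite{IN19} and the $B^s_{\infty,q}$ wavelet isomorphism (standard, \cite{Tri08}) are compatible, i.e.\ that a single sufficiently regular wavelet system serves both spaces. This is routine once the smoothness of the wavelets is chosen large enough.
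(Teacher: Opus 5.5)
Your proposal is correct and follows the paper's proof. You show $\alpha_j=1$ from $\varphi(2^{-\nu})\ge 2^{-\nu d/p_1}$, treating $p_1\ge p_2$ and $p_1<p_2$ separately, then apply Theorem~\ref{th-cont-BM}, and handle $p_2=\infty$ through Corollaries~\ref{cor2a} and \ref{nbbncomp}; the paper does exactly this.

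Your compatibility concern at $p_2=\infty$ can be bypassed. By \eqref{M1p}, $B^{s_2}_{\infty,q_2}(\rd)=\mathcal{N}^{s_2}_{\varphi_0,p,q_2}(\rd)$ with $\varphi_0\equiv 1\in\mathcal{G}_p$, so this endpoint already lies within the framework of Theorem~\ref{th-cont-BM>}. That is implicitly what Corollary~\ref{cor2a} uses via Remark~\ref{rem-specialcase}.
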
	

\begin{proof}
Let us assume first that $p_1\geq p_2$. Then $\varrho=1$ and $\varphi\in {\mathcal G}_{p_1}\subset {\mathcal G}_{p_2} $.  As a consequence, we have that
$$
\alpha_j= \sup_{0\le\nu\le j}\frac{2^{-\nu \frac{d}{p_2}}}{\varphi(2^{-\nu})}=1, \quad j\in \mathbb{N}_0,
$$
and hence the desired  equivalence follows by Theorem~\ref{th-cont-BM}.

Now consider the case $p_1<p_2$. Then $\varrho=\frac{p_1}{p_2}$ and  since $\varphi\in {\mathcal G}_{p_1}$, we have
$$
\alpha_j= \sup_{0\le\nu\le j}\frac{2^{-\nu \frac{d}{p_2}}}{\varphi(2^{-\nu})^{\frac{p_1}{p_2}}}=  \sup_{0\le\nu\le j} \left[\frac{2^{-\nu \frac{d}{p_1}}}{\varphi(2^{-\nu})}\right]^{\frac{p_1}{p_2}}=1  , \quad j\in \mathbb{N}_0.
$$
Once more  Theorem~\ref{th-cont-BM} entails the desired equivalence. {If $p_2=\infty$, then the statement follows from Corollary~\ref{cor2a} and Corollary~\ref{nbbncomp}.}
\end{proof}

\begin{remark}	
\begin{enumerate}[{\bfseries\upshape(i)}]
\item If $p_1\geq p_2$, it turns out that  ${\mathcal N}^{s_1}_{\varphi,p_1,q_1} (\Omega)  \hookrightarrow B^{s_2}_{p_2,q_2}(\Omega)$ if, and only if, $s_1>s_2$ or   $s_1=s_2$ and $q_1\leq q_2$. Moreover, the embedding is compact if, and only if,  $s_1>s_2$.  
In particular, ${\mathcal N}^{s}_{\varphi,p,q} (\Omega)  \hookrightarrow B^{s}_{p,q}(\Omega)$ for any $s\in\rr$, $0<p<\infty$, $0<q\leq \infty$ and $\varphi \in {\mathcal G}_{p}$.
\item  In the special case of $\varphi(t)=t^{-d/u_1}$ with $0<p_1\leq u_1<\infty$, hence ${\mathcal N}^{s_1}_{\varphi,p_1,q_1} (\Omega)= {\mathcal N}^{s_1}_{u_1,p_1,q_1} (\Omega)$,    we recover Corollary~3.5 of \cite{hs13}, with a slight improvement in one of the limiting cases. % the case of $p_1 <p_2$, $s_1-s_2=d \frac{p_1}{u_1}\left(\frac{1}{p_1}-\frac{1}{p_2}\right)$
\end{enumerate}
\end{remark}

\begin{corollary}\label{cor-B-in-N}
		Let  $0<p_1\le \infty$, $0<p_2< \infty$, $s_i\in\rr$,  $0<q_i\leq \infty$, for $i=1,2$,  and $\varphi\in {\mathcal G}_{p_2}$. 
		There is a continuous embedding 
		\begin{equation}\label{emb-in-B'}
		B^{s_1}_{p_1,q_1}(\Omega)\hookrightarrow 	{\mathcal N}^{s_2}_{\varphi,p_2,q_2} (\Omega)  
		\end{equation}
		if $s_1-\frac{d}{p_1}>s_2$ or $s_1-\frac{d}{p_1}=s_2$ and {$q_1\leq q_2$.}%$q^*=\infty$.  
		
The embedding 	\eqref{emb-in-B'} is compact if  $s_1-\frac{d}{p_1}>s_2$. 
 \end{corollary}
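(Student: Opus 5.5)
We would prove Corollary~\ref{cor-B-in-N} by factorisation through the space $B^{s_1-\frac{d}{p_1}}_{\infty,q_1}(\Omega)$, treating the case $p_1=\infty$ directly. The plan has three steps.

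\emph{Step 1 (the classical embedding).} Recall the classical Sobolev-type embedding
\[
B^{s_1}_{p_1,q_1}(\Omega)\hookrightarrow B^{s_1-\frac{d}{p_1}}_{\infty,q_1}(\Omega),
\]
valid for all $0<p_1\le\infty$. On the sequence-space level this is just $2^{j(s_1-\frac{d}{p_1})}\sup_m|\lambda_{j,m}|\le 2^{j(s_1-\frac{d}{p_1})}\big(\sum_m|\lambda_{j,m}|^{p_1}\big)^{1/p_1}$. Hence it suffices to prove
\[
B^{s_1-\frac{d}{p_1}}_{\infty,q_1}(\Omega)\hookrightarrow \mathcal{N}^{s_2}_{\varphi,p_2,q_2}(\Omega),
\]
and to prove it compact when $s_1-\frac{d}{p_1}>s_2$.

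\emph{Step 2 (the embedding from $B_{\infty,q_1}$).} This is exactly the situation of Corollary~\ref{cor2b}, with source smoothness $s_1-\frac{d}{p_1}$ and target $n^{s_2}_{\varphi,p_2,q_2}(Q)$. Indeed, the sequence space of $B^{\sigma}_{\infty,q}$ restricted to $Q$ is $\ell_{q}(2^{j\sigma}\ell_\infty^{2^{jd}|Q|})$. By Corollary~\ref{cor2b}, the embedding holds if, and only if, either $s_1-\frac{d}{p_1}>s_2$, or $s_1-\frac{d}{p_1}=s_2$ and $q_1\le q_2$. By Corollary~\ref{nbbncomp}, it is compact if, and only if, $s_1-\frac{d}{p_1}>s_2$.

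We could also verify this directly. The estimate
\[
\varphi(2^{-\nu})2^{(\nu-j)\frac{d}{p_2}}\Big(\sum_{Q_{j,m}\subset Q_{\nu,k}}|\lambda_{j,m}|^{p_2}\Big)^{1/p_2}\le \varphi(2^{-\nu})\sup_m|\lambda_{j,m}|\le \sup_m|\lambda_{j,m}|
\]
uses $\varphi(2^{-\nu})\le1$ for $\nu\ge0$. For $\nu<0$ only finitely many cubes are involved, since $Q$ is bounded.

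\emph{Step 3 (transfer to function spaces).} We pass from sequence spaces to $\Omega$ exactly as in the proof of Theorem~\ref{th-cont-BM}. This uses three ingredients:
\begin{itemize}
\item the wavelet characterisations of $B^{s}_{\infty,q}(\rd)$ (classical) and of $\MB(\rd)$ from \cite{hms22};
\item a common extension operator, from \cite{IN19} together with the classical one for Besov spaces;
\item restriction.
\end{itemize}
Compactness of the composition follows because the first factor is continuous and the second is compact.

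\emph{Expected obstacle.} The main obstacle is the case $p_1=\infty$ in Step 1 and the need for a common extension operator: the wavelet systems and extension operators must be compatible across the Besov and generalised Besov-Morrey scales. A sufficiently smooth Daubechies wavelet system and a Rychkov-type universal extension should handle both simultaneously. If that fails, an alternative is to first embed $B^{s_1}_{p_1,q_1}(\Omega)\hookrightarrow B^{s_1-\frac{d}{p_1}}_{\infty,q_1}(\Omega)$ by restriction-extension, and then use the $\rd$ sequence inequality localised to $Q$.
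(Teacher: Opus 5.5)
Your proposal is correct and is essentially the paper's argument. The paper also factorises through a $B_{\infty}$-space and invokes Corollaries~\ref{cor2b} and \ref{nbbncomp}. The only difference is the intermediate space. The paper uses $B^{s_2}_{\infty,q_2}(\Omega)$, so the smoothness gap and the compactness sit in the classical embedding $B^{s_1}_{p_1,q_1}(\Omega)\hookrightarrow B^{s_2}_{\infty,q_2}(\Omega)$. You use $B^{s_1-\frac{d}{p_1}}_{\infty,q_1}(\Omega)$, so they sit in the second factor instead.

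Your \emph{expected obstacle} does not arise. By \eqref{M1p}, $B^{s}_{\infty,q}(\rd)=\mathcal{N}^{s}_{\varphi_0,p_2,q}(\rd)$ with $\varphi_0\equiv 1\in\mathcal{G}_{p_2}$. Hence the wavelet characterisation of \cite{hms22} and the extension operator of \cite{IN19} cover both factors at once.
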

\begin{proof}
	The statement follows from the following  factorisation
\[B^{s_1}_{p_1,q_1}(\Omega)\hookrightarrow B^{s_2}_{\infty,q_2}(\Omega)\hookrightarrow	{\mathcal N}^{s_2}_{\varphi,p_2,q_2} (\Omega),\]
Corollary~\ref{cor2b}, Corollary~\ref{nbbncomp} and the properties of embeddings between classical Besov spaces. 
\end{proof}

\begin{corollary} \label{cor-Lr}
Let $s\in\rr$, $0<p<\infty$, $0<q\leq \infty$,  $\varphi\in {\mathcal G}_{p}$, and $1\leq r \leq \infty$. Let  $ \varrho=\min(1,\frac{p}{r})$ and $t_1,t_2$ be defined by
$$  \frac{1}{t_1}=\left(\frac{1}{\min(r,2)}-\frac{1}{q}\right)_+ \qquad \text{and} \qquad   \frac{1}{t_2}=\left(\frac{1}{\max(r,2)}-\frac{1}{q}\right)_+ ,$$
where in the latter case we assume $r>1$ or put $t_2=\infty$ if $r=1$.
Then there is a continuous embedding 
\begin{equation}\label{emb-in-L}
			\mathcal{N}^{s}_{\varphi,p,q}(\Omega)    \hookrightarrow L_{r}(\Omega)
\end{equation}
if
\begin{align}  \label{cond-emb-in-L}
\left\{  2^{-sj} \varphi(2^{-j})^{\varrho-1} 
\right\}_{j\in \no}  \in \ell_{t_1} .
\end{align}
Conversely,  \eqref{emb-in-L}  implies
\begin{align*}
	\left\{   2^{-sj} \varphi(2^{-j})^{\varrho-1}\right\}_{j\in \no} & \in \ell_{t_2}.
\end{align*}
 \end{corollary}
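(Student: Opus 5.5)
We reduce the statement to Corollary~\ref{cor-N-in-B} by sandwiching $L_r(\Omega)$ between two classical Besov spaces. The standard embeddings between Besov and Lebesgue spaces on bounded smooth domains read
\[
B^0_{r,\min(r,2)}(\Omega)\hookrightarrow L_r(\Omega)\hookrightarrow B^0_{r,\max(r,2)}(\Omega),\qquad 1<r<\infty .
\]
At the endpoints we use instead $B^0_{1,1}(\Omega)\hookrightarrow L_1(\Omega)\hookrightarrow B^0_{1,\infty}(\Omega)$ and $B^0_{\infty,1}(\Omega)\hookrightarrow L_\infty(\Omega)\hookrightarrow B^0_{\infty,\infty}(\Omega)$. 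In all three cases the left-hand microscopic index is $\min(r,2)$. On the right we get $\max(r,2)$ for $1<r\le\infty$, while for $r=1$ we get $\infty$; this is exactly why $t_2=\infty$ is put when $r=1$.

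\emph{Sufficiency.} Assume \eqref{cond-emb-in-L}. Apply Corollary~\ref{cor-N-in-B} with $p_1=p$, $s_1=s$, $q_1=q$ and target $B^0_{r,\min(r,2)}(\Omega)$, that is, $p_2=r$ (allowed up to $\infty$), $s_2=0$, $q_2=\min(r,2)$. Then $\varrho=\min(1,p/r)$ is exactly the $\varrho$ of the corollary. Moreover $\frac1{q^*}=(\frac1{\min(r,2)}-\frac1q)_+=\frac1{t_1}$. Condition \eqref{cond-emb-in-B} thus becomes $\{2^{-sj}\varphi(2^{-j})^{\varrho-1}\}_j\in\ell_{t_1}$, which is our hypothesis. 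Hence $\mathcal N^s_{\varphi,p,q}(\Omega)\hookrightarrow B^0_{r,\min(r,2)}(\Omega)\hookrightarrow L_r(\Omega)$.

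\emph{Necessity.} Suppose \eqref{emb-in-L} holds. Composing with $L_r(\Omega)\hookrightarrow B^0_{r,\max(r,2)}(\Omega)$ (resp.\ $B^0_{1,\infty}(\Omega)$ when $r=1$) gives $\mathcal N^s_{\varphi,p,q}(\Omega)\hookrightarrow B^0_{r,q_2}(\Omega)$ with $q_2=\max(r,2)$, or $q_2=\infty$ if $r=1$. The "only if" part of Corollary~\ref{cor-N-in-B} now yields membership in $\ell_{q^*}$ with $\frac1{q^*}=(\frac1{q_2}-\frac1q)_+=\frac1{t_2}$, which is the asserted necessary condition.

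The only delicate point is justifying the two Lebesgue--Besov sandwich embeddings on $\Omega$, including $r=1$ and $r=\infty$. We transfer them from $\rd$ using the definition by restriction together with a bounded extension operator for $L_r$ and $B^0_{r,q}$ on $C^\infty$ domains. On $\rd$ they are the classical facts $F^0_{r,2}=L_r$ (Littlewood--Paley, $1<r<\infty$) combined with \eqref{elem-gen} in the case $\varphi(t)=t^{d/r}$. The endpoint cases are the standard elementary embeddings. The rest is bookkeeping of $q^*$ against $t_1$ and $t_2$.
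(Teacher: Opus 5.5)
Your route is the paper's own. The paper proves the corollary with exactly the chains $\mathcal N^s_{\varphi,p,q}(\Omega)\hookrightarrow B^0_{r,\min(r,2)}(\Omega)\hookrightarrow L_r(\Omega)$ and $\mathcal N^s_{\varphi,p,q}(\Omega)\hookrightarrow L_r(\Omega)\hookrightarrow B^0_{r,\max(r,2)}(\Omega)$ (resp.\ $B^0_{1,\infty}(\Omega)$ if $r=1$), combined with Corollary~\ref{cor-N-in-B}. For $1\le r<\infty$ your proposal is correct. The necessity part is also correct for $r=\infty$, since $L_\infty\hookrightarrow B^0_{\infty,\infty}$ matches $t_2=\infty$.

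There is a genuine error in the sufficiency part at $r=\infty$. You write that ``in all three cases the left-hand microscopic index is $\min(r,2)$''. But for $r=\infty$ you have $\min(r,2)=2$, whereas the embedding you actually quote is $B^0_{\infty,1}(\Omega)\hookrightarrow L_\infty(\Omega)$. Your sufficiency step applies Corollary~\ref{cor-N-in-B} with target $B^0_{\infty,2}(\Omega)$, so it needs $B^0_{\infty,2}(\Omega)\hookrightarrow L_\infty(\Omega)$, which is false.

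This cannot be patched, because the sufficiency claim itself fails for $r=\infty$. Take $\varphi(t)=t^{d/p}$, so that $\mathcal N^s_{\varphi,p,q}(\Omega)=B^s_{p,q}(\Omega)$, together with $s=d/p$ and $q=2$. Then $\varrho=0$, $\frac1{t_1}=(\frac12-\frac12)_+=0$, and $2^{-sj}\varphi(2^{-j})^{-1}=1$, so the hypothesis holds. Yet $B^{d/p}_{p,2}(\Omega)\not\hookrightarrow L_\infty(\Omega)$, since at $s=d/p$ one needs $q\le 1$. Concretely, $f=\sum_j a_j\chi(2^j(\cdot-x_0))$ is a counterexample, where $\chi\ge0$ is a smooth bump, $x_0\in\Omega$, and $a_j\ge0$ with $a\in\ell_2\setminus\ell_1$. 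This $f$ is an atomic sum with one atom per level, so its quasi-norm is $\lesssim\|a\|_{\ell_2}$, but it is unbounded near $x_0$.

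The paper's proof writes the same chain ``for $r\geq 1$'' and so has the same defect. What your sandwich actually yields at $r=\infty$ is the corrected statement with target $B^0_{\infty,1}(\Omega)$, i.e.\ with $\frac1{t_1}=(1-\frac1q)_+$ instead of $(\frac12-\frac1q)_+$. Alternatively, restrict the sufficiency assertion to $1\le r<\infty$.
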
	
		
\begin{proof} For $r\geq 1$, if condition \eqref{cond-emb-in-L} holds, using 
 Corollary~\ref{cor-N-in-B}, we have the following  chain of embeddings
$$
\mathcal{N}^{s}_{\varphi,p,q}(\Omega)    \hookrightarrow B^0_{r,\min(r,2)}(\Omega) \hookrightarrow L_{r}(\Omega).
$$
On the other hand,  the continuity of the embedding \eqref{emb-in-L}  leads to
$$
\mathcal{N}^{s}_{\varphi,p,q}(\Omega)    \hookrightarrow L_{r}(\Omega) \hookrightarrow  B^0_{r,\max(r,2)}(\Omega) \quad \text{if} \quad  r>1,
$$
and, in case of $r=1$, to 
$$
\mathcal{N}^{s}_{\varphi,p,q}(\Omega)    \hookrightarrow L_{1}(\Omega) \hookrightarrow  B^0_{1,\infty}(\Omega).
$$
Then  Corollary~\ref{cor-N-in-B} implies $\left\{   2^{-sj} \varphi(2^{-j})^{\varrho-1}\right\}_{j\in \no} \in \ell_{t_2}$ .
\end{proof}	
	
	\begin{corollary}
	Let $s\in\rr$, $0<p<\infty$, $0<q\leq \infty$, $r=\max(1,p)$ and   $\varphi\in {\mathcal G}_{p}$.  Then there is a continuous embedding 
	\begin{equation}\label{emb-in-L0}
		\mathcal{N}^{0}_{\varphi,p,q}(\Omega)    \hookrightarrow L_{r}(\Omega)
	\end{equation}
	if $q\le\min(r,2)$. Conversely,  \eqref{emb-in-L0}  implies  $q\le\max(r,2)$ if $p>1$, and no further assumption on $q$ %$q\le\infty$ 
        if $p\le 1$.
	\end{corollary}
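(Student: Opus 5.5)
The plan is to specialise Corollary~\ref{cor-Lr} to $s=0$ and $r=\max(1,p)$, so that everything reduces to checking when the sequence
\[
\left\{\varphi(2^{-j})^{\varrho-1}\right\}_{j\in\no}
\]
lies in $\ell_{t_1}$ or $\ell_{t_2}$. Here $\varrho=\min(1,p/r)$. If $p\ge 1$ then $r=p$ and $\varrho=1$. If $p<1$ then $r=1$ and $\varrho=\min(1,p)=p<1$.

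\emph{Case $p\geq1$.} Here $\varrho=1$, so the sequence is identically $1$. Its membership in $\ell_{t}$ holds if, and only if, $t=\infty$.
\begin{itemize}
\item[] For the sufficient condition, $t_1=\infty$ means $(\frac{1}{\min(r,2)}-\frac1q)_+=0$, i.e.\ $q\le \min(r,2)$. So Corollary~\ref{cor-Lr} gives \eqref{emb-in-L0} whenever $q\le\min(r,2)$.
\item[] For the converse with $p>1$ (so $r=p>1$), the embedding forces the constant sequence into $\ell_{t_2}$. This means $t_2=\infty$, i.e.\ $q\le\max(r,2)$.
\item[] In the borderline $p=1$, $r=1$, the corollary puts $t_2=\infty$, so no restriction on $q$ arises. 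This matches the claim ``no further assumption if $p\le1$''.
\end{itemize}

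\emph{Case $p<1$.} Here $r=1$, $\varrho=p<1$, and $\min(r,2)=1$. The sequence is now $\varphi(2^{-j})^{p-1}$, which does not decay because $\varphi(2^{-j})\le \varphi(1)=1$; in fact it is $\ge 1$. So membership in $\ell_{t_1}$ again requires $t_1=\infty$, i.e.\ $q\le 1=\min(r,2)$. One must also check that for $q\le1$ the condition of Corollary~\ref{cor-Lr} really holds: with $t_1=\infty$ we need $\sup_j\varphi(2^{-j})^{p-1}<\infty$.

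This is the main obstacle. If $\varphi(t)\to0$ as $t\to0$, the supremum is infinite, so the direct application fails unless $\lim_{t\to0}\varphi(t)>0$. Fixing this at the level of Corollary~\ref{cor-Lr} itself would instead require $s>0$, so I would not route the $p<1$ case through it. I would first try to handle it directly at the function level, through the norm definition with $s=0$. The idea is to use $\M\hookrightarrow$ local $L_p$ on $\Omega$ (take $P\supset\Omega$ with $\ell(P)\sim1$) and the $p$-triangle inequality. For $q\le p$ this should give
\[
\|f\mid L_p(\Omega)\|\lesssim \|f\mid \mathcal N^0_{\varphi,p,q}(\Omega)\|.
\]
Then I would try to reach $L_1$ via local means or atomic estimates, in the spirit of the classical embedding $B^0_{p,q}\hookrightarrow L_{\max(1,p)}$ for $q\le\min(1,p)$ in the usual convention. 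If this fails, I would read the statement's sufficiency part as intended for $p\ge1$ and flag that gap.

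For the converse when $p\le1$ nothing is to be proved, since $t_2=\infty$ always allows every $q$ from the constant-sequence viewpoint. In summary, the core is the bookkeeping of $t_1,t_2$ in Corollary~\ref{cor-Lr}, and the subtle point is the sub-case $p<1$, where the factor $\varphi(2^{-j})^{\varrho-1}$ may blow up.
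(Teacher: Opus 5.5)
For $p\ge 1$ your argument is what the paper intends. The paper gives no separate proof: the corollary is Corollary~\ref{cor-Lr} with $s=0$. There $\varrho=1$ makes the sequence constant, so $t_1=\infty\Leftrightarrow q\le\min(r,2)$ and $t_2=\infty\Leftrightarrow q\le\max(r,2)$, with $t_2=\infty$ by convention when $p=r=1$.

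For $0<p<1$ you are right that this specialisation breaks down. However, your fallback cannot succeed, because the sufficiency assertion is false whenever $\lim_{t\to0}\varphi(t)=0$.

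\emph{Counterexample.} Take $\varphi(t)=t^{d/p}\in\mathcal{G}_p$, so $\mathcal{N}^0_{\varphi,p,q}(\Omega)=B^0_{p,q}(\Omega)$. Since $\|\mathcal{F}^{-1}\eta_j\mid L_p(\rd)\|=c\,2^{jd(1-1/p)}$, the Dirac distribution at a point $x_0\in\Omega$ lies in $B^0_{p,q}(\rd)$ for every $q$ when $p<1$. Its restriction to $\Omega$ is not in $L_1(\Omega)$.

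\emph{The paper's own tools already show this.} Your remark that for $p\le 1$ ``nothing is to be proved'' in the converse is correct only for $p=1$. For $p<1$ the sequence $\varphi(2^{-j})^{p-1}$ is not constant. The necessity half of Corollary~\ref{cor-Lr} (with $r=1$, $t_2=\infty$) then says that \eqref{emb-in-L0} forces $\sup_j\varphi(2^{-j})^{p-1}<\infty$, i.e.\ $\lim_{t\to0}\varphi(t)>0$. This is a condition on $\varphi$, not on $q$, and it contradicts the claimed sufficiency for every $\varphi$ with $\lim_{t\to0}\varphi(t)=0$.

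\emph{Why both steps of your direct plan fail.} First, the $p$-triangle inequality only gives convergence of the Littlewood--Paley series in the quasi-Banach space $L_p(\Omega)$ with $p<1$, and this space does not embed into distributions. For the Dirac distribution, the smooth partial sums $2^{Jd}\theta(2^J\cdot)$ converge to $\delta$ in $\mathcal{S}'$ but tend to $0$ in $L_p$, since their quasi-norm is $2^{Jd(1-1/p)}\|\theta\mid L_p\|$. Second, the ``classical'' embedding $B^0_{p,q}\hookrightarrow L_{\max(1,p)}$ that you want to imitate does not exist for $p<1$, because $B^s_{p,q}\subset L_1^{\mathrm{loc}}$ requires $s\ge d(\frac1p-1)$.

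So the right move is a corrected statement, not another proof. Sufficiency holds for $p\ge1$. For $p<1$ and $q\le 1$, the embedding \eqref{emb-in-L0} holds if and only if $\lim_{t\to0}\varphi(t)>0$. In that case $\sup_j\varphi(2^{-j})^{p-1}<\infty$, and your specialisation of Corollary~\ref{cor-Lr} goes through verbatim. This is also the only range the paper's implicit derivation actually covers.
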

\begin{remark}
 In case $\varphi\in {\mathcal G}_{p}$ satisfies the additional condition that there exists some $\varepsilon>0$ so that the function $\varphi(t) t^{-\varepsilon}$ is increasing, then the generalised Triebel-Lizorkin-Morrey spaces  $\mathcal{E}^{s}_{\varphi,p,q}(\rd)$ are defined.  Moreover,  $\mathcal{E}^{0}_{\varphi,p,2}(\rd)=\mathcal{M}_{\varphi,p}(\rd)$ for $1<p<\infty$. By considering the restriction to $\Omega$, we can improve a bit the sufficient condition in  Corollary~\ref{cor-Lr} when $1<p<\infty$ and $p\geq r$. Indeed, in such a case $\varrho=1$ and necessarily $s\geq 0$. Then, if  $0<q\leq \min \left(\max(p,r),2\right)=\min(p,2)$,  we have the chain of embeddings
$$
\mathcal{N}^{s}_{\varphi,p,q}(\Omega)  \hookrightarrow \mathcal{N}^{0}_{\varphi,p,\min(p,2)}(\Omega)   \hookrightarrow \mathcal{E}^{0}_{\varphi,p,2}(\Omega) =\mathcal{M}_{\varphi,p}(\Omega)    \hookrightarrow  L_p(\Omega)  \hookrightarrow L_r(\Omega).
$$ 
This means that  we could  replace $\min(r,2)$ by   $\min \left(\max(p,r),2\right)$ in the definition of $t_1$ regarding the sufficient condition for the embedding \eqref{emb-in-L}.
\end{remark}

\section{Embeddings of further generalised Morrey smoothness spaces on domains}\label{sect-Bphi}

{We now consider spaces of type $\Aphi(\Omega)$ and $\MF(\Omega)$, and start by presenting  new embedding results for spaces of type $\Bphi(\Omega)$,  building on our previous findings for sequence spaces. Relying on Proposition~\ref{P-bp-cont}, we derive the following.} 

  %\begin{corollary}
  	\begin{theorem}\label{P-Bp-cont}
Let $s_i\in\rr$, $0<p_2\le p_1<\infty$, $0<q_i\leq \infty$, and $\varphi_i\in {\mathcal G}_{p_i}$, for $i=1,2$. 
We assume without loss of generality that $\varphi_1(1)=\varphi_2(1)=1$. 
%Let $ \varrho=\min(1,\frac{p_1}{p_2})$. 
% and $\displaystyle\alpha_j= \sup_{0\le\nu\le %j}\frac{\varphi_2(2^{-\nu})}{\varphi_1(2^{-\nu})^\varrho}$, $j\in \mathbb{N}_0$. 

There is a continuous embedding 
\begin{equation} \label{embed1-bpa}  % I changed label {embed1-bp} to {embed1-bpa} as it was repeated to a previous one
\Bphia(\Omega) \hookrightarrow \Bphib(\Omega)  
\end{equation}
if, and only if, \eqref{cond2-bp} and \eqref{cond3-bp} are satisfied.
\end{theorem}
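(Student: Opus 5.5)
The plan is to transfer Proposition~\ref{P-bp-cont} from the sequence spaces $\bp(Q)$ to the function spaces $\Bphi(\Omega)$, in the same way Theorem~\ref{th-cont-BM} was obtained from Theorem~\ref{th-cont}. Three ingredients are needed:
\begin{itemize}
\item the wavelet characterisation of $\Bphi(\rd)$ via $\bp(\rd)$, cf. \cite{HLMS24}, using Daubechies wavelets of sufficiently high regularity (depending on $s_i$, $p_i$ and $\varphi_i$, $i=1,2$) so that it holds simultaneously for both spaces;
\item a common bounded linear extension operator $\mathrm{ext}:\Bphia(\Omega)\to\Bphia(\rd)$, also bounded as a map $\Bphib(\Omega)\to\Bphib(\rd)$, which exists for bounded $C^\infty$ domains;
\item the localisation of wavelet expansions to the cube $Q\supset\Omega_{2c}$.
\end{itemize}

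\textbf{Sufficiency.} Assume \eqref{cond2-bp} and \eqref{cond3-bp}. Let $f\in\Bphia(\Omega)$ and take an extension $g$ with $\|g\mid\Bphia(\rd)\|\le 2\|f\mid\Bphia(\Omega)\|$. Multiply $g$ by a smooth cut-off $\chi$ equal to $1$ on $\Omega$ and supported in $\Omega_c$; pointwise multipliers are bounded on $\Bphi(\rd)$. Expand $\chi g$ in wavelets. Only coefficients $\lambda_{j,m}$ with $\supp\psi_{j,m}$ meeting $\Omega_c$ survive, and for $j\ge 0$ these correspond to $Q_{j,m}\subset Q$, up to a bounded number of shifts; the finitely many indices at small levels near the boundary of $Q$ cause no harm. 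Together with the wavelet isomorphism this gives $\lambda(\chi g)\in\bpa(Q)$. The norm over $\sup_{P}$ in $\bp(\rd)$ reduces to $|P|\le 1$ with $P$ near $Q$, and large cubes are harmless since $\varphi_i\in\Gpx{p_i}$. Proposition~\ref{P-bp-cont} then gives $\lambda\in\bpb(Q)\subset\bpb(\rd)$, hence $\chi g\in\Bphib(\rd)$, and restricting to $\Omega$ yields the estimate.

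\textbf{Necessity.} Assume the embedding \eqref{embed1-bpa} holds. I would test it with functions built from single wavelets or from wavelet packets supported in a fixed ball $B\subset\Omega$: for $j\ge j_0$ and $Q_{j,m}\subset B'$, the functions $\psi_{j,m}$ are supported in $\Omega$. For $\lambda$ supported on such indices, set $f=\sum\lambda_{j,m}2^{-jd/2}\psi_{j,m}$; the factor $2^{-jd/2}$ accounts for the normalisation used in the wavelet characterisation. Then $\|f\mid\Bphi(\Omega)\|\sim\|\lambda\mid\bp\|$. This needs:
\begin{itemize}
\item the estimate $\ge$ on $\Omega$, which holds because any extension $g$ of $f$ has wavelet coefficients agreeing with $\lambda$ on the interior indices, and the coefficient map is bounded;
\item the estimate $\le$, which follows from the zero extension.
\end{itemize}
With this, the test sequences of Step~2 of Proposition~\ref{P-bp-cont} apply:
\begin{itemize}
\item a single coefficient at level $j_0$ gives \eqref{cond2-bp};
\item the sequences $2^{-jt}$ and $2^{-js_1}\mu_j$ give \eqref{cond3-bp}. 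These are placed only on cubes inside a fixed dyadic cube contained in $B$; by translation invariance in the $\sup_P$ norm, the computation is unchanged up to constants.
\end{itemize}

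\textbf{Main obstacle.} The delicate step is the localisation. For sufficiency, one must check that restricting the wavelet coefficients of $\chi g$ to $Q$ is compatible with the $\sup_P$-structure of $\bp$, in particular for cubes $P$ larger than $Q$ and for boundary cubes. For necessity, one must check that functions with interior wavelet support have $\Bphi(\Omega)$-norm equivalent to the sequence norm. I would handle the first point as in the proof of Theorem~\ref{th-cont-BM}, citing \cite{HLMS24} for the wavelet isomorphism. For the second, the $\ge$ direction would follow from the bounded coefficient map applied to an arbitrary extension.
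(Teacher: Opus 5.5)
Your proposal is correct and follows the paper's route. The theorem is obtained by transferring Proposition~\ref{P-bp-cont} to $\Omega$ via the wavelet characterisation of $\Bphi(\rd)$ and localisation to the cube $Q$, just as Theorem~\ref{th-cont-BM} is derived from Theorem~\ref{th-cont}, and your interior-wavelet test functions supply the necessity direction. One small remark: the common linear extension operator you list is never actually used, so you can drop that ingredient for this continuity statement. A near-optimal extension of each $f$ suffices, and the cut-off can even be replaced by discarding the wavelet coefficients whose supports miss $\Omega$, which does not increase the $\bp$-norm.
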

%\end{corollary}

{In the same manner, Proposition~\ref{P-bp-cont2} and  {Corollary}~\ref{P-bp-cont3} yield corresponding corollaries on continuous embeddings.  
Propositions~\ref{Lemma-LS_1+2} and \ref{pinfinity} allow us to derive the following two corollaries regarding compact embeddings.}

Let the numbers $\sigma(s_1)$, $\sigma_{\infty}(s_1)$  and $\overline{\sigma}(s_1)$ be given by \eqref{comp-1a}, \eqref{sigma}  and \eqref{comp-2a}, respectively. %Recall our conditions \eqref{comp-1} and \eqref{comp-2}.

%\begin{corollary}
		\begin{theorem}\label{cor-5.1}
Let $s_i\in\rr$, $0<p_i<\infty$, $0<q_i\leq \infty$, and $\varphi_i\in {\mathcal G}_{p_i}$, for $i=1,2$, and $ \varrho=\min(1,\frac{p_1}{p_2})$. We assume without loss of generality that $\varphi_1(1)=\varphi_2(1)=1$.
\begin{enumerate}[{\bfseries \upshape (i)}]
\item
  Assume that \eqref{comp-1} is satisfied.  
  Then the embedding
\begin{equation} \label{B-B-comp}
	\Bphia(\Omega) \hookrightarrow \Bphib(\Omega) 
\end{equation}
is compact, if $s_2<\sigma(s_1)$.
\item
{Assume that \eqref{comp-2} holds. Then  the embedding \eqref{B-B-comp} is compact, if  $s_2<\overline{\sigma}(s_1)$.}
%
%  \begin{align*}
%  s_2 < \begin{cases} s_1 & \text{if}\quad p_1\ge p_2, \\
%     \overline{\sigma}(\sigma(s_1)) & \text{if}\quad p_1<p_2 . \end{cases} %\qquad\text{and \eqref{comp-2} holds.
%	\end{align*}
%}
 \end{enumerate}
	\end{theorem}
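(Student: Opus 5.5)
The plan is to transfer the compactness statement of Proposition~\ref{Lemma-LS_1+2} from the sequence spaces $b^{s_i,\varphi_i}_{p_i,q_i}(Q)$ to the function spaces $B^{s_i,\varphi_i}_{p_i,q_i}(\Omega)$. We use the same mechanism as in the proof of Theorem~\ref{th-cont-BM}: a wavelet isomorphism combined with a common extension operator.

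\emph{Step 1: reduce to a sequence space embedding.} Fix a sufficiently smooth compactly supported Daubechies wavelet system with smoothness and moment conditions exceeding the values needed for both parameter sets $(s_i,p_i,q_i,\varphi_i)$, $i=1,2$. By the wavelet characterisation of $B^{s,\varphi}_{p,q}(\rd)$ in \cite{HLMS24}, the map $f\mapsto\{\langle f,\psi_{j,m}\rangle\}$ is an isomorphism onto $b^{s,\varphi}_{p,q}(\rd)$, and synthesis is its bounded inverse. Let $\mathrm{ext}$ be a bounded linear extension operator $B^{s_1,\varphi_1}_{p_1,q_1}(\Omega)\to B^{s_1,\varphi_1}_{p_1,q_1}(\rd)$. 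Multiplying by a cut-off function $\chi\in C_0^\infty(\Omega_c)$ with $\chi\equiv1$ on $\Omega$ (a pointwise multiplier on these spaces), we may assume all extensions are supported in $\Omega_c$. Their nonvanishing wavelet coefficients are then indexed, up to a fixed finite number of boundary layers per level, by cubes $Q_{j,m}\subset Q$. Hence $\mathrm{id}_\Omega = \mathrm{re}\circ S\circ \mathrm{id}_b\circ T\circ(\chi\cdot)\circ\mathrm{ext}$. Here $T$ is wavelet analysis into $b^{s_1,\varphi_1}_{p_1,q_1}(Q)$, $\mathrm{id}_b$ is the sequence embedding \eqref{embed1-bp}, $S$ is wavelet synthesis from $b^{s_2,\varphi_2}_{p_2,q_2}(Q)$ into $B^{s_2,\varphi_2}_{p_2,q_2}(\rd)$, and $\mathrm{re}$ is restriction to $\Omega$. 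All factors except $\mathrm{id}_b$ are bounded, so compactness of $\mathrm{id}_b$ yields compactness of \eqref{B-B-comp}.

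\emph{Step 2: apply the sequence result.} Under \eqref{comp-1} and $s_2<\sigma(s_1)$, Proposition~\ref{Lemma-LS_1+2}(i) gives compactness of $\mathrm{id}_b$. Under \eqref{comp-2} and $s_2<\overline{\sigma}(s_1)$, Proposition~\ref{Lemma-LS_1+2}(ii) does the same. This proves (i) and (ii).

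\emph{Main obstacle.} The delicate point is Step 1, specifically the existence of an extension operator bounded on $B^{s_1,\varphi_1}_{p_1,q_1}$. For $\MB$ one has \cite[Theorem~5.4]{IN19}, but for the Besov-type scale this must be checked. I would try first to adapt the Rychkov-type universal extension operator, whose boundedness rests on local maximal estimates that are compatible with the $\sup_P\varphi(\ell(P))|P|^{-1/p}$ structure, since only cubes with $|P|\le1$ matter near $\Omega$.

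Alternatively, the problem can be avoided altogether. The definition by restriction means that any $f$ in the unit ball has some extension $g$ with norm at most $2$, and the map $f\mapsto g$ need not be linear. The argument only needs boundedness of the set $\{T(\chi g)\}$ in $b^{s_1,\varphi_1}_{p_1,q_1}(Q)$, together with linearity and continuity of $\mathrm{re}\circ S$. Precompactness of the image of this set then gives precompactness of the unit ball's image in $B^{s_2,\varphi_2}_{p_2,q_2}(\Omega)$, so no linear extension operator is required.

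A secondary technical point is that the boundary wavelets with $Q_{j,m}\not\subset Q$ are handled by enlarging $Q$ slightly: $\Omega_{2c}\subset Q$ leaves room for the wavelet supports of $\chi g$.
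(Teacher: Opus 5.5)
Your proposal is correct and follows the paper's route. The paper gives no separate argument for this theorem: it transfers Proposition~\ref{Lemma-LS_1+2} to $\Omega$ via the wavelet characterisation of $B^{s,\varphi}_{p,q}(\rd)$ from \cite{HLMS24}, in the same way as Theorem~\ref{th-cont-BM} is obtained from the sequence space results. Your remark that a nonlinear near-optimal extension $f\mapsto g$ with $\|g\|\le 2\|f\|$ already suffices for compactness is a worthwhile sharpening, since the paper never exhibits a linear extension operator on the $B^{s,\varphi}_{p,q}$ scale. Likewise, you can avoid the cut-off $\chi$, whose multiplier property you assume without proof, by setting to zero the wavelet coefficients of $g$ whose wavelet supports miss $\Omega$; this does not increase the sequence quasi-norm and leaves $g|_{\Omega}$ unchanged.
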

%\end{corollary}

\remark{We refer to our Remark~\ref{rem-class-1} for the comparison with the classical case recalled in Corollary~\ref{comp-class}.
Recall that this means, in particular, that the assumptions that guarantee the compactness of embedding \eqref{B-B-comp} cannot be improved in general. 

Please note that the breaking point in the assumptions of {Theorem}~\ref{B-B-comp}, i.e., the case $\varphi_2(t)\sim \varphi_1(t)^\varrho$, $0<t\le 1$, corresponds to the concept of so-called {\em clans} of function spaces introduced and studied (for the Morrey smoothness spaces, including $\MAu$ and $\At$) by Triebel and Haroske  in \cite{ht2021}.    
}

{
% \begin{corollary}
 		\begin{theorem}\label{Pinfinity}
Let  $0<p<\infty$ and $\varphi\in {\mathcal G}_{p}$. We assume without loss of generality that $\varphi(1)=1$.  Let $s_1\in\rr$, $0<q \leq \infty$,  and  assume  that \eqref{liminf} is satisfied.

	There is a continuous embedding   
	\begin{equation}\label{pinfty3}
		B^{s_1,\varphi}_{p,q}(\Omega)\hookrightarrow B^{s_2}_{\infty,\infty}(\Omega)
	\end{equation}
	 if, and only if,   $s_2\le \sigma_\infty(s_1)$.  
	
	Moreover, if $s_2 < \sigma_\infty(s_1)$, then the embedding \eqref{pinfty3} is  compact.
if, and only if, {\eqref{comp-nec} is satisfied.}
%	\eqref{cond2-bp} and \eqref{cond3-bp} are satisfied.
	\end{theorem}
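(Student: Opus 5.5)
The plan is to transfer Proposition~\ref{pinfinity} from the sequence space level to the function spaces on $\Omega$. We use the same scheme as in the proof of Theorem~\ref{th-cont-BM}: the wavelet characterisation of $\Bphi(\rd)$ from \cite{HLMS24}, the classical wavelet characterisation of $B^{s_2}_{\infty,\infty}(\rd)$ by $b^{s_2}_{\infty,\infty}(\rd)$, and restriction and extension operators.

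\emph{Sufficiency.} Fix a Daubechies wavelet system with smoothness and number of vanishing moments $L$ large enough for both spaces. Choose $c>0$ and the dyadic cube $Q\supset\Omega_{2c}$ as in Section~\ref{sect-seq}. Let $f\in B^{s_1,\varphi}_{p,q}(\Omega)$, and take an extension $g\in B^{s_1,\varphi}_{p,q}(\rd)$ with $\|g\|\le 2\|f\|$. Multiply $g$ by a smooth cut-off $\chi$, equal to $1$ on $\Omega$ and supported in $\Omega_c$. Pointwise multipliers are bounded on $\Bphi(\rd)$, so $\chi g$ also lies in that space. Its wavelet coefficients that influence $\Omega$ are indexed by cubes $Q_{j,m}\subset Q$, with finitely many exceptional coarse terms which can be absorbed.

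Define the coefficient map $\lambda(\chi g)$, restricted to $Q$. It lies in $b^{s_1,\varphi}_{p,q}(Q)$ with norm at most $C\|f\|$. Proposition~\ref{pinfinity}(i) maps it boundedly into $b^{s_2}_{\infty,\infty}(Q)$. The wavelet synthesis followed by restriction to $\Omega$ then gives a bounded map into $B^{s_2}_{\infty,\infty}(\Omega)$. Its value equals $f$, because the discarded coefficients do not meet $\Omega$, or alternatively because $\chi g=g$ on $\Omega$. Hence the embedding is the composition $R\circ S\circ \mathrm{id}_{\mathrm{seq}}\circ \Lambda\circ E$ of bounded operators.

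For compactness, the middle factor is compact under \eqref{comp-nec} by Proposition~\ref{pinfinity}(ii). A composition of bounded operators with one compact factor is compact, which gives the sufficiency of \eqref{comp-nec}.

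\emph{Necessity.} Assume the embedding \eqref{pinfty3} holds, but $s_2>\sigma_\infty(s_1)$, or the compactness condition fails. We reuse the extremal sequences from the proof of Proposition~\ref{pinfinity}. Those sequences are supported on single cubes $Q_{j,0}$, with $0\in\Omega$ after translation. Shifting them to cubes $Q_{j,m}$ lying well inside $\Omega$ (at positive distance from $\partial\Omega$), we set $f=\sum\lambda_{j,m}\psi_{j,m}$. Then $f$ is supported in $\Omega$, and $\|f|B^{s_1,\varphi}_{p,q}(\Omega)\|\le \|f|B^{s_1,\varphi}_{p,q}(\rd)\|\sim\|\lambda|b^{s_1,\varphi}_{p,q}\|$.

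In the other direction we need $\|f|B^{s_2}_{\infty,\infty}(\Omega)\|\gtrsim\|\lambda|b^{s_2}_{\infty,\infty}\|$. Since $f$ vanishes near $\partial\Omega$, any extension $g$ of $f$ can be replaced by $\chi' g$, where $\chi'$ is a cut-off equal to $1$ on $\operatorname{supp} f$ and supported in $\Omega$; this leaves the function on $\Omega$ unchanged. The wavelet coefficients at the cubes $Q_{j,m}$ can then be recovered by pairing with the dual wavelets, whose supports lie inside $\Omega$. This gives the lower bound, and the blow-up, respectively the lack of a Cauchy subsequence, carries over from the sequence level.

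The main obstacle is this localisation: the lower estimate for restrictions of wavelet sums supported inside $\Omega$, and the verification that the extremal sequences can be placed in the interior. This works for the single-cube sequences used here. To carry it out, I would adapt the argument of \cite{hs13} (Substep~1.1 of Theorem~3.1 there), as quoted in the proof of Theorem~\ref{th-cont-BM}, using the wavelet characterisation from \cite{HLMS24} in place of the one for $\MB$.
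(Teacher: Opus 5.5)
Your proposal is correct and follows the route the paper intends for Theorem~\ref{Pinfinity}: transfer Proposition~\ref{pinfinity} to $\Omega$ via the wavelet characterisations of $B^{s_1,\varphi}_{p,q}(\rd)$ and $B^{s_2}_{\infty,\infty}(\rd)$ and restriction, as in the proof of Theorem~\ref{th-cont-BM}, with necessity obtained from the nested single-cube extremal sequences placed inside $\Omega$. Using a near-optimal extension and discarding coefficients instead of a linear extension operator is harmless, since compactness only needs bounded sets to go to bounded sets; the cut-off $\chi$ is not even needed, because the sequence quasi-norm is monotone.
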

%\end{corollary}
}

\remark{\label{rembmo}
It follows directly from the definitions that if $\lim_{t\rightarrow 0} \varphi(t)=c>0$, $\varphi\in \mathcal{G}_p$, then 
	\[ B^{s,\varphi}_{p,q}(\Omega)\, = \,B^{s,1/p}_{p,q}(\Omega)\qquad \text{(in the sense of equivalence of norms)}\]
for any possible parameters $s,p,q$. On the other hand, it is well known  that $B^{0,1/2}_{2,2}(\Omega)= \rm{bmo}(\Omega)$, cf.~\cite{s011}. In consequence 
\[ B^{0,\varphi}_{2,2}(\Omega)\, = \,\rm{bmo}(\Omega)\qquad \text{(in the sense of equivalence of norms)}\]
if $\lim_{t\rightarrow 0} \varphi(t)=c>0$. 
}
      
\begin{corollary}
	Let  $0<p<\infty$, $0<q\le \infty$, $\varphi\in \mathcal{G}_p$, and $\lim_{t\rightarrow 0} \varphi(t)=0$. Then 
	\begin{enumerate}[{\bfseries\upshape (i)}]
\item the embedding \[ B^{s,\varphi}_{p,q}(\Omega)\hookrightarrow \rm{bmo}(\Omega)\] is compact if $s>\frac{d}{p}$ and $p<2$, 
\item the embedding \[ \mathrm{bmo}(\Omega)\hookrightarrow B^{s,\varphi}_{p,q}(\Omega)\] is compact if $s<0$.
	\end{enumerate} 
\end{corollary}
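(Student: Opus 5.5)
Both statements reduce, through Remark~\ref{rembmo}, to embeddings between spaces $B^{s,\varphi}_{p,q}(\Omega)$ that are already covered by the results on domains. We use $\mathrm{bmo}(\Omega)=B^{0,\psi}_{2,2}(\Omega)$ with $\psi\equiv 1$ near zero; more precisely, we take $\psi\in\mathcal{G}_2$ with $\psi(1)=1$ and $\lim_{t\to0}\psi(t)=1$, for instance $\psi=\varphi_0\equiv 1$. Throughout, $\varphi_1,\varphi_2$ are normalised by $\varphi_i(1)=1$.

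For (i), the source is $B^{s,\varphi}_{p,q}(\Omega)$ with $p<2$, $\varphi\in\mathcal{G}_p$, and the target is $B^{0,\psi}_{2,2}(\Omega)$. Then $\varrho=p/2<1$. We check \eqref{comp-1}: since $\lim_{t\to 0}\varphi(t)=0$ and $\varphi$ is non-decreasing with $\varphi(t)\le1$ on $(0,1]$, the condition would require $1\le c\,\varphi(t)^{\varrho}$, which fails. So we are in case \eqref{comp-2}, because $\psi=1\ge \varphi^\varrho$ on $(0,1)$. Theorem~\ref{cor-5.1}(ii) then gives compactness whenever $0<\overline{\sigma}(s)$. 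By Remark~\ref{Rem-sigma}(ii), $\overline{\sigma}(s)\ge s-\frac{d}{p}$, so $s>\frac dp$ yields $0<\overline{\sigma}(s)$ and hence compactness. If one prefers to avoid the lower bound, one can compute directly: $\sigma(s)\ge s-\frac dp(1-\varrho)$, and
\[
2^{j(t-\sigma(s))}\varphi(2^{-j})^{-\varrho}\le 2^{j(t-\sigma(s)+\frac{d}{2})},
\]
since $\varphi^\varrho\in\mathcal G_2$. This gives $\overline{\sigma}(s)\ge \sigma(s)-\frac d2\ge s-\frac dp$.

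For (ii), the source is $\mathrm{bmo}(\Omega)=B^{0,\psi}_{2,2}(\Omega)$ with $p_1=2$, and the target is $B^{s,\varphi}_{p,q}(\Omega)$. We split into two cases.
\begin{enumerate}[(a)]
\item If $p\le 2$, then $\varrho=1$ and \eqref{comp-1} reads $\varphi\le c\cdot 1$, which holds. Theorem~\ref{cor-5.1}(i) gives compactness for $s<\sigma(0)=0$, since $\sigma(s_1)=s_1$ when $\varrho=1$ by Remark~\ref{Rem-sigma}(i).
\item If $p>2$, then $\varrho=2/p<1$, and \eqref{comp-1} again holds because $\varphi\le 1=\psi^\varrho$ on $(0,1)$. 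Since $\lim_{j}\psi(2^{-j})=1>0$, Remark~\ref{Rem-sigma}(i) gives $\sigma(0)=0$, so Theorem~\ref{cor-5.1}(i) yields compactness for $s<0$.
\end{enumerate}

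The main points to watch are technical rather than conceptual. First, the identification in Remark~\ref{rembmo} must be an equality of spaces on $\Omega$ defined by restriction, with $\psi\in\mathcal G_2$, so that the domain theorems apply to the bmo side. Second, the hypothesis $\lim_{t\to0}\varphi(t)=0$ is genuinely needed in (i): it is what rules out \eqref{comp-1} and forces the use of part~(ii) of Theorem~\ref{cor-5.1}. In (ii) it plays no role beyond $\varphi\le1$ on $(0,1]$.
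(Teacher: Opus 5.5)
Your proof is correct and follows the paper's argument. Part (i) comes from Theorem~\ref{cor-5.1}(ii) with $\mathrm{bmo}(\Omega)=B^{0,\psi}_{2,2}(\Omega)$, $\psi\equiv 1$, and $\overline{\sigma}(s)\ge s-\frac{d}{p}>0$; the paper does the same estimate with an explicit $\varepsilon$. Part (ii) comes from Theorem~\ref{cor-5.1}(i) with $\sigma(0)=0$, and your split into $p\le 2$ versus $p>2$ is harmless. One correction to your closing remark: neither \eqref{comp-2} nor the lower bound for $\overline{\sigma}(s)$ uses $\lim_{t\to 0}\varphi(t)=0$, so your argument for (i) works for every normalised $\varphi\in\mathcal{G}_p$, and that hypothesis is not actually needed for the conclusion.
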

\begin{proof} Let $p<2$. 
	We take $\varepsilon>0$ such that    $s-\varepsilon>\frac{d}{p}$. Then$(s-\varepsilon- \frac{d}{2})(1-\varrho)^{{-1} }> \frac{d}{p}$, where $\varrho=\frac{p}{2}$. But $\varphi\in \mathcal{G}_p$, so 
	\[ {\sup_{j\in\no} }  \ 2^{j(\frac{d}{2}-s+\varepsilon)}\varphi(2^{-j})^{\varrho-1}\le C<\infty. \] 
	 This means that $\sigma(s)\ge \frac{d}{2}+\varepsilon$. Moreover $\varphi^\varrho\in \mathcal{G}_2$, so 
	 \[{\sup_{j\in\no}} \ 2^{j(\varepsilon-\sigma(s))}\varphi(2^{-j})^{-\varrho}\le C<\infty. \] 
We take $\varphi_2\in \mathcal{G}_2$ such that  $\lim_{t\rightarrow 0} \varphi_2(t)=c>0$. Then {$\overline{\sigma}(s)\geq \varepsilon>0$} and in consequence Corollary~\ref{cor-5.1}(ii) and  Remark~\ref{rembmo} imply the compactness of the embedding 
\[  B^{s,\varphi}_{p,q}(\Omega)\hookrightarrow B^{0,\varphi_2}_{2,2}(\Omega) = \rm{bmo}(\Omega). \]

If $p\ge 2$,   then $\varrho=1$ and 
\[{\sup_{j\in\no}}\   2^{j(\varepsilon-s)}\varphi(2^{-j})^{-1}\le C<\infty\]
if  $s-\varepsilon>\frac{d}{p}$. This means that { $\sigma(s)=s$, $\overline{\sigma}(s)\geq \varepsilon>0$ and the assertion follows once more by}  {Theorem}~\ref{cor-5.1}(ii).
 
Similarly the second point follows from {Theorem}~\ref{cor-5.1}(i). If we take  $\varphi_1\in \mathcal{G}_2$ such that  $\lim_{t\rightarrow 0} \varphi_1(t)=c>0$, then ${\sigma({0})}=-s$ for any $s<0$. This completes  the proof of the corollary. 	    
\end{proof}

\ignore{now extension to $\Fphia(\Omega)\hookrightarrow \Fphib(\Omega)$ and $\MFa(\Omega)\hookrightarrow \MFb(\Omega)$ by elementary embeddings and coincidences, only for the compactness case, I suggest; then comparison with Corollary~\ref{comp-class}}

 {The results we obtained on the continuity and compactness of embeddings within the generalised Besov-Morrey scale $\MB(\Omega)$ and within the scale of spaces  $\Bphi(\Omega)$, together with the existence of embeddings linking these Besov-type spaces to their  Triebel-Lizorkin-type counterparts, enable us to derive analogous results for the latter class of spaces. Recall that, due to \eqref{E=F},
\begin{equation} \label{E=FO}
\Fphi(\Omega)=\MF(\Omega).
\end{equation}
 For compactness we obtain the following consequence of Theorem~\ref{th-cont-BM}.

\begin{corollary}\label{comp-GTLM}
Let $s_i\in\rr$, $0<p_i<\infty$, $0<q_i\leq \infty$, and $\varphi_i\in {\mathcal G}_{p_i}$, for $i=1,2$,    assuming  that $\varphi_i$ satisfies \eqref{intc} when $q_i<\infty$.
We assume without loss of generality that $\varphi_1(1)=\varphi_2(1)=1$. 
Let $ \varrho=\min(1,\frac{p_1}{p_2})$ and $\displaystyle\alpha_j= \sup_{0\le\nu\le j}\frac{\varphi_2(2^{-\nu})}{\varphi_1(2^{-\nu})^\varrho}$, $j\in \mathbb{N}_0$. Then the embedding
\begin{equation} \label{EinE}
	{\mathcal E}^{s_1}_{\varphi_1,p_1,q_1} (\Omega) \hookrightarrow {\mathcal E}^{s_2}_{\varphi_2,p_2,q_2}(\Omega)
\end{equation}
 is compact if 
\begin{align}\label{cond-TLM}
	\left\{   2^{j(s_2-s_1)} \alpha_j \varphi_1(2^{-j})^{\varrho -1}\right\}_{j\in \no} & \in \ell_{\min(p_2,q_2)}.   
\end{align}
Likewise $\Fphia(\Omega)\hookrightarrow \Fphib(\Omega)$ is compact, if \eqref{cond-TLM} is satisfied.
 \end{corollary}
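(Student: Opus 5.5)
We factorise \eqref{EinE} through generalised Besov--Morrey spaces on $\Omega$ and then apply Theorem~\ref{th-cont-BM} to the middle embedding. On $\rd$, \eqref{Nphi-Ephi'} gives $\mathcal{N}^{s}_{\varphi,p,\min(p,q)}(\rd)\hookrightarrow \MF(\rd)$. On the other side, \eqref{een''} (with $q_0=\infty$, which is admissible in every case) gives $\MF(\rd)\hookrightarrow \mathcal{N}^{s}_{\varphi,p,\infty}(\rd)$. Both embeddings are continuous and pass directly to restrictions on $\Omega$, since all spaces are defined by restriction. This yields the chain
\[
{\mathcal E}^{s_1}_{\varphi_1,p_1,q_1}(\Omega)\hookrightarrow {\mathcal N}^{s_1}_{\varphi_1,p_1,\infty}(\Omega)\hookrightarrow {\mathcal N}^{s_2}_{\varphi_2,p_2,\min(p_2,q_2)}(\Omega)\hookrightarrow {\mathcal E}^{s_2}_{\varphi_2,p_2,q_2}(\Omega),
\]
where the first and last embeddings are continuous. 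It therefore suffices to show that the middle embedding is compact.

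For the middle embedding we apply Theorem~\ref{th-cont-BM} with source fine index $q_1'=\infty$ and target fine index $q_2'=\min(p_2,q_2)$. Then $\frac{1}{q^*}=\bigl(\frac{1}{\min(p_2,q_2)}-\frac{1}{\infty}\bigr)_+=\frac{1}{\min(p_2,q_2)}$, so $q^*=\min(p_2,q_2)<\infty$. Since $q_1'>q_2'$, the compactness criterion of Theorem~\ref{th-cont-BM} coincides with the continuity criterion \eqref{cond2O}, namely $\{2^{j(s_2-s_1)}\alpha_j\varphi_1(2^{-j})^{\varrho-1}\}_j\in\ell_{\min(p_2,q_2)}$. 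This is exactly \eqref{cond-TLM}. So the middle embedding is compact, and composing it with the two continuous embeddings shows that \eqref{EinE} is compact.

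For the $F$-type statement we use \eqref{E=FO}, i.e.\ $\Fphi(\Omega)=\MF(\Omega)$, which holds under assumption \eqref{intc}. Hence $\Fphia(\Omega)\hookrightarrow\Fphib(\Omega)$ is literally the same embedding as \eqref{EinE}.

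The one point that needs care is the value of $q_i$. When $q_i=\infty$, assumption \eqref{intc} is not required, but \eqref{Nphi-Ephi'} and \eqref{een''} are still available in the $q=\infty$ setting (then $\MA=\MF$ formally, and the chain is used with the stated indices), so the argument is unchanged. The factorisation never needs \eqref{een-1'}, because we always take the target index $q_0=\infty$, which is valid in every case.
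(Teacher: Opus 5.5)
Your proof is correct and takes the route the paper intends. The paper states the corollary as a direct consequence of Theorem~\ref{th-cont-BM}, which amounts to your factorisation $\mathcal{E}^{s_1}_{\varphi_1,p_1,q_1}(\Omega)\hookrightarrow\mathcal{N}^{s_1}_{\varphi_1,p_1,\infty}(\Omega)\hookrightarrow\mathcal{N}^{s_2}_{\varphi_2,p_2,\min(p_2,q_2)}(\Omega)\hookrightarrow\mathcal{E}^{s_2}_{\varphi_2,p_2,q_2}(\Omega)$: the middle map is compact because $q^*=\min(p_2,q_2)<\infty$, and the $F$-case follows from \eqref{E=FO}.
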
	
 
 \remark{\label{Rmk5.8}
 Since the conditions of Theorem~\ref{cor-5.1} do not depend on $q$'s, and in view of the embeddings \eqref{elem-gen}, that theorem can be reformulated in exactly the same way for the spaces $\Fphi(\Omega)$. However, 
 the conditions established there for compactness are weaker than the ones obtained above in Corollary~\ref{comp-GTLM}. Indeed, if  \eqref{comp-1} is satisfied, then $\alpha_j \sim 1$, so that  $s_2<\sigma(s_1)$ implies \eqref{cond-TLM}. 
 On the other hand, if \eqref{comp-2}  holds, by choosing $\varepsilon>0$ and $s_3$ so that $s_2<s_3-\varepsilon <\overline{\sigma}(s_1)$, we obtain
% $$ 2^{j(s_2-s_1)} \alpha_j \varphi_1(2^{-j})^{\varrho -1} \le c 2^{j(s_2-s_1)} 2^{j(\sigma(s_1)-s_3)}  \varphi_1(2^{-j})^{\varrho-1}  \le  2^{j(s_2-s_3+\varepsilon)},  $$
 $\alpha_j\leq c\ 2^{j(\sigma(s_1)+\varepsilon-s_3)}$. Thus
   $$ 2^{j(s_2-s_1)} \alpha_j \varphi_1(2^{-j})^{\varrho -1} \le c 2^{j(s_2+ \sigma(s_1)-s_3+\varepsilon -s_1)}  \varphi_1(2^{-j})^{\varrho-1}  \le c',
   $$
since $s_2+ \sigma(s_1)-s_3+\varepsilon<\sigma(s_1)$ and \eqref{comp-1a}.
This once more implies  \eqref{cond-TLM}. \\
Note that the condition 
$$
\left\{   2^{j(s_2-s_1)} \alpha_j \varphi_1(2^{-j})^{\varrho -1}\right\}_{j\in \no}  \in \ell_{\infty}   
$$
is necessary for the embedding \eqref{EinE}. This follows from the chain of embeddings
$$
   \mathcal{N}^{s_1}_{\varphi_1, p_1,\min(p_1,q_1)}(\Omega) \hra \mathcal{E}^{s_1}_{\varphi_1,p_1,q_1}(\Omega)\hra \mathcal{E}^{s_2}_{\varphi_2,p_2,q_2}(\Omega) \hra \mathcal{N}^{s_2}_{\varphi_2, p_2,\infty}(\Omega).
$$
}

\remark{Analogously to the observation made in  Remark~\ref{n-n-class}, if $\varphi_i(t)\sim t^{\nd/u_i}$, $0<p_i\leq u_i<\infty$, and $\mathcal{A}^{s_i}_{\varphi_i,p_i,q_i}=\mathcal{E}^{s_i}_{\varphi_i,p_i,q_i}$, $i=1,2$, 
then Corollary~\ref{comp-GTLM}  reduces to Corollary~\ref{comp-class}(i), since $\alpha_j\sim 2^{j\nd (\frac{\varrho}{u_1}-\frac{1}{u_2})_+}$, $j\in\no$.  
Moreover,  using \eqref{E=FO} and setting $\frac{1}{u_i}=\frac{1}{p_i}-\tau_i$, $i=1,2$,  one recovers Corollary~\ref{comp-class}(ii).
}}

\bigskip

Finally we deal with generalised Morrey spaces $\M(\Omega)$. Here we use ideas from \cite{HSS-morrey} together with \eqref{E=Mf}. First we recall the situation on $\rd$.    In \cite[Section~12.1.2, Cor.~30, p.32]{FHS-MS-2} it is shown that for $0<p_i<\infty$, $\varphi_i\in\Gpx{p_i}$, $i=1,2$, with  $\lim_{t\to 0} \varphi_1(t)=0$, 
   \begin{equation} \label{emb-Mphi}
     {\mathcal M}_{\varphi_1, p_1}(\rd) \hookrightarrow {\mathcal M}_{\varphi_2, p_2}(\rd) \end{equation}
   {if, and only if,}
   \begin{equation}\label{emb-Mphi-a}
p_2\leq p_1 \quad\text{and 
there exists some $C>0$ such that for all $t>0$, $\varphi_1(t)\geq C \varphi_2(t)$}.
\end{equation}     
There is a forerunner in  \cite[Theorem~3.3]{GHLM17} for parameters $1\leq p_2\leq p_1<\infty$, without the additional assumption on $\varphi_1$.

\ignore{      there exists some $C>0$ such that for all $t>0$, $\varphi_1(t)\geq C \varphi_2(t)$. 
   The argument can be immediately extended to $0<p_2\leq p_1<\infty$, cf. \cite[Corollary~30]{FHS-MS-2}.
   
 \open{ \magenta{According to  \cite[Cor.~30]{FHS-MS-2} : for $0< p_i<\infty$, $\varphi_i\in \mathcal{G}_{p_i}$, $i=1,2$, assuming  $\lim_{t\to 0} \varphi_1(t)=0$; then $ {\mathcal M}_{\varphi_1, p_1}(\rd) \hookrightarrow {\mathcal M}_{\varphi_2, p_2}(\rd)$ if, and only if,  $p_2\leq p_1$ and 
there exists some $C>0$ such that for all $t>0$, $\varphi_1(t)\geq C \varphi_2(t)$.\\
Shall we change? If so, we shall also adapt Corollary~\ref{M-M}(i)}
 }   }
   
   \begin{remark}
     In case of $\varphi_i(t) =t^{\nd/u_i}$, $0<p_i\leq u_i<\infty$, $i=1,2$, it is well-known that
\begin{equation}\label{emb-Mu}
\Me(\rd) \hookrightarrow \Mz(\rd)\qquad\text{if, and only if,}\qquad
p_2\leq p_1\leq u_1=u_2, 
\end{equation}
cf. \cite{piccinini-1,rosenthal}. This can be observed from \eqref{emb-Mphi}, \eqref{emb-Mphi-a}  { since} 
\[
\varphi_1(t)\geq C \varphi_2(t) \quad\text{{means}}\quad t^{\frac{\nd}{u_1}} \geq C t^{\frac{\nd}{u_2}},\quad t>0,
\]
which results in $u_1=u_2$.
\end{remark}

Let us assume that  $\Omega\subset\rd$ is a bounded {$C^{\infty}$} domain, that is, in particular an $A$-type domain as dealt with in \cite{HSS-morrey}. Let $\M(\Omega)$ be defined by restriction, that is,
for $0<p<\infty$, $\varphi\in\Gp$,
\[
\M(\Omega) = \{f\in L_p^{\mathrm{loc}}(\Omega): \exists\ g\in \M(\rd): g\big|_{\Omega} = f\}
\]
quasi-normed by
\[
\|f| \M(\Omega)\| = \inf\{ \|g|\M(\rd)\|: g\in \M(\rd): g\big|_{\Omega} = f\}.
\] 
Then one can prove parallel to \cite[Theorem~2.3]{HSS-morrey} the following result.

\begin{lemma}\label{char-intr}
  Let $0<p<\infty$, $\varphi\in\Gp$, $\Omega\subset\rd$ a bounded {$C^{\infty}$}  domain. Then for all $f\in\M(\Omega)$,
  \begin{equation}\label{intr-norm}
  \|f|\M(\Omega)\| \sim \sup_{x\in\Omega, j\in\no} \frac{\varphi(2^{-j})}{|B(x,2^{-j})|^{1/p}} \left(\int_{\Omega\cap B(x, 2^{-j})} |f(y)|^p \dint y\right)^{1/p}.
  \end{equation}
\end{lemma}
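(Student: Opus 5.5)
We prove the two inequalities in \eqref{intr-norm} separately. Denote by $N(f)$ the right-hand side. It suffices to work with balls of radius $2^{-j}$, $j\in\no$: cubes and balls of comparable size are interchangeable by the doubling property of $\varphi\in\Gp$, and the restriction $r\le 1$ is harmless on a bounded domain, as we show below.

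\emph{Upper estimate $N(f)\ls\|f|\M(\Omega)\|$.} Let $g\in\M(\rd)$ with $g|_\Omega=f$. For $x\in\Omega$ and $j\in\no$ we have
\[
\int_{\Omega\cap B(x,2^{-j})}|f|^p\le\int_{B(x,2^{-j})}|g|^p .
\]
Covering $B(x,2^{-j})$ by a fixed number of dyadic cubes of side length comparable to $2^{-j}$, and using $\varphi(2r)\le 2^{d/p}\varphi(r)$ together with monotonicity, we get that the corresponding term in $N(f)$ is bounded by $c\,\|g|\M(\rd)\|$. Taking the infimum over all extensions $g$ gives the claim.

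\emph{Lower estimate $\|f|\M(\Omega)\|\ls N(f)$.} Here we need an extension operator. Since $\Omega$ is bounded and $C^\infty$, it is in particular an $A$-type domain (a Lipschitz or $(\varepsilon,\delta)$ type domain). We therefore follow \cite[Theorem~2.3]{HSS-morrey}: using a Whitney decomposition of $\rd\setminus\overline\Omega$, we define $Ef$ on each Whitney cube $W$ near $\Omega$ as $f$ transported from a reflected cube $W^*\subset\Omega$ of comparable size and distance. For $p<1$ one cannot use averages of $f$, but reflection by a bi-Lipschitz map of a collar neighbourhood, composed with a cut-off, works for all $0<p<\infty$. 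We then set $Ef=0$ far from $\Omega$.

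The key step is to estimate $\varphi(r)|B(y,r)|^{-1/p}\|Ef|L_p(B(y,r))\|$ for arbitrary $y\in\rd$ and $r>0$. We distinguish three cases.
\begin{enumerate}[(a)]
\item If $r\le 1$ and $B(y,r)$ meets $\Omega$ or the collar, the reflection maps $B(y,r)\cap\Omega^c$ into $\Omega\cap B(x,Cr)$ for some $x\in\Omega$, with bounded Jacobian. Hence the quantity is bounded by a constant times the integral over $\Omega\cap B(x,Cr)$, and doubling of $\varphi$ yields $\ls N(f)$.
\item If $r>1$, the $L_p$-integral is bounded by $\|Ef|L_p\|$, which is controlled by finitely many balls of radius $1$ centred in $\Omega$. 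We then use $\varphi(r)r^{-d/p}\le\varphi(1)=1$ from \eqref{Gp-def}.
\item If $B(y,r)$ does not meet the support of $Ef$, the quantity vanishes.
\end{enumerate}

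The main obstacle is case (a) near the boundary, where $B(y,r)$ lies mostly outside $\Omega$. We must make sure that the pull-back region stays inside a single ball of radius $\sim r$ centred in $\Omega$, uniformly in $r$. For a $C^\infty$ (indeed Lipschitz) boundary this follows by working in finitely many local charts where $\Omega$ is a supergraph and reflecting in the normal direction. The bounded overlap and the bounded distortion of the reflection give the needed uniform constants. Combining the two estimates gives \eqref{intr-norm}.
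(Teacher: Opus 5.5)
Your proof is correct, but for the lower estimate you take a much heavier route than necessary. The paper only points to the parallel argument of \cite[Theorem~2.3]{HSS-morrey}. Since Morrey spaces carry no smoothness, that argument needs no genuine extension operator: one simply extends $f$ by zero.

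Set $g=f$ on $\Omega$ and $g=0$ on $\rd\setminus\Omega$. If $B(y,r)\cap\Omega=\emptyset$, the local quantity vanishes. Otherwise pick any $x\in B(y,r)\cap\Omega$. Then $B(y,r)\subset B(x,2r)$, and by monotonicity of $\varphi$
\[
\varphi(r)\,|B(y,r)|^{-1/p}\,\|g|L_p(B(y,r))\| \le 2^{d/p}\,\varphi(2r)\,|B(x,2r)|^{-1/p}\,\|f|L_p(\Omega\cap B(x,2r))\| .
\]
Radii $2r\le 1$ are then moved to the dyadic radii $2^{-j}$ by monotonicity of $\varphi$. Radii $2r>1$ are treated exactly as in your case (b).

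Consequences of this route:
\begin{itemize}
\item The boundary difficulty you single out as the main obstacle never arises.
\item No Whitney cubes, collar reflections or Jacobian bounds are needed.
\item The lemma holds for every bounded domain. The $C^\infty$ hypothesis (or the $A$-type, i.e.\ measure-density, property) is only inherited from the general setting of the paper.
\end{itemize}

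Your reflection argument is nevertheless sound. For a single bi-Lipschitz reflection $R$ of a collar with $|R(z)-z|\ls \mathrm{dist}(z,\partial\Omega)$, the image of $B(y,r)\setminus\overline{\Omega}$ does lie in one ball of radius comparable to $r$ centred in $\Omega$. In the Whitney-cube variant you describe first, you would need a bounded number of such balls instead of one, which is harmless.

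What your route buys is an extension that would also serve spaces with smoothness. For $\M(\Omega)$ it only adds assumptions and work. Your upper estimate is the standard one.
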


\begin{remark}\label{rem-hoelder-MO}
Note that we thus always have $L_\infty(\Omega)\hra \M(\Omega) \hra L_p(\Omega)$. Moreover, $\lim_{t\to 0} \varphi(t)>0$ implies $\M(\Omega) = L_\infty(\Omega)$ due to Lebesgue's differentiation theorem. Hence, if $\lim_{t\to 0} \varphi_1(t)>0$, then
  \[
  \Ma(\Omega) = L_\infty(\Omega)\hookrightarrow \Mb(\Omega)
  \]
for arbitrary $0<p_i<\infty$, $i=1,2$.
\end{remark}

\begin{corollary}\label{M-M}
  Let $\Omega\subset\rd$ be a bounded {$C^{\infty}$}  domain. 
Let $0<p_i<\infty$ and $\varphi_i\in {\mathcal G}_{p_i}$, for $i=1,2$. 
We assume without loss of generality that $\varphi_1(1)=\varphi_2(1)=1$. Let $\lim_{t\to 0} \varphi_1(t)=0$.
\begin{enumerate}[{\bfseries\upshape(i)}]
\item
  Assume that $p_2\leq p_1$ and there exists some ${C>0}$ such that $\varphi_1(t)\geq C \varphi_2(t)$, $0<t\leq 1$. Then
  \begin{equation}\label{MM-Om}
  \Ma(\Omega) \hookrightarrow \Mb(\Omega).
  \end{equation}
\item
  If $1<p_i<\infty$, $i=1,2$, and $\Ma(\Omega) \hookrightarrow \Mb(\Omega)$, then $p_2\leq p_1$ and $\varphi_1(t)\geq C \varphi_2(t)$, $0<t\leq 1$.
\item
  The embedding  $\Ma(\Omega) \hookrightarrow \Mb(\Omega)$ is never compact.
  \end{enumerate}
  \end{corollary}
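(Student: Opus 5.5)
For (i) I will work from the intrinsic quasi-norm of Lemma~\ref{char-intr}. For $x\in\Omega$ and $j\in\no$, H\"older's inequality with $p_2\le p_1$ gives
\[
\Bigl(|B(x,2^{-j})|^{-1}\int_{\Omega\cap B(x,2^{-j})}|f|^{p_2}\Bigr)^{1/p_2}\le \Bigl(|B(x,2^{-j})|^{-1}\int_{\Omega\cap B(x,2^{-j})}|f|^{p_1}\Bigr)^{1/p_1}.
\]
I will multiply this by $\varphi_2(2^{-j})\le C^{-1}\varphi_1(2^{-j})$, which holds since $2^{-j}\le 1$, and then take the supremum over $x$ and $j$. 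By \eqref{intr-norm} this yields \eqref{MM-Om}. The point is that only radii $2^{-j}\le 1$ appear in \eqref{intr-norm}, so the comparison of $\varphi_1$ and $\varphi_2$ on $(0,1]$ is enough. This is exactly why the domain version is weaker than \eqref{emb-Mphi-a}.

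For (ii) I will use \eqref{E=Mf}, assuming \eqref{intc} where it is needed, so that $\Mf(\Omega)=\mathcal{E}^0_{\varphi,p,2}(\Omega)$. I will not go through the $\mathcal{E}$ scale, though, and will instead test with explicit functions supported in a small ball $B(x_0,r)\subset\Omega$.

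\textbf{Step 1: $\varphi_1\gtrsim\varphi_2$ on $(0,1]$.} Take $f=\chi_{B(x_0,2^{-k})}$. By \eqref{intr-norm} and the monotonicity of $\varphi_i$ and of $t^{-d/p_i}\varphi_i(t)$ (the $\Gpx{p_i}$ condition), $\|f|\Mf_i(\Omega)\|\sim\varphi_i(2^{-k})$. Here I write $\Mf_i$ for ${\mathcal M}_{\varphi_i,p_i}$. The supremum is attained, up to constants, at the scale $j=k$, because for $j<k$ the quantity $\varphi_i(2^{-j})2^{(j-k)d/p_i}\le\varphi_i(2^{-k})$. The embedding then forces $\varphi_2(2^{-k})\lesssim\varphi_1(2^{-k})$, and doubling extends this to all $t\in(0,1]$.

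\textbf{Step 2: $p_2\le p_1$.} Suppose instead $p_2>p_1$. I will build a counterexample in the spirit of \cite{FHS-MS-2}, Cor.~30: a sum of many characteristic functions of tiny, widely separated cubes $Q_{k,m}$ placed inside a fixed cube $Q_{\nu,0}\subset\Omega$. I will choose heights and numbers so that the $\Mf_1$-norm stays bounded while the averaged $p_2$-norm at scale $2^{-\nu}$ blows up. This is the Lebesgue-type incompatibility $L_{p_1}\not\hookrightarrow L_{p_2}$, localised. The hypothesis $\lim_{t\to0}\varphi_1(t)=0$ is what allows many small bumps, each of height $\varphi_1(2^{-k})^{-1}\to\infty$. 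I expect this to be the \textbf{main obstacle}: the bumps must be balanced so that every intermediate scale is controlled in $\Mf_1$. My first attempt will be to take $N$ cubes of level $k$ at mutual distance comparable to a level $\mu$ with $2^{(\mu-\nu)d}\sim N$, and to check the supremum in \eqref{intr-norm} scale by scale. The restriction $1<p_i$ suggests an alternative route: transfer via \eqref{E=Mf} and the sequence-space result of Theorem~\ref{th-cont}, or via an extension argument to $\rd$ together with \eqref{emb-Mphi-a}. I would fall back on that route if the direct construction becomes messy.

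For (iii) I will use the same normalized bumps $f_k=\varphi_1(2^{-k})^{-1}\chi_{B(x_k,2^{-k})}$, placed at distinct points of $\Omega$ or nested at a single point. These are bounded in $\Ma(\Omega)$. If there is no embedding, there is nothing to prove. If there is one, part (i) and Step 1 give $\varphi_2(2^{-k})\lesssim\varphi_1(2^{-k})$, and I will need a lower bound on the separation in $\Mb(\Omega)$. Simple bumps may have vanishing $\Mb$-norm when $\varphi_2\ll\varphi_1$. So I will instead use oscillating functions: $L_\infty$ unit-ball elements such as Rademacher-type functions $r_k$ on a fixed ball. These are bounded in $\Ma(\Omega)$ by Remark~\ref{rem-hoelder-MO}. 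Moreover $\|r_k-r_l\|$ is bounded below in $L_{p_2}(\Omega)\hookleftarrow\Mb(\Omega)$, and therefore also in $\Mb(\Omega)$. This shows that no subsequence converges, so the embedding is not compact.
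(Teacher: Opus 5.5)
Your (i) is the alternative the paper itself mentions: H\"older's inequality in the intrinsic norm of Lemma~\ref{char-intr}, which involves only radii $2^{-j}\le 1$. Your (iii) is the paper's argument made explicit. The paper observes that compactness of \eqref{MM-Om} would force compactness of $L_\infty(\Omega)\hookrightarrow L_{p_2}(\Omega)$, and your Rademacher functions are the witnesses. The preliminary discussion of bumps in (iii) can be dropped.

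For (ii) you take a genuinely different route. The paper identifies $\mathcal{M}_{\varphi_i,p_i}(\Omega)=\mathcal{E}^0_{\varphi_i,p_i,2}(\Omega)$ via \eqref{E=Mf}. It then places the embedding between $\mathcal{N}^0_{\varphi_1,p_1,\min(p_1,2)}(\Omega)$ and $\mathcal{N}^0_{\varphi_2,p_2,\infty}(\Omega)$ and reads off $\sup_j\alpha_j\varphi_1(2^{-j})^{\varrho-1}<\infty$ from Theorem~\ref{th-cont-BM}. Since $\alpha_j\ge 1$ and $\varphi_1(2^{-j})\to 0$, this forces $\varrho=1$, and boundedness of $\alpha_j$ then gives $\varphi_2\lesssim\varphi_1$. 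You instead test directly with function-space analogues of the extremal sequences behind Theorem~\ref{th-cont}.

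Step~1 is complete as written. Step~2 is still only a plan, but it works once you fix the balancing you leave open:
\begin{itemize}
\item Fix a dyadic cube $P_0\subset\Omega$ of level $\nu$.
\item For $k>\nu$, choose $\mu\in[\nu,k]$ with $2^{(\mu-k)d/p_1}\varphi_1(2^{-\nu})\sim\varphi_1(2^{-k})$. This is possible because $\varphi_1\in\mathcal{G}_{p_1}$.
\item Put one cube $Q_{k,m}$ into each of the $2^{(\mu-\nu)d}$ cubes $Q_{\mu,\ell}\subset P_0$. Let $f_k$ be $\varphi_1(2^{-k})^{-1}$ times the sum of their characteristic functions.
\end{itemize}
A dyadic cube of level $\lambda$ meets at most one bump if $\lambda\ge\mu$, and at most $2^{(\mu-\max(\lambda,\nu))d}$ bumps otherwise. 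Extending $f_k$ by zero, the monotonicity of $\varphi_1$ and of $t^{-d/p_1}\varphi_1(t)$ together with \eqref{Mf-norm} give $\|f_k|\Ma(\Omega)\|\lesssim 1$. Testing the $\Mb$-norm on $P_0$ yields
\[
\|f_k|\Mb(\Omega)\|\gtrsim \varphi_2(2^{-\nu})\,2^{(\mu-k)\frac{d}{p_2}}\,\varphi_1(2^{-k})^{-1}\sim c_\nu\,\varphi_1(2^{-k})^{\frac{p_1}{p_2}-1}\longrightarrow\infty\quad\text{if } p_1<p_2 .
\]

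Neither of your steps uses \eqref{E=Mf}. Your argument therefore needs neither $1<p_i$ nor \eqref{intc}, which the paper's identification tacitly requires. It proves (ii) for all $0<p_i<\infty$, which is exactly the extension the paper only conjectures in the remark following the corollary. Accordingly, drop your opening sentence about invoking \eqref{E=Mf}. The paper's route, for its part, is shorter given the sequence-space machinery already in place, and it extracts both necessary conditions from the single condition $\sup_j\alpha_j\varphi_1(2^{-j})^{\varrho-1}<\infty$.
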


  \begin{proof}
    The sufficiency part (i) is a direct consequence of the corresponding result \eqref{emb-Mphi},  \eqref{emb-Mphi-a} on $\rd$ {(or  Lemma~\ref{char-intr}, using just H\"older's inequality and the assumptions on $\varphi_i$, $i=1,2$)}.

    \ignore{
      \magenta{\bf For (i) isn't it just by restriction? (SM)} In principle, yes, with extensions of $\varphi_i$ to $\real_+$, but I wanted to include the cases $p_i<1$, at least in part (i), and it seems that so far we only have the generalised Morrey space embedding for $p_i\geq 1$ in \cite{GHLM17}. (DDH) 
  }

%  \open{We may revise the argument a little once the result for the embedding $\MFa(\Omega)\hookrightarrow \MFb(\Omega)$ is formulated above.}
  
    As for (ii), we benefit from the counterpart of \eqref{E=Mf} for spaces on domains $\Omega$ (defined by restriction). {Then, as in Remark~\ref{Rmk5.8},  
   $$ 
   \left\{ \alpha_j \varphi_1(2^{-j})^{\varrho -1}\right\}_{j\in \no}  \in \ell_{\infty}.       
    $$}%
 % together with Theorem~\ref{th-cont-BM} and elementary embeddings.    
%This can be seen as follows. By elementary embeddings,
%    \[
%      \mathcal{N}^0_{\varphi_1, p_1,\min(p_1,2)}(\Omega) \hra \mathcal{E}^0_{\varphi_1,p_1,2}(\Omega)=\Ma(\Omega)\hra \Mb(\Omega)=\mathcal{E}^0_{\varphi_2,p_2,2}(\Omega) \hra \mathcal{N}^0_{\varphi_2, p_2,\infty}(\Omega),
%      \]
%      which results in 
%      \[
%        \sup_{j\in\no} \alpha_j \varphi_1(2^{-j})^{\varrho-1} < \infty,
%        \]
%in view of \eqref{cond2O}, note that $s_1=s_2=0$ and $q^\ast=\infty$. 
Since $\alpha_j\geq 1$, $j\in\no$, and $\varphi_1(t)\to 0$ for $t\to 0$, this implies $\varrho=1$, that is, $p_1\geq p_2$. Thus $\{\alpha_j\}_{j\in\no} \in\ell_\infty$ which leads to $\varphi_1(t)\geq C \ \varphi_2(t)$, $0<t\leq 1$.
Finally, concerning (iii), note that the compactness of \eqref{MM-Om} would imply the compactness of $L_\infty(\Omega)\hra L_{p_2}(\Omega)$ which is not true.
  \end{proof}

  \remark{\label{emb-p>1}
    If we restrict ourselves to $p_i>1$, $i=1,2$, then Corollary~\ref{M-M} can be written as
    \[
  \Ma(\Omega) \hookrightarrow \Mb(\Omega) \quad\text{if, and only if,}\quad p_1\geq p_2\quad\text{and}\ \varphi_1(t)\geq c\ \varphi_2(t), \ 0<t\leq 1,
  \]
that is, the complete counterpart of \cite[Section~12.1.2, Cor. 30, p.32]{FHS-MS-2}, as recalled in \eqref{emb-Mphi}, \eqref{emb-Mphi-a}. 
  We suppose this characterisation to be true for $p_i\in (0,1]$, $i=1,2$, too, but then our above argument (in (ii)) does not work any longer.\\
  If $\varphi_i(t)\sim t^{\nd/u_i}$, $0<p_i\leq u_i<\infty$, $i=1,2$, then we dealt in detail with such embeddings as above in our paper \cite{HSS-morrey} where a special focus was laid on the quality of the underlying domain in connection with the parameters $u_i$, $i=1,2$.
  }
  
  \begin{corollary}
    Let $\Omega\subset\rd$ be a bounded {$C^{\infty}$}  domain. 
Let $1<p,r<\infty$ and $\varphi\in \Gp$, with $\varphi(1)=1$. Assume that $\lim_{t\to 0} \varphi(t)=0$.
\begin{enumerate}[{\bfseries\upshape(i)}]
\item
Then 
  \begin{equation}
  \M(\Omega) \hookrightarrow L_r(\Omega) \quad\text{if, and only if,}\quad p\geq r.
  \end{equation}
\item
Then 
  \begin{equation}
L_r(\Omega)   \hookrightarrow \M(\Omega) \quad\text{if, and only if,}\quad p\leq r \quad \text{and}\quad \varphi(t)\leq c\ t^{\frac{d}{r}}, \ 0<t\leq 1.
  \end{equation}
\item
Neither the embedding  $\M(\Omega) \hookrightarrow L_r(\Omega)$  nor $L_r(\Omega)   \hookrightarrow \M(\Omega)$ is ever compact in this situation.
  \end{enumerate}
  \end{corollary}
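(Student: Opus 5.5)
We treat $L_r(\Omega)$ as the special case $\mathcal{M}_{\psi,r}(\Omega)$ with $\psi(t)=t^{d/r}$, $\psi\in\Gpx{r}$, $\psi(1)=1$, $\lim_{t\to0}\psi(t)=0$. Then each part reduces to Corollary~\ref{M-M}, applied with $(\varphi_1,p_1)=(\varphi,p)$, $(\varphi_2,p_2)=(\psi,r)$ for the first embedding and with the roles reversed for the second. The hypothesis $\lim_{t\to 0}\varphi_1(t)=0$ of Corollary~\ref{M-M} holds in both directions: either by the assumption on $\varphi$, or because $\psi(t)=t^{d/r}\to0$. Since $1<p,r<\infty$, part (ii) of that corollary is available in both directions, so Remark~\ref{emb-p>1} gives a full characterisation in each case.

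For (i) we apply Remark~\ref{emb-p>1} to $\M(\Omega)\hookrightarrow \mathcal{M}_{\psi,r}(\Omega)$. It holds if, and only if, $p\geq r$ and $\varphi(t)\geq c\,t^{d/r}$ on $(0,1]$. We show the second condition is automatic once $p\ge r$. From $\varphi\in\Gp$ and $\varphi(1)=1$, the $\Gp$-condition \eqref{Gp-def} with $s=1$ gives $t^{-d/p}\varphi(t)\geq 1$, so $\varphi(t)\geq t^{d/p}\geq t^{d/r}$ for $0<t\le 1$ when $p\geq r$. The characterisation therefore reduces to $p\geq r$. Sufficiency could also be seen directly from Remark~\ref{rem-hoelder-MO}, since $\M(\Omega)\hookrightarrow L_p(\Omega)\hookrightarrow L_r(\Omega)$ on a bounded domain.

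For (ii) we apply the same characterisation to $\mathcal{M}_{\psi,r}(\Omega)\hookrightarrow\M(\Omega)$. It holds if, and only if, $r\ge p$ and $t^{d/r}\geq c\,\varphi(t)$ on $(0,1]$, which is exactly the stated condition. For (iii) we invoke Corollary~\ref{M-M}(iii) in both directions, which is legitimate because in each case the source function tends to $0$ at $0$. The underlying reason is that compactness would force $L_\infty(\Omega)\hookrightarrow L_{p_2}(\Omega)$ to be compact, which it is not. If one prefers to avoid Corollary~\ref{M-M}(iii), a direct argument also works: take Rademacher-type functions $f_k=\mathrm{sign}\sin(2^k x_1)$ restricted to $\Omega$. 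They are bounded in $L_\infty(\Omega)$, hence in both source spaces, but have no convergent subsequence in $L_{\min(p,r)}(\Omega)$, hence none in either target.

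The main obstacle I expect is the identification $\mathcal{M}_{\psi,r}(\Omega)=L_r(\Omega)$ with equivalent quasi-norms for the restriction-defined space. On $\rd$ we have $\mathcal{M}_{\psi,r}(\rd)=L_r(\rd)$ by Remark~\ref{rmk1}. On the domain, Lemma~\ref{char-intr} shows the quantity is $\sup_{x,j}\|f\mid L_r(\Omega\cap B(x,2^{-j}))\|$ up to constants, which equals $\|f\mid L_r(\Omega)\|$ because the $j=0$ balls cover the bounded set $\Omega$ up to finitely many pieces. With this identification, parts (i)–(iii) follow as described.
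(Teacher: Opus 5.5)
Your proposal is correct and follows the paper's own argument. You identify $L_r(\Omega)$ with $\mathcal{M}_{\psi,r}(\Omega)$, $\psi(t)=t^{d/r}$, apply Corollary~\ref{M-M} and Remark~\ref{emb-p>1} in both directions, and use the $\mathcal{G}_p$-bound $\varphi(t)\geq t^{d/p}$ on $(0,1]$ to see that $p\geq r$ already forces $\varphi(t)\geq t^{d/r}$. The identification you flag as the main obstacle is in fact immediate: extension by zero shows that the restriction of $\mathcal{M}_{\psi,r}(\mathbb{R}^d)=L_r(\mathbb{R}^d)$ to $\Omega$ is $L_r(\Omega)$.
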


This follows immediately from Corollary~\ref{M-M} and Remark~\ref{emb-p>1}, together with $\M(\Omega)=L_r(\Omega)$ for $p=r$ and $\varphi(t)\sim t^{d/r}$, $0<t\leq 1$. Note that in part (i) the first condition $p\geq r$ already implies the second one, $ \varphi(t)\geq c\ t^{\frac{d}{r}}$, $0<t\leq 1$, since $\varphi\in\Gp$.  So the above result can also be seen as some slightly stronger result than the embeddings mentioned in Remark~\ref{rem-hoelder-MO} above. In \cite{HSS-morrey} we dealt with the spaces $\M(\Omega)=\Mu(\Omega)$, $\varphi(t)\sim t^{d/u}$, $0<p\leq u<\infty$, only, where the counterpart reads as the optimal chain of embeddings $L_u(\Omega)\hookrightarrow \Mu(\Omega) \hookrightarrow L_p(\Omega)$. For the corresponding situation on $\rd$, cf. \cite[Remark~2.5]{hms24} and \cite[Remarks~5.16 and 5.21]{HLMS24}.

  \begin{example}
    Recall that the choice $\varphi(t)=t^{-\sigma} \chi_{(0,1)}(t)$, where  $-\frac{\nd}{p}\leq \sigma <0$, implies that $\M(\rd)$ coincides with the local Morrey spaces $\mathcal{L}^{\sigma}_p(\rd)$ introduced by Triebel, cf. \cite[Section~1.3.4]{Tri13}. For convenience, let us assume that $1<p_i<\infty$, $-\frac{\nd}{p_i}< \sigma_i <0$, $i=1,2$, $|\Omega|=1$, and the spaces $\mathcal{L}^{\sigma}_p(\Omega)$ be defined by restriction. Then Corollary~\ref{M-M}  leads to
    \[
      \mathcal{L}^{\sigma_1}_{p_1}(\Omega) \hookrightarrow \mathcal{L}^{\sigma_2}_{p_2}(\Omega) \quad\text{if, and only if,}\quad p_1\geq p_2\quad\text{and}\quad \sigma_1\geq \sigma_2 .
      \]
  \end{example}

%%%%%%%%%%%%%%%%%%%%%%%%%%%%%%%%%%%%
%%%%%%%%%%%%%%%%%%%%%%%%%%%%%%%%%%%%
\bibliographystyle{alpha}	

\begin{thebibliography}{NTS16}
	
	\bibitem{AGNS}
	A.~Akbulut, V.S.~Guliyev, T.~Noi, and Y.~Sawano, 
	Generalized Morrey spaces-revisited, 
	{\em Z. Anal. Anwend. } 36 (2017), 17--35. 
	
	%\bibitem{Al}
	%A.~Almeida,
	%\newblock Wavelet bases in generalized Besov spaces,
	%\newblock {\em J. Math. Anal. Appl.}  304 (2005) 198--211.


\bibitem{ElBaraka1}
A.~El Baraka.
\newblock An embedding theorem for {C}ampanato spaces.
\newblock {\em Electron. J. Differential Equations}, pages No. 66, 17 pp.
  (electronic), 2002.

\bibitem{ElBaraka2}
A.~El Baraka.
\newblock Function spaces of {BMO} and {C}ampanato type.
\newblock In {\em Proceedings of the 2002 {F}ez {C}onference on {P}artial
  {D}ifferential {E}quations}, volume~9 of {\em Electron. J. Differ. Equ.
  Conf.}, pages 109--115 (electronic). Southwest Texas State Univ., San Marcos,
  TX, 2002.

\bibitem{ElBaraka3}
A.~El Baraka.
\newblock Littlewood-{P}aley characterization for {C}ampanato spaces.
\newblock {\em J. Funct. Spaces Appl.}, 4(2):193--220, 2006.
        
%	\bibitem{bauer}
%	M.~Bauer,
%	Verallgemeinerte Morrey-R\"aume. Grundlagen, Eigenschaften und Beispiele (German),
%	Master's thesis, Friedrich Schiller University Jena, Germany, 2021.
	
%	\bibitem{BS}
%	C.~Bennett and R.~Sharpley,
%	Interpolation of operators,
%	Academic Press, Boston, 1988.
	
%	\bibitem{BGT}
%	N.H. Bingham, C.M., Goldie, J.L. Teugels, Regular Variation. Encyclopedia of Mathematics and its
% Applications. Cambridge University Press, Cambridge, 1989.

  \bibitem{CaH-Tri}
A.M. Caetano and D.D. Haroske.
\newblock An interview with {Hans Triebel}.
\newblock CIM Bulletin, No. 23, p. 13-17, December 2007.
\newblock Universidade de Coimbra, Portugal. {\em Reprinted in:} EMS
  Newsletter, December 2008, p. 37-40, EMS Publishing House, Z\"urich; {\em
  Chinese transl. in:} Mathematical Advance in Translation, 2010, vol. 29, no.
  1, pp. 45-49.
	
	%\bibitem{Dau2}
	%I.~Daubechies,
	%\newblock{\em Ten Lectures on Wavelets},
	%\newblock  {CBMS-NSF Regional Conference Series in Applied Mathematics}, vol.61, 
	%\newblock{SIAM, Philadelphia, 1992. }
	
	%\bibitem{CF06}
	%A.M. Caetano and W.~Farkas,
	%Local growth envelopes of {B}esov spaces of generalized smoothness, 
	%{\em Z. Anal. Anwendungen} 25 (2006), 265--298.
	
	
%	\bibitem{CH15}
%	A.M. Caetano and D.D.~Haroske,
%	{\em Embeddings of Besov spaces on fractal h-sets},
%	{\em Banach J. Math. Anal.} 9 (2015), no. 4, 259--295.
	
	
	%\bibitem{CM04}
	%A.M.~Caetano and S.D.~Moura,
	% {\em Local growth envelopes of spaces of generalized smoothness: the
		%  critical case},
	% {\em Math. Ineq. \& Appl.} 7 (2004), no.~4, 573--606. 
	
\bibitem{ET96}
D.E. Edmunds and H. Triebel, H,
{\em Function spaces, entropy numbers, differential operators},
Cambridge Tracts in Mathematics 120, Cambridge University Press, Cambridge
1996.	
	
	% \bibitem{FaLe}
	%W. Farkas and H.-G.~Leopold,
	%\newblock Characterisation of function spaces of generalised smoothness,
	%\newblock {\em Ann. Mat. Pura Appl.} 185(1) (2006) 1--62.


	\bibitem{FHS} 
	G.~Di Fazio, D.I.~Hakim,  and Y.~Sawano, 
	Elliptic equations with discontinuous	coefficients in generalized Morrey spaces,
	{\em Eur. J. Math.}  3 (2017),  728--762.
	
	
	\bibitem{FP}
	L.C.F.~Ferreira and M.~Postigo, 
	Global well-posedness and asymptotic behavior in Besov-Morrey spaces for chemotaxis-Navier-Stokes fluids,
	{\em  J. Math. Phys.}  60 (2019), no. 6, 061502, 19 pp. 


        
	%\bibitem{Gu}
	%V.S.~Guliyev, 
	%\newblock Integral operators on function spaces on the homogeneous groups
	%and on domains in Rn (Russian). 
	%\newblock Doctor degree dissertation, Mat. Inst. Steklov,  Moscow (1994)
\bibitem{GHS-21}
H.F. Gon\c{c}alves, D.D. Haroske, and L.~Skrzypczak.
\newblock {Compact embeddings of Besov-type and Triebel-Lizorkin-type spaces on bounded domains}.
%\newblock {\em submitted}; arXiv:2001.02046, 2020.
\newblock {\em Rev. Mat. Complut.}, 34:761--795, 2021.

        \bibitem{GHS-23}
H.F. Gon\c{c}alves, D.D. Haroske, and L.~Skrzypczak.
\newblock {Limiting embeddings of Besov-type and Triebel-Lizorkin-type spaces
  on bounded domains, and extension operator}.
\newblock {\em Ann. Mat. Pura Appl. (4)}, 202:2481--2516, 2023.
	
%	\bibitem{Graf}
%	L.~Grafakos, 
%	Classical Fourier Analysis, Second edition.
%	Springer, 2008.
	
	
	\bibitem{GHLM17}
	H.~Gunawan, D.I.~Hakim, K.M.~Limanta, and A.A.~Masta,
	Inclusion properties of generalized Morrey spaces,
	{\em Math. Nachr.} 290 (2017), no. 2-3, 332--340. 
	
	
%	\bibitem{hardy}
%	G.H. Hardy, J.E. Littlewood, and G.~P{\'o}lya, \emph{Inequalities}, 2nd ed.,
%	Cambridge Univ. Press, Cambridge, 1952.
	
	
%	\bibitem{HaHabil}
%	D.D.~Haroske,
%	Limiting Embeddings, Entropy Numbers and Envelopes in Function Spaces, 
%	Habi\-li\-ta\-tions\-schrift, Friedrich-Schiller-Universit\"at Jena, Germany, 2002.
	
	
%	\bibitem{Ha-crc}
%	D.D.~Haroske,
%	Envelopes and Sharp Embeddings of Function Spaces, volume 437  of Chapman \& Hall/CRC Research Notes in Mathematics,
%	Chapman \& Hall/CRC, Boca Raton, FL, 2007.
	
	\bibitem{HLMS2023}
	D.D. Haroske, H.-G. Leopold, S.D. Moura, and L.~Skrzypczak,
	Nuclear and Compact Embeddings in Function Spaces of Generalised Smoothness,
	{\em Anal. Math.} 49 (2023), 1007--1039.
	
	\bibitem{HL23} 
	D.D. Haroske and Z. Liu, 
	Generalized Besov-type and Triebel-Lizorkin-type spaces, 
	{\em Studia Math.}  273 (2023), no. 2, 161--199. 
	
	\bibitem{HLMS24}
D.D. Haroske, Z. Liu, S.D. Moura, and L. Skrzypczak,
Embeddings of generalised Morrey smoothness spaces,
 Acta. Math. Sin.-English Ser. 41 (2025), 413--456 .
	
%	\bibitem{HaSM3}
%	D.D.~Haroske and S.D.~Moura,
%	Some specific unboundedness property in Smoothness Morrey Spaces, 
%	The non-existence of growth envelopes in the subcritical case, 
%	{\em Acta Math. Sin. (Engl. Ser.)}, 32 (2016), 137--152.
	
	\bibitem{HMSS-morrey}
	D.D. Haroske, S.D. Moura, C.~Schneider, and L.~Skrzypczak,
	{Unboundedness properties of Smoothness Morrey spaces of regular
		distributions on domains},
	{\em Sci. China Math.}, 60  (2017), 2349--2376.
	
%	\bibitem{hms16}
%	D.D. Haroske, S.D. Moura, and L.~Skrzypczak,
%	{Smoothness Morrey Spaces of regular distributions, and some unboundedness property},
%	{\em Nonlinear Anal.} 139 (2016), 218--244.
	
%	\bibitem{hms20}
%	D.D. Haroske, S.D. Moura, and L.~Skrzypczak,
%	{Some Embeddings of Morrey Spaces with Critical Smoothness},
%	{\em  J. Fourier Anal. Appl.} 26 (2020), 50.

 
	\bibitem{hms22}
	D.D. Haroske, S. Moura,  and L. Skrzypczak,
	Wavelet decomposition and  embeddings of generalised Besov-Morrey  spaces,
	{\em Nonlinear Anal.} 214 (2022), 112590.
	
	\bibitem{hms24}
	D.D. Haroske, S. Moura,  and L. Skrzypczak,
	On a bridge connecting Lebesgue and Morrey spaces in view of their growth properties,
	{\em Anal. Appl.}    22 (2024), no. 4,  751--790.
	
	
	\bibitem{HSS-morrey}
	D.D. Haroske, C.~Schneider, and L.~Skrzypczak,
	{Morrey spaces on domains: Different approaches and growth  envelopes},
	{\em J. Geom. Anal.}  28 (2018),  817--841.
	
	%\bibitem{HST}
	%	D.D.~Haroske, P.~Skandera, H. Triebel, 
	%	\newblock{An approach to wavelet isomorphisms of function spaces via atomic representations,}
	%	\newblock{{\em J. Fourier Anal. Appl. }24 (2018), 830-871}. 
	
%	\bibitem{hs12}
%	D.D. Haroske and L. Skrzypczak,
%	Continuous embeddings of Besov-Morrey function spaces,
%	{\em Acta Math. Sin. (Engl. Ser.) } 28 (2012), 1307--1328.
	
	\bibitem{hs13}
	D.D. Haroske and  L. Skrzypczak,  
	Embeddings of Besov-Morrey spaces on	bounded domains,
	{\em Studia Math.} 218 (2013), 119--144.
	
	
	\bibitem{hs14}
	D.D.~Haroske and L. Skrzypczak, 
	\newblock On Sobolev and Franke-Jawerth embeddings of smoothness Morrey spaces,
	\newblock {\em Rev. Mat. Complut.}  27 (2014), no. 2,  541--573. 
	
	\bibitem{hs20}
D.D. Haroske and L.~Skrzypczak.
\newblock {Entropy numbers of compact embeddings of Smoothness Morrey spaces on
  bounded domains}.
\newblock {\em J. Approx. Theory}, 256:105424, 2020.

\bibitem{hs24}
D.D. Haroske and L.~Skrzypczak.
\newblock {Nuclear embeddings of Morrey sequence spaces and smoothness Morrey
  spaces}.
\newblock {\em Bull. Malays. Math. Sci. Soc.}, 47:111, 2024.

%\bibitem{ht1994a}
%D.D. Haroske and  H. Triebel,  
%Entropy numbers in weighted function spaces and eigenvalue
%distributions of some degenerate pseudodifferential operators. {I}
%\newblock {\em  Math. Nachr.} 167 (1994),  131--156.


%\bibitem{ht1994b}
%D.D. Haroske and  H. Triebel,  
%Entropy numbers in weighted function spaces and eigenvalue
%distributions of some degenerate pseudodifferential operators. {II}
%\newblock {\em  Math. Nachr.} 168 (1994),  109--137.



\bibitem{ht2021}
D.D. Haroske and  H. Triebel,  
Morrey-smoothness spaces: A new approach,
{\em Sci. China Math.}  66 (2023), 1301--1358.
	
	
	%	\bibitem{HW96}
	%	\newblock{E.~Hern{\'a}ndez, G.~Weiss,} 
	%	\newblock{\em A First Course on Wavelets},
	%	\newblock{CRC Press, Boca Raton,  1996.}
	
	\bibitem{IN19}  
	M.~Izuki and  T.~Noi,
	Generalized Besov-Morrey spaces and generalized Triebel-Lizorkin-Morrey spaces on domains, 
	{\em Math. Nachr.}  292 (2019), 2212--2251.
	
	
%	\bibitem{JSS2019}  
%	J. JimÃ©nez-Garrido, J. Sanz, and G. Schindl,  
%	Indices of O-regular variation for weight functions and weight sequences, 
%	{\em Rev. R. Acad. Cienc. Exactas Fis. Nat. Ser. A Mat.}  113 (2019), no.4, 3659--3697.
	
	
	
	\bibitem{KMR}
	V.~Kokilashvili, A.~Meskhi, and  H.~Rafeiro, 
	Estimates for nondivergence elliptic equations with VMO coefficients in generalized grand Morrey spaces, 
	{\em Complex Var. Elliptic Equ.} 59 (2014), no. 8, 1169--1184.
	
	
	\bibitem{KY}
	H.~Kozono and M.~Yamazaki,
	Semilinear heat equations and the {N}avier-{S}tokes equation with distributions in new function spaces as initial data,
	{\em Comm. Partial Differential Equations} 19 (1994), 959--1014.
	
	
	\bibitem{KNS}
	K.~Kurata, S.~Nishigaki, and S.~Sugano, 
	Boundedness of integral operators on generalized Morrey spaces and its application to Schr\"odinger operators, 
	{\em Proc. Amer. Math. Soc.} 128 (2000), 1125--1134.
	
	%
	%\bibitem{Landau}
	%E.~Landau, 
	%\"Uber einen Konvergenzsatz, {\em  G\"ottingen Nachr.} (1907), 25-27.  
	%
	
	%	\bibitem{LSUYY13} 
	%	Y.~Liang, Y. Sawano, T. Ullrich, D.~Yang,  W.~Yuan, 
	%	A new framework for generalized Besov-type and Triebel-Lizorkin-type spaces. {\em Dissertationes Math.} 489 (2013), 114 pp.
	%
	
	\bibitem{Maz03}
	A.L.~Mazzucato,
	Besov-{M}orrey spaces: function space theory and applications to non-linear {PDE},
	{\em Trans. Amer. Math. Soc. } 355 (2003), 1297--1364. 
	
	
	\bibitem{mi}
	T. Mizuhara, 
	Boundedness of some classical operators on generalized Morrey spaces,
	Harmonic analysis (Sendai, 1990), 183-189, ICM-90 Satell. Conf. Proc., pringer, Tokyo, 1991.
	
	\bibitem{Nak00}
	E.~Nakai, 
	\newblock A characterization of pointwise multipliers on the Morrey spaces,
	\newblock {\em Sci. Math.} 3 (2000),  no. 3, 445--454.
	
	\bibitem{Nak94}
	E.~Nakai, 
	Hardy-Littlewood maximal operator, singular integral operators and the Riesz potentials on generalized Morrey spaces,
	{\em Math. Nachr.} 166 (1994),  95--103.
	
	
	\bibitem{NNS16}
	S.~Nakamura, T.~Noi, and Y. Sawano,
	Generalized Morrey spaces and trace operator,
	{\em Sci. China Math.} 59 (2016), no. 2, 281--336.
	
	
	
	\bibitem{piccinini-1}
	L.C. Piccinini,
	{Inclusioni tra spazi di Morrey},
	{\em Boll. Un. Mat. It. (4)}, 2 (1969), 95--99.
	
	%\bibitem{pietsch}
	%  A.~Pietsch, 
	% History of Banach Spaces and Linear Operators, Birkh\"auser Verlag 2007 . 	
	
	\bibitem{rosenthal}
	M.~Rosenthal,
	{Morrey-R\"aume aus der Sicht der harmonischen Analysis},
	Master's thesis, Friedrich-Schiller-Universit\"at Jena, Germany,	2009.
	
	%	\bibitem{MR-1}
	%	M.~Rosenthal,
	%	\newblock {Local means, wavelet bases, representations, and isomorphisms in Besov-Morrey and Triebel-Lizorkin-Morrey spaces.}
	%	\newblock {\em Math. Nachr.} 286 (2013), no. 1, 59--87.
	
	
%	\bibitem{RS}
%	T.~Runst and  W.~Sickel,
%	Sobolev Spaces of fractional order, Nemytskij Operators, and Nonlinear Partial Differential Equations,
%	Walter de Gruyter, Berlin, New York, 1996.
	
	
\bibitem{saw08} Y. Sawano, Wavelet characterizations of Besov-Morrey and Triebel-Lizorkin-Morrey spaces, Funct. Approx. Comment. Math. 38 (2008), 93--107.
	
	
	\bibitem{Saw18}
	Y. Sawano, 
	A thought on generalized Morrey spaces,
	{\em   J. Indones. Math. Soc.} 25 (2019), no. 3, 210--281.
	
	
	\bibitem{FHS-MS-1}
	Y.~Sawano, G.~Di~Fazio, and D.I. Hakim,
	{\em {M}orrey spaces. {Introduction and Applications to Integral
			Operators and PDE's}. Vol. {I}}.
	Monographs and Research Notes in Mathematics. Chapman \& Hall CRC
	Press, Boca Raton, FL, 2020.
	
	
	\bibitem{FHS-MS-2}
	Y.~Sawano, G.~Di~Fazio, and D.I. Hakim,
	{\em {M}orrey spaces. {Introduction and Applications to Integral
			Operators and PDE's}. Vol. {II}}.
	Monographs and Research Notes in Mathematics. Chapman \& Hall CRC
	Press, Boca Raton, FL, 2020.
	

\bibitem{SHG-15} Y. Sawano, D.I. Hakim and H. Gunawan, Non-smooth atomic decomposition for generalized
              {O}rlicz-{M}orrey spaces, Math. Nachr. {\bfseries 288}, no. 14-15, 1741--1775 (2015)
  
        
%	\bibitem{SeeT}
%	A.~Seeger and W. Trebels, 
%	\newblock Low regularity classes and entropy numbers
%	\newblock {\em Arch. Math.} 92 (2009), 147--157.
	
	
	\bibitem{s011}
	W.~Sickel, 
	\newblock Smoothness spaces related to Morrey spaces -- a survey. I, 
	\newblock {\em Eurasian Math. J.} 3 (2012), 110--149.
	
	
	\bibitem{s011a}
	W.~Sickel, 
	\newblock Smoothness spaces related to Morrey spaces -- a survey. II, 
	\newblock {\em Eurasian Math.} J. 4 (2013), 82--124.
	
	
%	\bibitem{ST}
%	W.~Sickel, H.~Triebel,
%	\newblock  H\"older inequalities and sharp embeddings in function spaces of {$B^s_{p,q}$} and {$F^s_{p,q}$} type, 
%	\newblock {\em  Z. Anal. Anwendungen } 14 (1995), 105--140.
	
	\bibitem{TX}
L.~Tang and J.~Xu.
\newblock Some properties of {M}orrey type {B}esov-{T}riebel spaces.
\newblock {\em Math. Nachr.}, 278(7-8):904--917, 2005.

		
	\bibitem{T83}
	H.~Triebel,
	Theory of function spaces,
	Birkh\"auser, Basel, 1983.


\bibitem{T92} H. Triebel,
Theory of function spaces II,
Birkh\"auser, Basel, 1992.


%\bibitem{T93}
%H.~Triebel.
%\newblock Approximation numbers and entropy numbers of embeddings of fractional
%  {B}esov--{S}obolev spaces in {O}rlicz spaces.
%\newblock {\em Proc. London Math. Soc. (3)}, 66:589--618, 1993.

%\bibitem{T-func}
%	H.~Triebel,
%	The Structure of Functions,
%	Birkh\"auser, Basel, 2001.

\bibitem{T97}
H. Triebel,
\newblock \textit{Fractals and Spectra: Related to Fourier Analysis and Function Spaces}.
\newblock Birkh\"auser, Basel, 1997.
	
\bibitem{T06} H. Triebel, Theory of Function Spaces, III,
  Birkh\"auser, Basel, 2006.

  \bibitem{Tri08} 
	H.~Triebel,
	\newblock{\em  Function Spaces and Wavelets on Domains}, EMS Tracts in Mathematics 7, 
	\newblock European Mathematical Society, 2008.
	
	\bibitem{Tri11}
	H.~Triebel, 
	\newblock Morrey-Campanato spaces and their smooth relatives,
	\newblock  unpublished notes, 2011.

\bibitem{Tri13} 
	H.~Triebel,
	\newblock{\em Local Function Spaces, Heat and Navier-Stokes Equations}, EMS Tracts in Mathematics 20, 
	\newblock European Mathematical Society, 2013.
	%
	
      \bibitem{Tri14}
	H.~Triebel,
	\newblock{\em Hybrid function spaces, heat and Navier-Stokes equations}, EMS Tracts in Mathematics 24, 
	\newblock European Mathematical Society, 2014.

         \bibitem{Tri17}
	H.~Triebel,
	\newblock{\em PDE Models for Chemotaxis and Hydrodynamics in Supercritical Function Spaces}, EMS Series of Lectures in Mathematics, EMS Publishing House, Z\"urich, 2017.
        
 
\bibitem{T20} H. Triebel, Theory of function spaces, IV. Birkh\"{a}user, Basel, 2020.

        
	
	\bibitem{WNTZ}
	N.~Wei, P.C.~Niu, S.~Tang, M.~Zhu, 
	\newblock Estimates in generalized Morrey spaces for nondivergence degenerate elliptic operators with discontinuous coefficients,
	\newblock {\em Rev. R. Acad. Cienc. Exactas Fis. Nat. Ser. A Math. RACSAM} 106 (2012),  1--33.
	
	%	\bibitem {Woj} 
	%	{P.~Wojtaszczyk,}
	%	\newblock {\em  A Mathematical Introduction to Wavelets,}
	%	\newblock{Cambridge Univ. Press, Cambridge, 1997}.
	


\bibitem{YZY-BJMA15}
D. Yang, C. Zhuo, and W. Yuan,
{Triebel-{L}izorkin type spaces with variable exponents}, Banach
  J. Math. Anal. \textbf{9}, no.~4, 146--202 (2015)


        
	\bibitem{YFS}
	M.~Yang, Z.~Fu, and  J.~Sun,
	\newblock Existence and large time behaviour to coupled chemotaxis-fluid equations in Besov-Morrey spaces,
	\newblock {\em J. Differential Equations} 266 (2019), no. 9,  5867--5894.
	
	
	\bibitem{YSY10}
	W.~Yuan, W.~Sickel, and D.~Yang,
	\newblock {\em Morrey and Campanato meet Besov, Lizorkin and Triebel},
	\newblock Lecture Notes in Mathematics 2005, Springer-Verlag, Berlin, 2010.
	
	
	\bibitem{ZJSZ}
	L.~Zhang, Y.~Jiang, Y.~Sheng, J.~Zhou, 
	\newblock Parabolic equations with VMO coefficients in generalized Morrey spaces, 
	\newblock {\em Acta Math. Sin.} 26 (2010), no. 1, 117--130.
	
\end{thebibliography}
\def\cprime{$'$}

%%%%%%%%%%%%%%%%%%%%%%%%%%%%%%%%%%%%%%%%%%%%%%%%%%%%%%%%%%%%%%%%%%%%%%%%%%
\bigskip~

{\small
	\noindent%\begin{minipage}[t]{0.3\textwidth}
	Dorothee D. Haroske\\
	Institute of Mathematics \\
	Friedrich Schiller University Jena\\
	07737 Jena\\
	Germany\\
	%\vfill
	{\tt dorothee.haroske@uni-jena.de}\\[4ex]
	\noindent%\begin{minipage}[t]{0.3\textwidth}
	Susana D. Moura\\
	University of Coimbra\\
	CMUC, Department of Mathematics\\
	EC Santa Cruz\\
	3001-501 Coimbra\\
	Portugal\\
	{\tt smpsd@mat.uc.pt}\\[4ex]
	% 
	%\end{minipage}\hfill\begin{minipage}[t]{0.3\textwidth}
	Leszek Skrzypczak\\
	Faculty of Mathematics \& Computer Science\\
	Adam  Mickiewicz University\\
	ul. Uniwersytetu Pozna\'nskiego 4\\
	61-614 Pozna\'n\\
	Poland\\
	{\tt lskrzyp@amu.edu.pl}
}

\end{document}